\theoremstyle{plain}
\newtheorem{assumption}{Assumption}
\newtheorem*{assumption0}{Assumption 0}
\newtheorem{theorem}{Theorem}[section]
\newtheorem{definition}[theorem]{Definition}
\newtheorem{corollary}[theorem]{Corollary}
\newtheorem{lemma}[theorem]{Lemma}
\newtheorem{proposition}[theorem]{Proposition}
\newtheorem{example}[theorem]{Example}
\newtheorem{preremark}[theorem]{Remark}
\newenvironment{remark}{\begin{preremark}\normalfont}{\end{preremark}}
\renewcommand{\l}{\mathcal L}
\renewcommand{\r}{\mathcal R}
\newcommand{\e}{\mathcal E}
\newcommand{\F}{\mathcal F}
\newcommand{\R}{\mathbb R}
\keywords{Parabolic Harnack inequality, Dirichlet space, non-symmetric forms, local weak solutions, heat kernel estimates}
\subjclass[2010]{35K05, 31C25, 60J60, 35D30}%
\title[Harnack inequalities for non-symmetric forms]{Parabolic Harnack inequality for time-dependent non-symmetric Dirichlet forms}
\author{Janna Lierl \and Laurent Saloff-Coste}{ \thanks{Both authors partially supported by NSF Grant DMS 1004771}}
\begin{document}

\begin{abstract}
In the context of a metric measure Dirichlet space satisfying volume doubling and Poincar\'e inequality, we prove the parabolic Harnack inequality for weak solutions of the heat equation associated with local nonsymmetric bilinear forms. In particular, we show that these weak solutions are locally bounded. 
\end{abstract}

\maketitle
\tableofcontents

\section*{Introduction}
This paper is concerned with the parabolic Harnack inequality in the context of an abstract Dirichlet space satisfying volume doubling and Poincar\'e inequality provided that the metric is induced by the Dirichlet form.

As observed by Moser \cite{Moser64, Moser67, Moser71}, the parabolic Harnack inequality implies that weak
solutions of the heat equation are locally bounded and H\"older continuous.
Further, Nash \cite{Nash58} and later Aronson \cite{Aro67} developed heat kernel estimates and other related results. See also \cite{AS67, PE84, FS86, Str88, NS91}.

On Euclidean space, Aronson and Serrin \cite{AS67} developed the theory of parabolic Harnack 
inequalities for quasi-linear divergence form equations having the proper 
structure. This includes time-dependent linear equations in divergence form 
with uniformly elliptic second order term, 
first and zero order terms with bounded coefficients, that is,
\begin{eqnarray} \label{uel}
\lefteqn{\partial _t u(t,x) = \sum_{i,j}\partial_j (a_{i,j}(t,x) \partial_i u(t,x))}&&
\nonumber \\
&+&\sum_ib_i(t,x) \partial_i u(t,x)+\sum_j \partial_j(d_j(t,x)u(t,x)) +c(t,x)u(t,x),\label{dfue}
\end{eqnarray}
to be interpreted in the weak sense and where  $a_{i,j},b_i,d_j$ and  $c$ 
are bounded measurable functions with, $ \forall\,\xi\in \mathbb R^n$,
$\sum _{i,j}a_{i,j}(t,x)\xi_i\xi_j\ge \epsilon |\xi|^2$, $\epsilon>0$.
In particular, if the lower order coefficients $b_i,d_i$ and $c$ all vanish, then 
the weak solutions satisfy a global scale invariant parabolic Harnack inequality 
even when $a_{i,j}$ is time-dependent and not necessarily symmetric. 
This implies a two-sided  Aronson heat kernel estimate that is global in 
time and space. 

One goal of this paper is to obtain similar results in the 
context of metric measure spaces under natural assumptions on the geometry of the space (volume doubling and Poincar\'e inequality). See the parabolic Harnack inequality of Theorem \ref{thm:local VD+PI = local HI}, and applications to heat kernel estimates in Section \ref{sec:applications}.

For purely second order divergence form operators (with no time dependence) on complete Riemannian manifolds,
Grigor'yan  \cite{Gri91} and Saloff-Coste \cite{SC92} observed that the parabolic Harnack inequality
is equivalent to the volume doubling property and the Poincar\'e 
inequality. This characterization of the parabolic Harnack inequality has been very useful in  
the development of analysis on rough spaces including spaces equipped with
a sub-Riemannian structure \cite{JS87, SC95}, Lipschitz manifolds, Alexandrov spaces \cite{KMS01}, 
polytopal complexes and Gromov-Hausdorff limits of Riemannian manifolds \cite{Sturm06}.

Biroli and Mosco \cite{BM95} and Sturm \cite{SturmIII}
 extended these ideas in symmetric strongly local, regular Dirichlet spaces equipped with a non-degenerate intrinsic distance.
The paper \cite{SturmIII} states a 
parabolic Harnack inequality for local weak solutions of the heat equation 
associated with symmetric, time-dependent, strongly local Dirichlet forms 
that are all uniformly comparable to a fixed symmetric strongly local regular 
Dirichlet form that satisfies the doubling property and the Poincar\'e inequality and defines a metric that induces the original topology of the space.
The parabolic Harnack inequality of \cite{SturmIII} relies on mean value estimates that are studied in \cite{SturmII} in a more general context that applies to
a class of (non-symmetric) Dirichlet forms.

The main results of the present work are $L^p$-mean value estimates, the local boundedness of weak solutions, and the parabolic Harnack inequality in the context of time-dependent non-symmetric forms. 
 It is also essential for applications that this work
treats equations associated with forms that are not necessarily
Dirichlet forms but, instead, introduces a setting that mimic general
operators in divergence forms in Euclidean space. See Section \ref{ssec:nonsymmetric forms}.

The results obtained in this paper involve two types of assumptions. 
The first type concerns the structure of the local bilinear forms $\e_t$ and some quantitative conditions. These are introduced in Sections \ref{ssec:nonsymmetric forms} and  \ref{ssec:assumptions on the forms} 
as Assumptions 0, \ref{as:e_t} and \ref{as:p=0}. The second type of assumptions 
concerns the underlying space, these are introduced in Section \ref{ssec:assumptions on the space} as Assumptions \ref{as:sobolev} and \ref{as:VD+PI}.

The main results (Harnack inequality and H\"older continuity of weak solutions) are stated in Theorem \ref{thm:local VD+PI = local HI}, Corollary \ref{cor:global PHI} and 
Corollary \ref{cor:Hoelder}. Applications to heat kernels are described in Section \ref{sec:applications}. 
The present work is in part motivated by further applications to the study of the heat kernel with Dirichlet boundary 
condition and a boundary Harnack principle for harmonic functions of non-symmetric operators. See \cite{LierlSC1, LierlSC3}.

\section{Framework}
The classical theory of symmetric Dirichlet forms is developed in \cite{FOT94}. For the notion of non-symmetric Dirichlet forms see \cite{MR92}.

Let $X$ be a locally compact separable metrizable space and let $\mu$ be a positive Radon measure on $X$ with full support.  On this space, we will consider bilinear forms that 
generalize  (non-symmetric) local Dirichlet forms.

\subsection{The reference form} \label{ssec:reference form}

Throughout this paper, we fix a symmetric, strongly local, 
regular Dirichlet form $(\e,D(\e))$ on $L^2(X,\mu)$ with energy measure $\Gamma$.  
In particular,
\[ \e(u,v) = \int d\Gamma(u,v), \quad \forall u,v \in D(\e). \]
Note that in \cite{FOT94} the energy measure $\Gamma(u,v)$ is denoted as $\frac{1}{2}\mu^c_{<u,v>}$.
For each $u \in D(\e) \cap L^{\infty}(X,\mu)$, $\Gamma(u,u)$ is the unique Radon measure satisfying
\[ \int f d\Gamma(u,u) = \e(u f, u) - \frac{1}{2} \e(f,u^2), \quad \forall f \in D(\e) \cap L^{\infty}(X,\mu). \] 
Further, $\Gamma$ satisfies a sort of  Cauchy-Schwarz inequality (cf.~\cite[Lemma 5.6.1]{FOT94})
\begin{align} \label{eq:CS}
\left| \int fg \, d\Gamma(u,v) \right|
\leq & \left(\int f^2 d\Gamma(u,u) \right)^{\frac{1}{2}} 
      \left(\int g^2 d\Gamma(v,v) \right)^{\frac{1}{2}},
\end{align}
for any $u,v \in D(\e)$ and any bounded Borel measurable functions $f:X \to (-\infty,+\infty)$, $g:X \to (-\infty,+\infty)$.

The energy measure $\Gamma$ satisfies a chain rule: For any $v, u_1, u_2, \ldots, u_m \in D(\e) \cap L^{\infty}(X,\mu)$, $u = (u_1, \ldots, u_m)$, and $\Phi \in \mathcal C^1(\R^m)$ with $\Phi(0)=0$, we have $\Phi(u) \in D(\e) \cap L^{\infty}(X,\mu)$ and
\begin{align} \label{eq:chain rule for Gamma}
d\Gamma(\Phi(u),v) = \sum_{i=1}^{m} \Phi_{x_i}(\tilde u) d\Gamma(u_i, v),
\end{align}
where $\Phi_{x_i}:=\partial \Phi / \partial x_i$ and $\tilde u$ is a quasi-continuous version of $u$, see \cite[(3.2.27) and Theorem 3.2.2]{FOT94}. 
When $\Phi_{x_i}$ is bounded for all $i \in \{1,\ldots,m\}$ in addition, then $\Phi(u) \in D(\e)$ and \eqref{eq:chain rule for Gamma} holds for any $u_1, \ldots, u_m \in D(\e)$ and any $v \in D(\e) \cap L^{\infty}(X,\mu)$; see \cite[(3.2.28)]{FOT94}.

In some cases we will tacitly apply \eqref{eq:CS} or \eqref{eq:chain rule for Gamma} with possibly unbounded functions. Whenever we do this, it will be easy to see that a simple approximation of the type $u_m := u \wedge m$ will justify our reasoning. For any $0 \le u \in \F$, $\e(u - u \wedge m, u - u \wedge m) \to 0$ as $m \to \infty$ by \cite[Theorem 1.2.4]{FOT94}. 

By \cite[Theorem 1.4.2]{FOT94}, $\mathcal D(\e) \cap L^\infty(X,\mu)$ is an algebra. Hence, inequality \eqref{eq:CS} together with a Leibniz rule (\cite[Lemma 3.2.5]{FOT94}) implies that
\begin{align} \label{eq:Gamma(fg)}
 \int d\Gamma(fg,fg)
& \leq  2\int f^2 d\Gamma(g,g)  
      + 2\int g^2 d\Gamma(f,f),
\end{align}
for any $f,g \in D(\e) \cap L^{\infty}(X,\mu)$. Here, on the right hand side, quasi-continuous versions of $f$ and $g$ must be used.

Because the domain of $\e$ plays a fundamental role, we set
\[ \F:=D(\e) \quad \mbox{ and } \quad  \|f\|_\F := \left(
\int |f|^2 d\mu+ \int d\Gamma(f,f)\right)^{1/2}. \]
In our context, the space $\F$ plays the role of the first order $L^2$ Sobolev space. By definition, the (essential) support of $f\in L^2(X,\mu)$ is the support of the measure $|f| d\mu$.
For an open set $U \subset X$, we set
\begin{align*}
& \F_{\mbox{\tiny{c}}}(U) := \{ f \in D(\e) : \textrm{ The support of } f \textrm{ is compact in } U \}, \\
& \F_{\mbox{\tiny{{loc}}}}(U)  :=  \{ f \in L^2_{\mbox{\tiny{loc}}}(U) : \forall \textrm{ compact } K \subset U, \ \exists f^{\sharp} \in D(\e), f\big|_K = f^{\sharp}\big|_K \mbox{ $\mu$-a.e.} \}.
\end{align*} 
For functions in $\F_{\mbox{\tiny{loc}}}(U)$ we always take their quasi-continuous versions.
Note that $\Gamma(f,g)$ can be defined locally on $U$ for $f,g \in \F_{\mbox{\tiny{loc}}}(U)$ by virtue of \cite[Corollary 3.2.1]{FOT94}. For any $v, u_1, \ldots, u_m \in \F_{\mbox{\tiny{loc}}}(U) \cap L^{\infty}_{\mbox{\tiny{loc}}}(U,\mu)$ and $\Phi \in \mathcal C^1(\R^m)$, we have $\Phi(u) \in \F_{\mbox{\tiny{loc}}}(U) \cap L^{\infty}_{\mbox{\tiny{loc}}}(U,\mu)$ and the chain rule \eqref{eq:chain rule for Gamma} holds. 
For convenience, we set $\F_{\mbox{\tiny{c}}} := 
\F_{\mbox{\tiny{c}}}(X)$ and $\F_{\mbox{\tiny{loc}}} := 
\F_{\mbox{\tiny{loc}}}(X)$.
We will use this notation throughout. One fundamental assumption for the results of this paper is that all other bilinear forms on $L^2(X,\mu)$ that we will consider will share with $\e$ the same domain $\F$.

We will make frequent use of the following properties of strongly local forms. We refer to these properties simply as \emph{strong locality}.
\begin{proposition} \label{prop:strongly local}
\begin{enumerate} 
\item If $u,v\in \F$ and there exists $c\in \mathbb R$ such that 
$(u-c)v=0$ almost everywhere, then $\e(u,v)=0$.
\item
If $u,v\in \F$ and there exists $c\in \mathbb R$ such that 
$(u-c)v=0$ almost everywhere, then $\int \psi^2 d\Gamma(u,v)=0$ for all $\psi \in \F_{\mbox{\em \tiny{c}}} \cap L^{\infty}(X,\mu)$.
\end{enumerate}
\end{proposition}

\begin{proof} (i) is proved in \cite[Proofs of Theorem 2.4.2 and Theorem 2.4.3]{CF12}. 
(ii) can be derived from (i).
\end{proof}

\subsection{Some preliminary computations for symmetric strongly local forms}

For a non-negative function $u \in \F_{\mbox{\tiny{loc}}}$ and a positive integer $n$ let 
\[ u_n := u \wedge n. \]

We will be using indices to functions in various ways. To avoid confusion, subcripts $n$ or $m$ will make the function bounded, subcripts $k$ or $l$ or running indices to denote
a sequence, subscript $h$ will denote a Steklov average, and subscript $\varepsilon$ will make the function uniformly positive. We will reintroduce these notations in the appropriate
places.

For a function $\psi \in \F$ we write $d\Gamma(\psi,\psi) \leq c \, d\mu$ if the energy measure $d\Gamma(\psi,\psi)$ is absolutely continuous with respect to $\mu$ and has a Radon-Nikodym derivative that is bounded above by $c \in (0,\infty)$. 
If $\int \widetilde{u}^2 d\Gamma(\psi,\psi) < \infty$ for some function $\psi \in \F_{\mbox{\tiny{c}}}$, then we write $u \in L^2(X,d\Gamma(\psi,\psi))$. In particular, this is the case whenever $\psi \in \F_{\mbox{\tiny{c}}}$ and $d\Gamma(\psi,\psi) \leq c \, d\mu$.

\begin{proposition} \label{prop:u_mg converges in F}
Let $0 \leq u \in \F$ and let $(f_k) \subset \F \cap L^{\infty}$ be a sequence that converges to $u$ in $\F$ as $k \to \infty$. Let $g \in \F \cap L^{\infty}$.
Suppose that $u \in L^2(X,d\Gamma(g,g))$ and that either $\widetilde{f_k} \leq 2\widetilde{u}$ q.e.~for all $k$ or $\widetilde{f_k} \leq m$ q.e.~for some integer $m$ and for all $k$. Then $ug \in \F$, and there exists a subsequence $(f_{k_l})$ such 
that $(f_{k_l} g)$ converges to $u g$ in $\F$ as $l \to \infty$.
\end{proposition}

\begin{proof}
First, note that $f_k g \in \F \cap L^{\infty}$.
Since $(f_k)$ is a Cauchy sequence in $\F$, there exists by \cite[Theorem 2.1.4]{FOT94} a subsequence $(\widetilde {f_{k_l}})$ of the continuous 
modifications $(\widetilde{f_k})$ which converges to $\widetilde{u}$ quasi-everywhere as $l \to \infty$. 
By \eqref{eq:Gamma(fg)},
\begin{align*}
& \quad \int d\Gamma((f_{k_{l+1}} -f_{k_l})g, (f_{k_{l+1}} - f_{k_l})g) \\
& \leq 2 ||g||_{\infty} \int d\Gamma(f_{k_{l+1}} - f_{k_l}, f_{k_{l+1}} - f_{k_l}) + 2\int (\widetilde{f_{k_{l+1}}} - \widetilde{f_{k_l}})^2 d\Gamma(g,g).
\end{align*}
Letting $l \to \infty$, the first term on the right hand side tends to $0$, because $g$ is bounded and $(f_{k_l})$ is a Cauchy sequence in $\F$. 
The second integral on the right hand side tends to $0$ by the dominated convergence theorem and by the fact that  $d\Gamma(g,g)$ charges no set of zero capacity 
by \cite[Lemma 3.2.4]{FOT94}. This shows that $(f_{k_l} g)$ is a Cauchy sequence in $\F$, 
and $(\widetilde{f_{k_l} g}) \to \widetilde{ug}$ quasi-everywhere as $l\to \infty$. Thus, the assertion follows from \cite[Theorem 2.1.4]{FOT94}.
\end{proof}

\begin{lemma} \label{lem:SUP sym2}
Let $p \in \R$,  $\psi \in \F \cap \mathcal C_{\mbox{\em{\tiny{c}}}}(X)$, and $0 \leq u \in \F_{\mbox{\em{\tiny{loc}}}}$ with $u \in L^2(X,d\Gamma(\psi,\psi))$.
Assume one of the following hypotheses.
\begin{enumerate}
\item
$p\geq 2$,
\item
$u$ is locally uniformly positive.
\end{enumerate} 
Then $u u_n^{p-2} \in \F_{\mbox{\em{\tiny{loc}}}}$, $u u_n^{p-2} \psi^2 \in \F_{\mbox{\em{\tiny{c}}}}$, and for any $k >0$ we have
\begin{align} \label{eq:SUP sym}
(1-p) \e(u, u u_n^{p-2} \psi^2) 
& \leq  4k \int u^2 u_n^{p-2} d\Gamma(\psi,\psi)  \nonumber \\
      & \quad + \left( \frac{|1-p|^2}{k} + (1-p) \right) \int \psi^2  u_n^{p-2}  d\Gamma(u,u) \nonumber \\       
      & \quad - ( (1-p)^2 + (1-p)) \int \psi^2  u_n^{p-2} d\Gamma(u_n,u_n).
\end{align}
\end{lemma}

\begin{proof}
First consider the case when $u$ is locally bounded. Then \eqref{eq:chain rule for Gamma}, \eqref{eq:Gamma(fg)} and the strong locality of the reference form easily yield that
 $u u_n^{p-2} \in \F_{\mbox{\tiny{loc}}}$ and $u u_n^{p-2} \psi^2 \in \F_{\mbox{\tiny{c}}}$. 
In order to prove the estimate \eqref{eq:SUP sym}, write
\begin{align*}
(1-p) \e(u, u u_n^{p-2} \psi^2)
& =  2(1-p) \int \psi u u_n^{p-2} d\Gamma(u,\psi)
 + (1-p) \int \psi^2 d\Gamma(u,u u_n^{p-2}).
\end{align*}
The first integral on the right hand side can be estimated using the Cauchy-Schwarz inequality \eqref{eq:CS}. We will estimate the second integral on the right hand side.
We have
\[ \int (n^{p-2} - u_n^{p-2} ) \psi^2 d\Gamma(u,u-u_n) = 0 \]
by \eqref{eq:chain rule for Gamma} and Proposition \ref{prop:local}(iii), and we have
 $\int u_n^{p-2} \psi^2 d\Gamma(u-u_n,u_n) = 0$ by Proposition \ref{prop:local}(iii).
Hence,
\begin{align*}
& \int n^{p-2} \psi^2 d\Gamma(u,u-u_n) \\
& = \int u_n^{p-2} \psi^2 d\Gamma(u,u-u_n) \\
& = \int u_n^{p-2} \psi^2 d\Gamma(u,u) 
- \int u_n^{p-2} \psi^2 d\Gamma(u-u_n, u_n) - \int u_n^{p-2} \psi^2 d\Gamma(u_n,u_n) \\
& = \int u_n^{p-2} \psi^2 d\Gamma(u,u) 
- \int u_n^{p-2} \psi^2 d\Gamma(u_n,u_n).
\end{align*}
Therefore, by \eqref{eq:chain rule for Gamma}, and the strong locality of the energy measure $d\Gamma$, we get
\begin{align*}
& (1-p) \int \psi^2 d\Gamma(u,u u_n^{p-2}) \\
& =  (1-p) \int \psi^2 d\Gamma(u,(u-u_n)n^{p-2}) + (1-p) \int \psi^2 d\Gamma(u_n,u_n^{p-1}) \\
& =  (1-p) \int n^{p-2} \psi^2 d\Gamma(u,u-u_n) - (1-p)^2 \int u_n^{p-2} \psi^2 d\Gamma(u_n,u_n) \\
& =  (1-p) \int u_n^{p-2} \psi^2 d\Gamma(u,u) 
 - ( (1-p)^2 + 1-p) \int u_n^{p-2} \psi^2 d\Gamma(u_n,u_n).
\end{align*}
This proves the assertion when $u$ is locally bounded. 

Now we consider the case when $u$ is unbounded. Since $u \in \F_{\mbox{\tiny{loc}}}$, for any compact set $K \subset X$ there exists $v \in \F$ such that $u=v$ on $K$ $\mu$-almost everywhere. Replacing $v$ by $v \vee 0$, we may assume that $v$ is non-negative. Since $v$ may be unbounded, let us approximate $v$ by $v_m := v \wedge m$. Since $\F$ is the domain of a Dirichlet form, it is clear that $v_m \in \F \cap L^{\infty}$. We also know that $v_m v_n^{p-2} \in \F$ and $v_m v_n^{p-2} \psi^2 \in \F_{\mbox{\tiny{c}}}$ because $\F \cap L^{\infty}$ is an algebra. As $m \to \infty$, $v_m$ converges to $v$ in $\F$ by \cite[Theorem 1.2.4]{FOT94}, and also pointwise. It then follows from \eqref{eq:Gamma(fg)} that
\begin{align*}
 \e((v_m - v_k) v_n^{p-2},(v_m - v_k) v_n^{p-2}) 
 & \le  2 \int (v_m - v_k)^2 d\Gamma(v_n^{p-2},v_n^{p-2})  \\
 & \quad +  2 \int v_n^{2p-4} d\Gamma(v_m-v_k,v_m-v_k) \\
 & \longrightarrow 0 \quad \mbox{ as } k,m \to \infty.
\end{align*}
This shows that $(v_m v_n^{p-2})_m$ is a Cauchy sequence in $\F$. By the closedness of $\F$ and the fact that $v_m v_n^{p-2} \to v v_n^{p-2}$ in $L^2$, it follows that $v v_n^{p-2} \in \F$. In particular, $u u_n^{p-2} \in \F_{\mbox{\tiny{loc}}}$. By similar reasoning, $u u_n^{p-2} \psi^2 \in \F_{\mbox{\tiny{c}}}$.

To see that \eqref{eq:SUP sym} holds also when $u$ is unbounded, we approximate $u$ by $u_m$ and apply \eqref{eq:SUP sym} to $u_m$. 
Because of the 
assumption $u \in L^2(X,d\Gamma(\psi,\psi))$, the dominated convergence theorem, the Markov property, and the strong locality of $d\Gamma$, we can take the limit as $m \to \infty$. This completes the proof of \eqref{eq:SUP sym}.
\end{proof}

\section{Non-symmetric perturbations}

\subsection{Local bilinear forms} \label{ssec:nonsymmetric forms}
In this section, we explain structural properties of (possibly non-symmetric) bilinear forms on $L^2(X,\mu)$, which may or may not be satisfied for a given bilinear form. 
In Assumption 0 below we compile those properties that we will assume in the rest of the paper. Additional hypotheses will be made
in Section \ref{ssec:assumptions on the forms}, namely Assumption \ref{as:e_t} and \ref{as:p=0}. Though the latter assumptions are of a quantitative nature, they also have
 implications on the structure of the forms.

Let $(\e_*,D(\e_*))$ be a (possibly non-symmetric)
bilinear form on $L^2(X,\mu)$. We will be concerned only with forms that are \emph{local}, that is,
$\e_* (f,g)=0$ for any pair $f,g\in D(\e_*)$ with compact 
disjoint supports.  The form  $(\e_*,D(\e_*))$ 
is called \emph{strongly local} if $\e_* (f,g)=0$ for any pair $f,g\in D(\e_*)$ 
with compact supports with $f$ constant on a neighborhood of the support of $g$
or vice versa.   We say that $1$ is locally in the domain $D(\e_*)$ if
for any compact set $K\subset X$ there is a function 
$f_K\in D(\e_*)$ with compact support and such that $ f_K=1$ in a neighborhood of $K$. 
If that is the case and $\e_*$ is local then 
$\e_* (u, 1)$ and $\e_* (1,u)$ are well defined for any function $u\in D(\e_*)$
with compact support. Indeed, assuming that the support of $u$ is $K$, set
$\e_*(u,1)=\e_*(u,f_K)$ and note that the result is independent of the choice of the function $f_K\in D(\e_*)$ which has compact support and equals $1$ 
on a neighborhood of $K$.

Let
\[\e_*^{\mbox{\tiny{sym}}}(f,g) = \frac{1}{2} \big( \e_*(f,g) + 
\e_*(g,f) \big) \]
be the symmetric part of $\e_*$ and
\[ \e_*^{\mbox{\tiny{skew}}}(f,g) = 
\frac{1}{2} \big( \e_*(f,g) - \e_*(g,f) \big) \]
the skew-symmetric part.

\begin{example}
On $X=\mathbb R$, for any choice of $k_1,k_2\in \mathbb N$,
the form $(f,g)\mapsto \e_*(f,g)= \int  f^{(k_1)} g^{(k_2)} dx$,
where $f^{(k)}$ denotes the $k$-th derivative of $f$ and 
$f,g\in \mathcal C^\infty_{\mbox{\emph{\tiny c}}}(\mathbb R)$, is local. It is strongly local
if and only if $(k_1,k_2) \neq (0,0)$. However, if $|k_1-k_2|$ is odd, the symmetric part of the form is degenerate.
\end{example} 

\begin{definition} \label{def:l and r}
Assume that $(\e_*, D(\e_*))$ is local
and that $1$ is locally in $D(\e_*)$.
Define the bilinear forms  $\l_*$ and $\r_*$ by
\begin{align*}
\l_*(u,v) & = \frac{1}{4} \big[ \e_*(uv,1) - \e_*(1,uv) + 
\e_*(u,v) - \e_*(v,u) \big],\\
\r_*(u,v) & = \frac{1}{4} \big[ \e_*(1,uv) - \e_*(uv,1) + \e_*(u,v) - \e_*(v,u) \big]
= - \l(v,u),
\end{align*}
for any $u,v \in D(\e_*)$ with $uv$ having compact support and 
$uv \in D(\e_*)$.
\end{definition}
\begin{remark} \begin{enumerate} \item
Without further assumption, it is not clear 
that there are many $u,v\in D(\e_*)$ such that $uv\in D(\e_*)$. We will assume below that $D(\e_*)$ is the domain $\F$ of the regular reference form.
\item  The locality of  $\e_*$ implies that
 $\l_*$ is \emph{left-strongly local}, i.e.~$\l_*(u,v) = 0$ if 
$u,v$ have compact support and  
$u$ is constant on a neighborhood of the support of $v$. Moreover,
for any  $u,v \in D(\e_*)$ with $uv$ of 
compact support and  
$ uv \in D(\e_*)$,
\begin{equation} 
 \e_*^{\mbox{\tiny{skew}}}
(u,v) = \l_*(u,v) + \r_*(u,v). 
\end{equation}
\end{enumerate}
\end{remark}

\begin{example} \label{exa-high} In the case  
$(f,g)\mapsto \e_*(f,g)= \int ( f^{(k_1)} g^{(k_2)} +fg) dx$, 
$f,g\in \mathcal C^\infty_{\mbox{\emph{\tiny c}}}(\mathbb R)$, we have
\begin{eqnarray*}
\e_*^{\mbox{\em\tiny sym}}(f,g) &=&
\int \left(\frac{1}{2}(f^{(k_1)} g^{(k_2)} + f^{(k_2)} g^{(k_1)}) 
+fg\right) dx,\\ 
\e_*^{\mbox{\em\tiny skew}}(f,g) &=&
\int\frac{1}{2}(f^{(k_1)} g^{(k_2)} - f^{(k_2)} g^{(k_1)}) dx.
\end{eqnarray*}
If $k_1=1, k_2=0$, then 
$\l_*(f,g)= \frac{1}{2}\int f'g \,dx$.
If $k_1=2, k_2=0$, then 
$\l_*(f,g)= 0$ by integration by parts. If $k_1=2, k_2=1$, then  $\l_*(f,g) = \frac{1}{2} \int f'' g' - f' g'' dx$. Anticipating on the definition
of a ``chain rule skew form'' given below, 
note that the skew-symmetric part of $\e_*$ satisfies a  
chain rule in the case $k_1=1,k_2=0$
but not in the case $k_1=2, k_2=1$. 
\end{example}

We now make an important extra hypothesis. We consider a bilinear form $\e_*$ whose domain $D(\e_*)$ is equal to the  domain $\F$ of our reference form $(\e,\F)$.
Since the reference form is a symmetric strongly local regular Dirichlet form, $\F$ has many good properties including the fact that $1\in \F_{\mbox{\tiny loc}}$ and that $\F_{\mbox{\tiny c}} \cap L^\infty (X,\mu)$ is an algebra and is dense in the Hilbert space $(\F,\|\cdot\|_\F)$. 
We will use freely the notation $\e^{\mbox{\tiny sym}}_*, \e^{\mbox{\tiny skew}}_*,\l_*$ and
$\r_*$. Note that, by locality of $\e_*$, $\l(u,v)$ and $\r(u,v)$ are well-defined for any $u \in \F_{\mbox{\tiny{loc}}} \cap L^{\infty}_{\mbox{\tiny{loc}}}(X,\mu)$ and $v \in \F_{\mbox{\tiny{c}}} \cap L^{\infty}(X,\mu)$.

Let $\mathcal D$ be a linear subspace of $\F \cap \mathcal C_{\mbox{\tiny{c}}}(X)$ such that
\begin{enumerate}
\item
$\mathcal D$ is dense in $(\F,\| \cdot \|_{\F})$.
\item
If $f \in \mathcal D$ then $(f \vee 0) \in \mathcal D$ and $(f \wedge m) \in \mathcal D$ for any positive integer $m$.
\item
If $f \in \mathcal D$ then $\Phi(f) \in \mathcal D$ for any function $\Phi \in \mathcal{C}^1(\R^m)$ with $\Phi(0)=0$, where $m$ is a positive integer.
\end{enumerate}
Readers who do not appreciate this generality may simply take $\mathcal D$ to be $\F \cap \mathcal C_{\mbox{\tiny{c}}}(X)$. An application that requires $\mathcal D$ to be a proper subspace of $\F \cap \mathcal C_{\mbox{\tiny{c}}}(X)$ can be found in \cite[Lemma 4.1]{LierlHKEf}.

\begin{definition} \label{def:leibniz rule}
Assuming $\e_*$ is local with $D(\e_*)=\F$, 
we say that  $\e^{\mbox{\em\tiny{skew}}}_*$ is a 
{\em chain rule skew form relative to $\mathcal D$} 
if the following two properties hold:
\begin{enumerate} 
\item
(Leibniz rule) For any $u,v,f \in \mathcal D$, we have
 \[ \l_*(uf,v) = \l_*(u,fv) + \l_*(f,uv). \]
\item (Chain rule) Let $v, u_1, u_2, \ldots, u_m \in \mathcal D$ and $u = (u_1, \ldots, u_m)$. For any $\Phi \in \mathcal C^2(\R^m)$,
\begin{align*}
\l_*(\Phi(u),v) = \sum_{i=1}^{m} \l_*(u_i, \Phi_{x_i}(u) v).
\end{align*}
\end{enumerate}
\end{definition}

\begin{remark}
In Example \ref{ex:Euclidean} below we give an example of a chain rule skew form on Euclidean Space $X = \R^n$. Note that, in this example, $\l_*$ includes terms of first order 
and terms of second order. In general, we are not able isolate the first order terms or the second order terms, and we therefore avoid the notion of ``order''. Instead, it makes sense
to speak of a ``local non-symmetric form with symmetric strongly local part'' or of a ``strongly local non-symmetric form''.
\end{remark}

\begin{remark}
When $\e_*$ is a non-symmetric strongly local regular Dirichlet form, its skew-symmetric part is a chain rule skew form with respect to the domain of the form itself, 
see \cite[Theorems 3.2 and 3.8]{HMS10}. In this case, $\l_*(u,fv)$ is the same as $\frac{1}{2} \langle L(u,v),f \rangle$ in the notation of \cite{HMS10}. 
Though \cite{HMS10} is written in greater generality, it seems to us that they implicitly assume that $\langle L(u,v),f \rangle$ is skew-symmetric, 
in which case $\l_* = \e_*^{\mbox{\tiny{skew}}} = \r_*$.
\end{remark}

The following assumption on the structure of the bilinear form $\e_*$ refers to the domain $\F$ of the reference form $(\e,\F)$ defined in Section \ref{ssec:reference form}, and to the subset $\mathcal D \subset \F \cap \mathcal{C}_{\mbox{\tiny{c}}}(X)$ introduced before Definition \ref{def:leibniz rule}.
\begin{assumption0}
The form $(\e_*,D(\e_*))$ is local,
its domain $D(\e_*)$ is $\F$ and:
\begin{enumerate}
\item The form $\e_*$ satisfies 
$$\forall\,f,g\in \F, \quad |\e_*(f,g)|\le C_*\|f\|_\F\|g\|_\F$$
for some constant $C_* \in (0,\infty)$.
\item For all $f,g\in \F \cap L^{\infty}(X,\mu)$ with $fg \in \F_{\mbox{\em \tiny{c}}}(X)$,
\[ |\e_*^{\mbox{\em\tiny sym}}(fg,1)|\le C_*\|f\|_\F\|g\|_\F, \]
for some constant $C_* \in (0,\infty)$.
\item
For all $f \in \F \cap \mathcal C_{\mbox{\em \tiny{c}}}(X)$, 
\[ C^{-1} \, \e(f,f) \leq \e_*^{\mbox{\em \tiny{s}}}(f,f) \leq C \, \e(f,f), \]
for some constant $C \in (0,\infty)$, where $\e_*^{\mbox{\em \tiny{s}}}(f,f) := \e_*^{\mbox{\em \tiny{sym}}}(f,f) - \e_*^{\mbox{\em \tiny{sym}}}(f^2,1)$.
\item The skew-symmetric part $\e^{\mbox{\em\tiny{skew}}}_*$ is a chain rule skew form relative to $\mathcal D$. 
 \end{enumerate}
\end{assumption0}

\begin{remark} \label{rem:assumption 0}
\begin{enumerate}
\item  
Under Assumption 0(i), the form $\e_*$ as well as its symmetric part $\e^{\mbox{\tiny sym}}_*$, and its skew-symmetric part $\e^{\mbox{\tiny skew}}_*$
are continuous on $\F\times \F$. 
\item
Under Assumption 0(iii), $f\mapsto \|f\|_\F$ and $f\mapsto 
(\e^{\mbox{\tiny{s}}}_*(f,f)+\int|f|^2d\mu)^{1/2}$ are two equivalent norms on $\F$. 
\item
Under Assumption 0(i)-(iii), the bilinear form
\[ \e^{\mbox{\tiny s}}_*
(f,g):=\e^{\mbox{\tiny sym}}_*(f,g)-
\e^{\mbox{\tiny sym}}_*(fg,1), \]
defined for $f,g\in \F \cap L^{\infty}(X,\mu)$ with $fg\in \F_{\mbox{\tiny c}}$, extends continuously to $\F \times \F$. The extension $(\e^{\mbox{\tiny s}}_*,\F)$ is a regular strongly local 
symmetric Dirichlet form with domain $\F$. The proof of this elementary fact will be contained in a forthcoming paper.
\item
Assumption 0(iii) holds if and only if there exists a constant $C \in (0,\infty)$ such that the energy measure $\Gamma_*$ of $\e^{\mbox{\tiny s}}_*$ satisfies
\begin{align} \label{eq:Gamma_* comparable} 
C^{-1} \int f^2 d\Gamma(g,g) \leq \int f^2 d\Gamma_*(g,g) \leq C \int f^2 d\Gamma(g,g),
\end{align}
for all $f:X \to (-\infty,+\infty)$ bounded Borel measurable and all $g \in \F \cap \mathcal C_{\mbox{\tiny{c}}}(X)$. In this case, \eqref{eq:Gamma_* comparable} extends to all functions $g \in \F$.
\item
Under an additional assumption, the maps $(f,g) \mapsto \e_*(fg,1)$ and $(f,g) \mapsto \e_*(1,fg)$, extend continuously to $\F \times \F$. 
Also the bilinear forms $\l_*$ and $\r_*$ can be extended continuously to $\F \times \F$. See Proposition \ref{prop:extending L}. For the purpose of this paper, we will not need these extensions.
\item
We use the bilinear forms $\l_*$ and $\r_*$ only in Proposition \ref{prop:skew identity with p}, in Proposition \ref{prop:|u| is subsolution}, and in Proposition \ref{prop:extending L} below.
 \end{enumerate} 
\end{remark}

\begin{remark}
\begin{enumerate}
\item
Suppose $(\e,\F)$ is a local positivity preserving closed form which satisfies Assumption 0(i)-(iii). Then it also satisfies Assumption 0(iv). 
 \item Suppose that, in addition to Assumption 0, it holds that
$$|\e^{\mbox{\tiny sym}}_*(f^2,1)|\le C\|f\|_2\|f\|_\F,$$
for any $f \in \F \cap \mathcal C_{\mbox{\tiny{c}}}(X)$, where $\| f \|_2 =  \left( \int |f|^2 d\mu \right)^{1/2}$. Then there exists 
$\lambda\in \mathbb R$ such that $\e_*+\lambda \langle\cdot,\cdot\rangle_\mu$
is a coercive closed form.  In addition, this form is positivity preserving, as can easily be seen by applying Proposition \ref{prop:local}. See also \cite{MaRock}. In fact, the form $(\e_*,\F)$ itself is closed and 
positivity preserving.
\end{enumerate} 
\end{remark}

We will make frequent use of the following properties of local forms. 
\begin{proposition} \label{prop:local}
Under {\em Assumption 0}, the following holds:
\begin{enumerate} 
\item If $u,v\in \F$ are such that $uv=0$ $\mu$--a.e.\ then $\e_*(u,v)=0$. 
\item If $u,v\in \F$ are such that there exists $c\in \mathbb R$ such that 
$(u-c)v=0$ almost everywhere, then $\e^{\mbox{\em\tiny s}}_*(u,v)=0$.
\item
If $u,v\in \F$ are such that there exists $c\in \mathbb R$ such that 
$(u-c)v=0$ almost everywhere, then $\int \psi^2 d\Gamma_*(u,v)=0$ for all $\psi \in \F_{\mbox{\em \tiny{c}}} \cap L^{\infty}(X,\mu)$.
\item
If $u \in \F$, $v \in \F_{\mbox{\em \tiny{c}}}$ are such that $uv \in \F$ and there exists $c\in \mathbb R$ such that 
$(u-c)v=0$ almost everywhere, then $\l_*(u,v)=0$. We refer to this property as the \emph{strong left-locality} of $\l_*$. Similarly, $\r_*$ is strongly right-local.
\end{enumerate}
\end{proposition}

\begin{proof} (ii) and (iii) holds because $(\e_*^{\mbox{\tiny{s}}},\F)$ is in fact a strongly local regular Dirichlet form, and by Proposition \ref{prop:local}. See Remark \ref{rem:assumption 0}(iii).

For the proof of (i), it suffices to repeat the reasoning in \cite[Proof of Theorem 2.4.2]{CF12} and apply Assumption 0(i). 
The statement (iv) follows from the definition of $\l_*$, the bilinearity of $\e_*$, and (i). 
\end{proof}

\begin{example} \label{exa-eucl}
On $X=\mathbb R$ equipped with Lebesgue measure $dx$, let $a,b,c$ be bounded measurable functions and
consider 
$$\e_*(f,g)= \int f'g'dx+ \int a(f' g-g'f)dx +\int b(f'g+fg')dx +\int cfgdx $$ 
with domain the first Sobolev space $\F=W^{1,2}(\mathbb R)$. 
Then $\e_*$ obviously satisfies Assumption 0.
Assume that the distributions $a',b'$ are signed Radon measures 
(obviously, this is not always the case!). 
The form $\e_*$ is not a Dirichlet form in general. Indeed,
for  $\e_*$ to be a Dirichlet form it is necessary that
$$c \ge b'-a' \mbox{ and } c\ge b'+a'.$$ 
If $\gamma$ is a non-negative Radon measure such that $c+\gamma\ge b'-a'$ 
and $c+\gamma\ge b'+a'$ then $(f,g)\mapsto \e_* (f,g)+\gamma(fg)$ is a 
Dirichlet form on $L^2(\R,dx)$ but its domain $\F\cap L^2(\mathbb R,\gamma)$
will, in general, be smaller than the first Sobolev space $\F$. 
\end{example}

\begin{example} \label{ex:Euclidean}
On Euclidean space $X=\R^n$, consider the form
\begin{align*}
 \e_*(f,g) = & \int  \left(
\sum_{i,j=1}^n  a_{i,j} \partial_i f \partial_j g  
           + \sum_{i=1}^n b_i \partial_i f \, g  
           + \sum_{i=1}^n f \, d_i \partial_i g  
           + c f g \right)\, dx,
\end{align*}
with coefficients $a=(a_{i,j})$, $b=(b_i)$, $d=(d_i)$, $c$ satisfying 
\begin{enumerate}
\item $\sum_{i,j=1}^n |a_{i,j} - a_{j,i}| \leq M$ for some $M>0$,
\item there are positive constants $k_0, K_0$ such that
$k_0 |\xi|^2 \leq \sum_{i,j} a_{i,j} \xi_i \xi_j \leq K_0 |\xi|^2$ for all $\xi \in \R^n$.
\end{enumerate}

Set $\tilde a_{i,j} := (a_{i,j} + a_{j,i})/2$ and $\check a_{i,j} = (a_{i,j} - a_{j,i})/2$. Then the symmetric part of $\e$ is 
\begin{align*}
 \e^{\mbox{\emph{\tiny{sym}}}}_*(f,g) 
 = & \int \sum_{i,j=1}^n \tilde a_{i,j} \partial_i f \partial_j g \, dx
           + \int \sum_{i=1}^n \frac{b_i+d_i}{2} \partial_i f \, g \, dx \\
   &        + \int \sum_{i=1}^n f \, \frac{b_i+d_i}{2} \partial_i g \, dx
       + \int c f g \, dx,
\end{align*}
while the skew-symmetric part of $\e$ is 
\begin{align*}
 \e^{\mbox{\emph{\tiny{skew}}}}_*(f,g) 
 = & \int \sum_{i,j=1}^n \check a_{i,j} \partial_i f \partial_j g \, dx
           + \int \sum_{i=1}^n \frac{b_i-d_i}{2} \partial_i f \, g \, dx \\
   &        + \int \sum_{i=1}^n f \, \frac{-b_i + d_i}{2} \partial_i g \, dx.
\end{align*}
The symmetric part can be written as $\e^{\mbox{\emph{\tiny{sym}}}}_*(f,g) 
= \e^{\mbox{\emph{\tiny{s}}}}_*(f,g) + 
\e^{\mbox{\emph{\tiny{sym}}}}_*(fg,1)$, 
where $\e^{\mbox{\emph{\tiny{s}}}}_*$ is the symmetric strongly local part
\begin{align*}
 \e^{\mbox{\emph{\tiny{s}}}}_*(f,g) & = \int \sum_{i,j=1}^n \tilde a_{i,j} \partial_i f \partial_j g \, dx.
\end{align*}
The skew-symmetric part can be written as $\e^{\mbox{\emph{\tiny{skew}}}}_*(f,g) = \l(f,g) + \r(f,g)$ with
\[ \l_*(f,g) = \int \sum_{i,j=1}^n \frac{\check a_{i,j}}{2} \partial_i f \, \partial_j g \, d\mu 
                 + \int \sum_{i=1}^n \frac{b_i - d_i}{2} \partial_i f \, g \, d\mu. \]
In the context of this paper, the coefficients $(a_{i,j}),(b_i), (d_i), c$
can be allowed to be functions of the time-space variable $(t,x)$ so that 
the form $\e_*$ above would also depend on $t$. In this example, if all coefficients are bounded measurable, the underlying domain $\F$ is the 
first Sobolev space and Assumption 0 is satisfied (with respect to that space).
\end{example}

\begin{remark}
Condition (i) and (ii) in the above example is equivalent to
\begin{enumerate}
\item[(ii')]  There are positive constants $k_0, K_0$ such that 
\[| \sum_{i,j} a_{i,j} \xi_i \zeta_j |\leq K_0 |\xi| |\zeta| \quad  \textrm{ for all } \xi,\zeta \in \R^n \]
and
\[k_0 |\xi|^2 \leq \sum_{i,j} a_{i,j} \xi_i \xi_j \quad  \textrm{ for all } \xi \in \R^n. \]
\end{enumerate}
\end{remark}

\subsection{Preliminary computations for local bilinear forms} \label{ssec:algebraic computations}
Let $(\e_*,\F)$ be a bilinear form satisfying the structural hypotheses of Assumption 0. 

Recall that for a non-negative function $u \in \F_{\mbox{\tiny{loc}}}$ and a positive integer $n$, we set
\[ u_n := u \wedge n. \]
Due to Assumption 0(iii), $d\Gamma_*(\psi,\psi) \leq c \, d\mu$ if and only if $d\Gamma(\psi,\psi) \leq c \, d\mu$. 

\begin{lemma} \label{lem:approximation nonsym} 
Let $p \in \R$ and $\psi \in \F \cap \mathcal C_{\mbox{\emph{\tiny{c}}}}(X)$. Let $0 \leq u \in \F_{\mbox{\emph{\tiny{loc}}}}$ with $u \in L^2(X,d\Gamma(\psi,\psi))$.
Assume either of the following hypotheses:
\begin{enumerate}
\item
$p\geq 2$,
\item
$p \neq 0$ and $u$ is locally uniformly positive.
\end{enumerate}
Then there exists a sequence of non-negative functions $(f_k) \subset \mathcal D$ such that $f_k \to u$ in $\F$, and
\begin{align*}
\e_*(u, u u_n^{p-2} \psi^2) 
= & \lim_{k \to \infty} \e_*(f_k, f_k (f_k \wedge n)^{p-2} \psi^2).
\end{align*}
\end{lemma}

\begin{proof}
It suffices to prove the lemma for $u \in \F$, because $\e_*$ is local and $\psi$ has compact support. 
By the strong locality of the reference form, we have $u \in L^2(X,d\Gamma(u_n^{p-2},u_n^{p-2}))$. 
As $u \in L^2(X,d\Gamma(\psi,\psi))$ by assumption, it follows by \eqref{eq:chain rule for Gamma} and \eqref{eq:CS} that
 $u \in L^2(X,d\Gamma(g,g))$, where $g := u_n^{p-2} \psi^2 \in \F \cap L^{\infty}$.

Since $u_m := u \wedge m$ converges to $u$ in $\F$ as $m \to \infty$, there exists by
Proposition \ref{prop:u_mg converges in F} a subsequence $(u_{m_k})$ such that $u_{m_k} \to u$ in $\F$ and 
$u_{m_k} g \to ug$ in $\F$ as $k \to \infty$. Let $g_k := (u_{m_k} \wedge n)^{p-2} \psi^2$. As we let $k \to \infty$, we obviously have $g_k \to g$ quasi-everywhere. 
Applying \eqref{eq:chain rule for Gamma}, Proposition \ref{prop:u_mg converges in F}, and passing to a subsequence which we again denote by $(g_k)$, we find that
 $g_k \to g$ in $\F$. 
Therefore, by \eqref{eq:chain rule for Gamma} and the dominated convergence theorem,
\begin{align*}
& \quad  || u_{m_k} g - u_{m_k} g_k||_{\F}^2 \\
&  \leq 
||u_{m_k}^2||_{\infty} ||g-g_k||_{\F}^2 + \int (g-g_k)^2 d\Gamma(u_{m_k}, u_{m_k}) + \int(g-g_k)^2 u_{m_k}^2 d\mu \\
&  \longrightarrow 0, \quad \mbox{ as } k \to \infty.
\end{align*}
Combining the above with Assumption 0(i), we obtain
\begin{align*}
& \quad | \e_*(u, u u_n^{p-2} \psi^2) - \e_*(u_{m_k}, u_{m_k}(u_{m_k} \wedge n)^{p-2} \psi^2)| \\
& \leq |\e_*(u - u_{m_k}, u g)| + |\e_*(u_{m_k}, ug - u_{m_k} g)| + |\e_*(u_{m_k}, u_{m_k} g  - u_{m_k} g_k)| \\
&  \longrightarrow 0, \quad \mbox{ as } k \to \infty.
\end{align*}
This shows that
\begin{align*}
\e_*(u, u u_n^{p-2} \psi^2) 
& = \lim_{k \to \infty} \e_*(u_{m_k}, u_{m_k}(u_{m_k} \wedge n)^{p-2} \psi^2).
\end{align*}

Since $\mathcal D$ is dense in $(\F,\| \cdot\|_{\F})$, there exists, for each $k$, a sequence $(f_l) \subset \mathcal D$ such that
$f_l \to u_{m_k}$ in $\F$ as $l \to \infty$. Since we may assume that $0 \leq f_l \leq m_k$ for all $k$, applying Proposition \ref{prop:u_mg converges in F} and passing to a subsequence, we get that
 $f_l g_k \to u_{m_k} g_k$ in $\F$ as $l \to \infty$. As we let $k \to \infty$, we can find a diagonal subsequence 
$(f_{k,l(k)}) \subset \mathcal D$ such that $f_{k,l(k)} \to u$ in $\F$, and, by Assumption 0(i),
\begin{align*}
\e_*(u, u u_n^{p-2} \psi^2) 
& = \lim_{k \to \infty} \e_*(f_{k,l(k)}, (f_{k,l(k)} \wedge n)^{p-2} \psi^2).
\end{align*}
\end{proof}

The next lemma is an immediate consequence of Lemma \ref{lem:SUP sym2}. We state it here for convenience because we will apply it together with the other results in this subsection, though in the main text we will refer to it as Lemma \ref{lem:SUP sym2}.

\begin{lemma}
Let $p \in \R$,  $\psi \in \F \cap \mathcal C_{\mbox{\em{\tiny{c}}}}(X)$, and $0 \leq u \in \F_{\mbox{\em{\tiny{loc}}}}$ with $u \in L^2(X,d\Gamma_*(\psi,\psi))$.
Assume one of the following hypotheses.
\begin{enumerate}
\item
$p\geq 2$,
\item
$u$ is locally uniformly positive.
\end{enumerate} 
Then $u u_n^{p-2} \in \F_{\mbox{\em{\tiny{loc}}}}$, $u u_n^{p-2} \psi^2 \in \F_{\mbox{\em{\tiny{c}}}}$, and for any $k >0$ we have
\begin{align*} 
(1-p) \e_*^{\mbox{\em{\tiny{s}}}}(u, u u_n^{p-2} \psi^2) 
& \leq  4k \int u^2 u_n^{p-2} d\Gamma_*(\psi,\psi)  \nonumber \\
      & \quad + \left( \frac{|1-p|^2}{k} + (1-p) \right) \int \psi^2  u_n^{p-2}  d\Gamma_*(u,u) \nonumber \\       
      & \quad - ( (1-p)^2 + (1-p)) \int \psi^2  u_n^{p-2} d\Gamma_*(u_n,u_n).
\end{align*}
\end{lemma}

\begin{proposition} \label{prop:skew identity with p} 
Let $p \in \R$, $\psi \in \F \cap \mathcal C_{\mbox{\em{\tiny{c}}}}(X)$, 
and $0 \leq u \in \F_{\mbox{\emph{\tiny{loc}}}}$ with $u \in L^2(X, d\Gamma_*(\psi,\psi))$. Assume either of the following hypotheses:
\begin{enumerate}
\item
$p\geq 2$,
\item
$p \neq 0$ and $u$ is locally uniformly positive.
\end{enumerate}
Then
\begin{align*}
\e_*^{\mbox{\emph{\tiny{skew}}}}(u, u u_n^{p-2} \psi^2) 
& =  \e_*^{\mbox{\emph{\tiny{skew}}}}(u u_n^{\frac{p-2}{2}},u u_n^{\frac{p-2}{2}}\psi^2) + \frac{2-p}{p} \e_*^{\mbox{\emph{\tiny{skew}}}}(u_n^{p/2},u_n^{p/2} \psi^2) \\
& \quad +  \frac{2-p}{p} \e_*^{\mbox{\emph{\tiny{skew}}}}(u_n^p \psi^2,1) .
\end{align*}
\end{proposition}

\begin{proof}
Due to Lemma \ref{lem:approximation nonsym} , Assumption 0(i) and an approximation argument, it suffices to consider the case when $0 \leq u \in \mathcal D$. By a density argument and the properties of $\mathcal D$, it suffices to consider $\psi \in \mathcal D$.

Adding and subtracting $\l_*(u u_n^{p-2},u \psi^2)$ on the right hand side and using the Leibniz rule for $\r_*$ (in the second argument), we obtain
\begin{align*}
\e_*^{\mbox{\tiny{skew}}}(u, u u_n^{p-2} \psi^2) 
& =  \l_*(u, u u_n^{p-2} \psi^2) + \r_*(u, u u_n^{p-2} \psi^2) \\
& =  \l_*(u, u u_n^{p-2} \psi^2) - \l_*(u u_n^{p-2},u \psi^2) + \r_*(u^2 u_n^{p-2},\psi^2).
\end{align*}
We consider the three terms on the right hand side separately. 
By the Leibniz rule for $\r_*$ and the fact that $\r_*(f,g) + \l_*(g,f)=0$ and $\r_*(f,g) + \l_*(f,g) = \e_*^{\mbox{\tiny{skew}}}(f,g)$ by definition, we have
\begin{align*}
\r_*(u^2 u_n^{p-2},\psi^2)
& =  \r_*(u^2 u_n^{p-2},\psi^2) + \r_*(u u_n^{\frac{p-2}{2}} \psi^2, u u_n^{\frac{p-2}{2}}) +  \l_*(u u_n^{\frac{p-2}{2}}, u u_n^{\frac{p-2}{2}}\psi^2) \\
& =  \e_*^{\mbox{\tiny{skew}}}(u u_n^{\frac{p-2}{2}},u u_n^{\frac{p-2}{2}}\psi^2).
\end{align*}
Due to the strong left-locality of $\l_*$ and the fact that $u = u_n + (u-u_n)$, we have
\begin{align} \label{eq:l1 1.16}
 \l_*(u, u u_n^{p-2} \psi^2)
& =  \l_*(u_n, u u_n^{p-2} \psi^2) + \l_*(u-u_n, u u_n^{p-2} \psi^2) \nonumber \\
& =  \l_*(u_n, u_n^{p-1} \psi^2) + \l_*(u-u_n, u u_n^{p-2} \psi^2).
\end{align}
Observe that, by the locality and bilinearity of $\l_*$,
\begin{align*}
\l_*((u-u_n) u_n^{p-2},u \psi^2) 
& = \l_*((u-u_n) n^{p-2},u \psi^2) = \l_*(u-u_n, n^{p-2} u \psi^2) \\
& = \l_*(u-u_n, u_n^{p-2} u \psi^2),
\end{align*}
hence, applying again the strong left-locality of $\l_*$, the Leibniz rule for $\l_*$, we get
\begin{align} \label{eq:l2 1.16}
- \l_*(u u_n^{p-2},u \psi^2)
& = - \l_*(u_n^{p-1},u \psi^2) - \l_*((u-u_n) u_n^{p-2},u \psi^2) \nonumber\\
& = - \l_*(u_n^{p-1},u_n \psi^2) -  \l_*(u-u_n, u u_n^{p-2} \psi^2) \nonumber \\
& = - \l_*(u_n^p, \psi^2) + \l_*(u_n,u_n^{p-1}\psi^2) -  \l_*(u-u_n, u u_n^{p-2} \psi^2) \nonumber \\
& = - (p-1) \l_*(u_n,u_n^{p-1} \psi^2) - \l_*(u-u_n, u u_n^{p-2} \psi^2).
\end{align}
In the last equality, we applied the chain rule for $\l_*$ with $\Phi(x) = x^p$ for $x \geq 0$ and $\Phi(x)=0$ for $x < 0$ (for $0 \neq p < 2$), 
and $\Phi(x) = x^p$ (for $p\geq2$).
Combining \eqref{eq:l1 1.16} and \eqref{eq:l2 1.16}, and applying the chain rule and the Leibniz rule for $\l_*$, we obtain
\begin{align*}
& \l_*(u, u u_n^{p-2} \psi^2) - \l_*(u u_n^{p-2},u \psi^2) \\
& = (2-p) \l_*(u_n, u_n^{p-1} \psi^2) \\
& = \frac{(2-p)}{p} \l_*(u_n^p,\psi^2) \\
& =  \frac{2(2-p)}{p} \l_*(u_n^{p/2},u_n^{p/2} \psi^2)  \\
& =  \frac{2-p}{p} \e_*^{\mbox{\tiny{skew}}}(u_n^p \psi^2,1) + \frac{2-p}{p} \e_*^{\mbox{\tiny{skew}}}(u_n^{p/2},u_n^{p/2}\psi^2).
\end{align*}
\end{proof}

\begin{corollary} \label{cor:skew identity with p for u_eps} 
Let $0 \neq p \in \R$, $\psi \in \F \cap \mathcal C_{\mbox{\em{\tiny{c}}}}(X)$, 
and $0 \leq u \in \F_{\mbox{\emph{\tiny{loc}}}}(X)$. Assume that $u$ is locally bounded. Let $\varepsilon >0$.
Then
\begin{align*}
\e_*^{\mbox{\emph{\tiny{skew}}}}(u_{\varepsilon}, u_{\varepsilon}^{p-1} \psi^2) 
& =  \frac{2}{p} \e_*^{\mbox{\emph{\tiny{skew}}}}(u_{\varepsilon}^{p/2},u_{\varepsilon}^{p/2}\psi^2) 
+  \frac{2-p}{p} \e_*^{\mbox{\emph{\tiny{skew}}}}(u_{\varepsilon}^p \psi^2,1) .
\end{align*}
\end{corollary}

\begin{proof}
By the regularity of the reference form, by Proposition \ref{prop:u_mg converges in F} and by \eqref{eq:chain rule for Gamma} and \eqref{eq:CS}, we can find a sequence $(f_k) \subset \F \cap \mathcal C_{\mbox{\tiny{c}}}(X)$ such that
$f_k \to u$ in $\F$, $(f_k+\varepsilon)^q \to u_{\varepsilon}^q$ locally in $\F$, and $(f_k+\varepsilon)^{q} \psi^2 \to u_{\varepsilon}^{q} \psi^2$ in $\F$, 
for finitely many $q \in \R$.
Hence the assertion follows from Proposition \ref{prop:skew identity with p} together with an approximation argument and Assumption 0(i).
\end{proof}

\subsection{Time-dependence and quantitative assumptions on the bilinear forms} \label{ssec:assumptions on the forms}

In this section, we consider families of 
time-dependent forms each of which is of the type introduced in Assumption 0.

For every $t \in \R$, let $(\e_t,\F)$ be a (possibly non-symmetric) local 
bilinear form. Throughout, we assume further that for every 
$f,g \in \F$ the map $t \mapsto \e_t(f,g)$ is measurable and that, 
for each $t$, $\e_t$ satisfies the structural hypotheses
introduced in Assumption 0.  In particular, for every $t \in \R$,   
$$\e_t^{\mbox{\tiny{s}}}(f,g) = 
\e^{\mbox{\tiny sym}}_t(f,g) - 
\e_t^{\mbox{\tiny{sym}}}(fg,1),$$ 
$f,g \in \F$, $fg \in \F_{\mbox{\tiny{c}}}$, extends to $\F\times \F$ as  
a symmetric regular strongly local Dirichlet form with domain $\F$ and energy measure
$\Gamma_t$.

\begin{assumption} \label{as:e_t}
\begin{enumerate}
\item
There is a constant $C_1 \in [1,\infty)$ so that for all $t \in \R$ and all bounded Borel measurable functions $f:X \to (-\infty,+\infty)$, and all $g \in \F \cap \mathcal C_{\mbox{\emph{\tiny{c}}}}(X)$,
\begin{align*}
C_1^{-1} \int f^2 d\Gamma(g,g) \leq \int f^2 d\Gamma_t(g,g) \leq C_1 \int f^2 d\Gamma(g,g),
\end{align*}
where $\Gamma_t$ is the energy measure of $\e_t^{\mbox{\emph{\tiny{s}}}}$.
\item
There are constants $C_2, C_3 \in [0,\infty)$ so that for all $t \in \R$ and all $f \in \F \cap \mathcal C_{\mbox{\emph{\tiny{c}}}}(X)$,
\begin{align*}
|\e_t^{\mbox{\emph{\tiny{sym}}}}(f^2,1)| 
\leq 2 \left(\int f^2 d\mu \right)^{\frac{1}{2}} \left(C_2 \int d\Gamma(f,f) + C_3 \int f^2 d\mu \right)^{\frac{1}{2}}
\end{align*}
\item
There are constants $C_4, C_5 \in [0,\infty)$ such that for all $t \in \R$ and all $f,g \in \F \cap \mathcal C_{\mbox{\emph{\tiny{c}}}}(X)$,
\begin{align*}
 \left| \e_t^{\mbox{\emph{\tiny{skew}}}}(f,fg^2) \right|
 \leq  2 \left( \int f^2 d\Gamma(g,g) \right)^{\frac{1}{2}} \left( C_4 \int g^2 d\Gamma(f,f) + C_5 \int f^2 g^2 d\mu \right)^{\frac{1}{2}}.
\end{align*}
\end{enumerate}
\end{assumption}

\begin{assumption} \label{as:p=0}
There are constants $C_6, C_7 \in [0,\infty)$ such that for all $t \in \R$,
\begin{align*} \big|\e_t^{\mbox{\emph{\tiny{skew}}}}(f,f^{-1} g^2)\big| 
& \leq  2 \left( \int d\Gamma(g,g) \right)^{\frac{1}{2}} \left( C_6 \int g^2 d\Gamma(\log f,\log f)  \right)^{\frac{1}{2}} \\
 & \quad + 2 \left( \int d\Gamma(g,g) + \int g^2 d\Gamma(\log f,\log f) \right)^{\frac{1}{2}} \left( C_7  \int g^2 d\mu \right)^{\frac{1}{2}},
\end{align*}
for all $g \in \F \cap \mathcal{C}_{\mbox{\emph{\tiny{c}}}}$ and all $0 \leq f \in \F_{\mbox{\emph{\tiny{loc}}}} \cap \mathcal C$ with $f + f^{-1} \in L^{\infty}_{\mbox{\emph{\tiny{loc}}}}(X,\mu)$.
\end{assumption}

\begin{example}
Consider the divergence form bilinear forms on Euclidean space of Example \ref{ex:Euclidean}. Suppose all coefficients $a_{i,j}, b_i, d_i, c$ are bounded. 
Then these bilinear forms satisfy Assumptions \ref{as:e_t} and \ref{as:p=0}.
The constants $C_4$, $C_6$ can be taken to be equal to $0$ 
when $(a_{i,j})$ is symmetric. The constants $C_2$, $C_5$, $C_7$ can be taken to be equal to $0$ when $b_i=d_i=0$ for all $i$. The constant $C_3$ can be taken to be equal to $0$ when $c=0$.
\end{example}

\begin{remark} \label{rem:assumptions}
\begin{enumerate}
\item
In Assumption \ref{as:e_t}, 
(i) extends to all $g \in \F$, 
(ii) extends to all $f \in \F_{\mbox{\tiny{c}}} \cap L^{\infty}(X,\mu)$, and 
(iii) extends to all $0 \leq f \in \F \cap L^{\infty}(X,\mu)$, $g \in \F \cap L^{\infty}(X,\mu)$, and by locality also to 
$0 \leq f \in \F_{\mbox{\tiny{loc}}} \cap L^{\infty}_{\mbox{\tiny{loc}}}(X,\mu)$, $g \in \F_{\mbox{\tiny{c}}} \cap L^{\infty}(X,\mu)$.
This follows by approximating $f$ and $g$ by functions in $\F \cap \mathcal C_{\mbox{\tiny{c}}}(X)$ and by applying 
Assumption 0 and Proposition \ref{prop:u_mg converges in F}.
\item
Assumption \ref{as:p=0} extends to all $g \in \F_{\mbox{\tiny{c}}} \cap L^{\infty}(X,\mu)$ and $0 \leq f \in \F_{\mbox{\tiny{loc}}}$ 
with $f + f^{-1} \in L^{\infty}_{\mbox{\tiny{loc}}}(X,\mu)$, and to all $f,g \in \F \cap L^{\infty}(X,\mu)$ with $f \geq 0$ and $f+f^{-1} \in L^{\infty}(X,\mu)$.
\item
When we apply Assumption \ref{as:e_t} and \ref{as:p=0} in computations, we will often make use of the elementary inequality $\sqrt{a+b} \leq\sqrt{a} + \sqrt{b}$ for $a,b \geq 0$. 
\item
The forms $\e_t$ satisfy the above assumptions if and only if the adjoint forms $\e^*_t(f,g):=\e_t(g,f)$ satisfy them.
\item
If Assumption \ref{as:e_t}(iii) is satisfied with $C_4=0$, then Assumption \ref{as:p=0} is satisfied with $C_6=0$. To see this, apply Assumption \ref{as:e_t}(iii) to $\e^{\mbox{\tiny{skew}}}_t(f,f^{-1}g^2) = \e^{\mbox{\tiny{skew}}}_t(f,f (f^{-1}g)^2)$.
\end{enumerate}
\end{remark}

\begin{definition}
Suppose {\em Assumption 0} and {\em Assumption \ref{as:e_t}}(i), (ii) are satisfied.
\[ D(L_t) = \{ f \in \F : g \mapsto \e_t(f,g) \textrm{ is continuous w.r.t. } \Vert \cdot \Vert_{2} \textrm{ on } \F_{\mbox{\emph{\tiny{c}}}} \}. \]
For $f \in D(L_t)$, let $L_t f$ be the unique element in $L^2(X)$ such that
 \[ - \int L_t f g d\mu  = \e_t(f,g) \quad \textrm{ for all } g \in \F_{\mbox{\emph{\tiny{c}}}}. \]
Then we say that $(L_t,D(L_t))$ is the infinitesimal generator 
of $(\e_t,\F)$ on $X$.
\end{definition}
See, e.g., \cite[Section IV.2]{EE87}, \cite{MR92}, or \cite{Osh13}.
The proof of the following proposition is straight-forward.
\begin{proposition} \label{prop:lower bounded form}
Under {\em Assumption 0} and {\em Assumption \ref{as:e_t}}(i), (ii), there exist constants $\alpha,c \in (0,\infty)$, depending on $C_1,C_2,C_3$, such that
\begin{align} \label{eq:lower bounded form}
 \e_t(f,f) + \alpha \int f^2 d\mu \ge c \| f \|_{\F}^2, \quad \forall f \in \F, t \in \R,
 \end{align}
and $\e_t(f,g)+\alpha \int fgd\mu$ is a positivity preserving coercive closed form with domain $\F$. In particular,  the semigroup generated by $(L_t,D(L_t))$ is positivity preserving. When $C_2=C_3 = 0$, then we can take $\alpha = c = 0$.
\end{proposition}

\begin{remark} \label{rem:lower bounded form}
The lower boundedness \eqref{eq:lower bounded form} ensures the existence
of weak solutions to the heat equation $\frac{\partial}{\partial t} u = L_t u$ under Assumption 0 and Assumption \ref{as:e_t}(i), (ii). See \cite{LM72} for an abstract treatment of this matter.
\end{remark}

We will need the following Lemma for the proof of Lemma \ref{lem:estimate subsol p<2}.
Let $\varepsilon >0$ and set
\[ u_{\varepsilon} := u + \varepsilon. \]
\begin{lemma} \label{lem:e(1,)} 
Suppose {\em Assumption \ref{as:e_t}} is satisfied.
Let $p \in \R$. Let $\psi \in \F_{\mbox{\emph{\tiny{c}}}} \cap L^{\infty}(X,\mu)$ and $0 \leq u \in \F_{\mbox{\emph{\tiny{loc}}}}$. Suppose $u$ is locally bounded. 
Then for any $t \in \R$, $k \geq 1$,
\begin{align*}
 |\e_t(\varepsilon,u_{\varepsilon}^{p-1} \psi^2)| 
& \leq  \frac{4}{k} \int u_{\varepsilon}^p d\Gamma(\psi,\psi) 
+ \frac{(p-1)^2}{k} \int \psi^2 u_{\varepsilon}^{p-2} d\Gamma(u_{\varepsilon},u_{\varepsilon})  \\
& \quad + 2(C_2+C_3^{\frac{1}{2}}+C_5) k \int u_{\varepsilon}^p \psi^2 d\mu.
\end{align*}
\end{lemma}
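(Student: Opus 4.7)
The plan is to write $\e_t(\varepsilon,u_\varepsilon^{p-1}\psi^2)=\varepsilon\,\e_t(1,u_\varepsilon^{p-1}\psi^2)$ by bilinearity (interpreting $\e_t(1,v)$ for $v\in\F_{\mbox{\tiny{c}}}$ as $\e_t(f_K,v)$ for any cutoff $f_K\in\F\cap\mathcal C_{\mbox{\tiny{c}}}(X)$ with $f_K\equiv 1$ on a neighborhood of $\mathrm{supp}(v)$), to split $\e_t=\e_t^{\mbox{\tiny{sym}}}+\e_t^{\mbox{\tiny{skew}}}$, and to control each piece by the corresponding inequality in Assumption~\ref{as:e_t}. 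Throughout, the inequalities of Assumption~\ref{as:e_t} (stated for $\F\cap\mathcal C_{\mbox{\tiny{c}}}$) extend to $\F_{\mbox{\tiny{c}}}\cap L^\infty$ by density and by the continuity in Assumption~0(i); the hypothesis that $u$ is locally bounded together with Proposition~\ref{prop:F} then ensures $u_\varepsilon^{(p-1)/2}\psi\in\F_{\mbox{\tiny{c}}}\cap L^\infty$.

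For the symmetric part, by the symmetry of $\e_t^{\mbox{\tiny{sym}}}$ and the rescaling $f:=\sqrt{\varepsilon}\,u_\varepsilon^{(p-1)/2}\psi$, we have $\varepsilon\,\e_t^{\mbox{\tiny{sym}}}(1,u_\varepsilon^{p-1}\psi^2)=\e_t^{\mbox{\tiny{sym}}}(f^2,1)$. Applying Assumption~\ref{as:e_t}(ii) followed by $\sqrt{a+b}\le\sqrt{a}+\sqrt{b}$ produces a cross term $2\sqrt{C_2}\,\|f\|_2\bigl(\int d\Gamma(f,f)\bigr)^{1/2}$ plus a zero-order term $2\sqrt{C_3}\,\|f\|_2^2$; the cross term is absorbed by AM--GM in the form $2XY\le\tfrac{1}{2k}X^{2}+2kY^{2}$, giving $\tfrac{1}{2k}\int d\Gamma(f,f)+2kC_2\|f\|_2^{2}$. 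For the skew part, the same cutoff $f_K$ is used to apply Assumption~\ref{as:e_t}(iii) to $\e_t^{\mbox{\tiny{skew}}}(f_K,f_K g^2)$ with $g:=u_\varepsilon^{(p-1)/2}\psi$; since $f_K$ is constant on $\mathrm{supp}(g)$, the chain rule gives $d\Gamma(f_K,f_K)\equiv 0$ there, the $C_4$-term disappears, and one is left with
\[
|\e_t^{\mbox{\tiny{skew}}}(1,u_\varepsilon^{p-1}\psi^2)|\le 2\sqrt{C_5}\,\Bigl(\!\int d\Gamma(g,g)\Bigr)^{\!1/2}\Bigl(\!\int u_\varepsilon^{p-1}\psi^2\,d\mu\Bigr)^{\!1/2}.
\]
Multiplying by $\varepsilon$, redistributing $\varepsilon=\sqrt{\varepsilon}\cdot\sqrt{\varepsilon}$ inside the two square roots, and applying the same AM--GM with parameter $2k$ produces $\tfrac{\varepsilon}{2k}\int d\Gamma(g,g)+2kC_5\,\varepsilon\int u_\varepsilon^{p-1}\psi^2\,d\mu$.

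The remaining computation is to expand $\int d\Gamma(f,f)=\varepsilon\int d\Gamma(g,g)$ via the Leibniz-type inequality $\int d\Gamma(ab,ab)\le 2\int a^2\,d\Gamma(b,b)+2\int b^2\,d\Gamma(a,a)$ followed by the chain rule $d\Gamma(u_\varepsilon^{(p-1)/2},u_\varepsilon^{(p-1)/2})=\tfrac{(p-1)^2}{4}u_\varepsilon^{p-3}\,d\Gamma(u_\varepsilon,u_\varepsilon)$, and then to use the key pointwise inequality $0\le\varepsilon\le u_\varepsilon$ (which holds because $u\ge 0$) to replace $\varepsilon u_\varepsilon^{p-1}\le u_\varepsilon^p$ and $\varepsilon u_\varepsilon^{p-3}\le u_\varepsilon^{p-2}$. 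Summing the symmetric and skew estimates, the two $\tfrac{1}{2k}$-factors recombine to give $\tfrac{2}{k}\int u_\varepsilon^p\,d\Gamma(\psi,\psi)$ and $\tfrac{2}{k}\cdot\tfrac{(p-1)^2}{4}\int\psi^2 u_\varepsilon^{p-2}\,d\Gamma(u_\varepsilon,u_\varepsilon)$, while the zero-order coefficients add up to $2kC_2+2\sqrt{C_3}\,k+2kC_5=2(C_2+C_3^{1/2}+C_5)k$, using $k\ge 1$ to absorb $2\sqrt{C_3}\le 2\sqrt{C_3}\,k$. I expect the main difficulty to be not conceptual but arithmetic: choosing the AM--GM parameter as $2k$ rather than $k$ so that the two halves recombine cleanly into the stated $\tfrac{2}{k}$-factors, and placing the $\sqrt{\varepsilon}$-rescaling at precisely the right spot so that no stray $\varepsilon$ survives on the right-hand side.
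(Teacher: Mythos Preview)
Your argument is correct and follows essentially the same route as the paper: split $\e_t(\varepsilon,\cdot)=\varepsilon\,\e_t^{\mbox{\tiny{sym}}}(1,\cdot)+\varepsilon\,\e_t^{\mbox{\tiny{skew}}}(1,\cdot)$, apply Assumption~\ref{as:e_t}(ii) and (iii) with $f=1$ (the $C_4$-term vanishing since $d\Gamma(1,1)=0$), then use AM--GM, the Leibniz-type bound for $d\Gamma(u_\varepsilon^{(p-1)/2}\psi,\,\cdot)$, the chain rule, and $\varepsilon\le u_\varepsilon$. If anything, your bookkeeping is more explicit than the paper's: you track the AM--GM parameter $2k$ so that the two $\tfrac{1}{2k}$-contributions recombine to $\tfrac{2}{k}$, and you note where $k\ge 1$ is actually used (to absorb $2\sqrt{C_3}\le 2k\sqrt{C_3}$), whereas the paper derives the skew estimate in full but leaves the symmetric estimate at the stage ``can be estimated using the Cauchy--Schwarz inequality, the chain rule for $\Gamma$ and the fact that $\varepsilon\le u_\varepsilon$'' without writing out the constants.
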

\begin{proof} 
 Observe that 
\[ |\e_t(\varepsilon,u_{\varepsilon}^{p-1} \psi^2)| 
=  \varepsilon |\e_t^{\mbox{\tiny{skew}}}(1,u_{\varepsilon}^{p-1} \psi^2) 
               + \e_t^{\mbox{\tiny{sym}}}(1,u_{\varepsilon}^{p-1} \psi^2)|. \]
We apply Assumption \ref{as:e_t}(iii) and Remark \ref{rem:assumptions}(i) with $f = 1$ and $g = u_{\varepsilon}^{\frac{p-1}{2}} \psi$.
\begin{align*}
\varepsilon |\e_t^{\mbox{\tiny{skew}}}(1,u_{\varepsilon}^{p-1} \psi^2)|
& \leq 2 \left( \int \varepsilon d\Gamma( u_{\varepsilon}^{\frac{p-1}{2}} \psi, u_{\varepsilon}^{\frac{p-1}{2}} \psi) \right)^{\frac{1}{2}}
\left( C_5 \int \varepsilon u_{\varepsilon}^{p-1} \psi^2 d\mu \right)^{\frac{1}{2}} \\
& \leq \frac{2}{k}  \int \varepsilon u_{\varepsilon}^{p-1} d\Gamma(\psi,\psi) 
    + \frac{2}{k}  \int \varepsilon \psi^2 d\Gamma(u_{\varepsilon}^{\frac{p-1}{2}},u_{\varepsilon}^{\frac{p-1}{2}}) \\ 
& \quad  + k C_5 \int \varepsilon u_{\varepsilon}^{p-1} \psi^2 d\mu \\
& \leq  \frac{2}{k}  \int u_{\varepsilon}^p d\Gamma(\psi,\psi) 
      + \frac{2}{k} \frac{(p-1)^2}{4} \int \psi^2 u_{\varepsilon}^{p-2} d\Gamma(u_{\varepsilon},u_{\varepsilon}) \\
& \quad  + k C_5 \int u_{\varepsilon}^p \psi^2 d\mu,
\end{align*}
for any $k>0$. Here we used \eqref{eq:Gamma(fg)}, \eqref{eq:chain rule for Gamma}, and the fact that $\varepsilon  \leq u_{\varepsilon}$.
By Assumption \ref{as:e_t}(ii) and Remark \ref{rem:assumptions}(i) applied with $f = u_{\varepsilon}^{\frac{p-1}{2}} \psi$,
\begin{align*}
& \quad \varepsilon |\e_t^{\mbox{\tiny{sym}}}(1,u_{\varepsilon}^{p-1} \psi^2)| \\
& \leq 2 \left( \int \varepsilon u_{\varepsilon}^{p-1} \psi^2 d\mu \right)^{\frac{1}{2}} 
\left( C_2 \int \varepsilon \, d\Gamma( u_{\varepsilon}^{\frac{p-1}{2}} \psi, u_{\varepsilon}^{\frac{p-1}{2}} \psi) + C_3 \int \varepsilon u_{\varepsilon}^{p-1} \psi^2 d\mu \right)^{\frac{1}{2}}.
\end{align*}
Now the right hand side can be estimated using \eqref{eq:Gamma(fg)}, \eqref{eq:chain rule for Gamma}, and the fact that $\varepsilon  \leq u_{\varepsilon}$.
\end{proof}

\section{Solutions of the heat equation} 
Let $(\e,\F)$ be a reference form as in Section \ref{ssec:reference form}.
Let $(\e_t,\F)$, $t \in \R$, be a family of local bilinear forms.
Throughout, we assume that, for all
$f,g \in \F$, the map $t \mapsto \e_t(f,g)$ is measurable.
Further, we assume that $(\e_t,\F)$, $t \in \R$ satisfies Assumption 0.

We point out that Assumption 0, \ref{as:e_t}, \ref{as:p=0} are all satisfied in the following special case, which we will refer to as the {\em "symmetric strongly local case"}:
For each $t \in \R$, $(\e_t,\F)$ is a symmetric strongly local regular Dirichlet form and there exists a constant $C_1 \in [1,\infty)$ such that
\begin{align}  \label{eq:sym strongly local case}
C_1^{-1} \e(f,f) \le \e(f,f) \le C_1 \e_t(f,f), \quad \forall f \in \F, \forall t \in \R.
\end{align}

\subsection{Local very weak solutions}

Let $L^2_{\mbox{\tiny{loc}}}(I \to \F;U)$ be the space of all functions $u:I \times U \to \R$ such that for any open interval $J$ relatively compact in $I$, and any open subset $A$ relatively compact in $U$, there exists a function $u^{\sharp} \in L^2(I \to \F)$ such that $u^{\sharp} = u$ a.e.~in $J \times A$.

\begin{definition} \label{def:local very weak solution}
Let $I$ be an open interval and $U \subset X$ open. Set $Q = I \times U$. A function $u: Q \to \R$ is a \emph{local very weak solution} of the heat equation $\frac{\partial}{\partial t} u = L_t u$ in $Q$, if
\begin{enumerate}
\item
$u \in L^2_{\mbox{\em\tiny{loc}}}(I \to \F;U)$,
\item 
For almost every $a,b \in I$,
\begin{align} \label{eq:loc weak sol}
 \forall \phi \in \F_{\mbox{\emph{\tiny{c}}}}(U), \quad  \int u(b,\cdot)  \phi \, d\mu - \int u(a,\cdot)  \phi \, d\mu + \int_a^b \e_t(u(t,\cdot),\phi) dt = 0.
\end{align} 
\end{enumerate} 
\end{definition}

\begin{definition}
Let $I$ be an open interval and $U\subset X$ open. Set $Q = I \times U$. A function $u: Q \to \R$ is a \emph{local very weak subsolution} of $\frac{\partial}{\partial t} u = L_t u$ in $Q$, if
\begin{enumerate}
\item
$u \in L^2_{\mbox{\em \tiny{loc}}}(I \to \F;U)$,
\item 
For almost every $a,b \in I$ with $a < b$, and any non-negative $\phi \in \F_{\mbox{\emph{\tiny{c}}}}(U)$,
\begin{align} \label{eq:weak sol}
 \int u(b,\cdot)  \phi \, d\mu - \int u(a,\cdot)  \phi \, d\mu + \int_a^b \e_t(u(t,\cdot),\phi) dt
\leq 0.
\end{align}
\end{enumerate} 
We also write $\frac{\partial}{\partial t} u \leq L_t u$ very weakly in $Q$ to indicate that a function $u$ is a local very weak subsolution in $Q$.

A function $u$ is called a \emph{local very weak supersolution} if $-u$ is a local very weak subsolution.
\end{definition}

\begin{remark}
\begin{enumerate}
\item 
A local very weak solution may not have a weak time-derivative as defined in Section \ref{ssec:weak time-derivative}. 
Moreover, it may not be in $\mathcal C(\overline{I} \to L^2(X,\mu))$. In Section \ref{sec:PHI}, however, we will 
consider only local very weak solution that are contained in $\mathcal C(\overline{I} \to L^2(X,\mu))$. 
\item 
In Section \ref{ssec:weak solutions} we recall the notion of local weak solutions which involves weak time-derivatives. We show in Proposition \ref{prop:weak and very weak} the relation between local very weak solutions and local weak solutions. 
\end{enumerate}
\end{remark}

\begin{proposition} \label{prop:|u| is subsolution}
If $u$ is a local very weak solution of $\frac{\partial}{\partial t} u = L_t u$ in $Q = I \times U$ then $|u|$ is a (non-negative) local very weak subsolution of $\frac{\partial}{\partial t} u = L_t u$ in $Q$. 
\end{proposition}

Proposition \ref{prop:|u| is subsolution} is proved in Section \ref{sec:proofs}.

It may not be true that the absolute value of a local weak solution is a local weak subsolution. We do not have an analog of Proposition \ref{prop:weak and very weak} for {\em sub}solutions.

For an open relatively compact time interval $I$, let $L^2(I \to \F)$ be the Hilbert space of those functions $v: I \to \F$ such that 
 \[  \Vert v \Vert_{L^2(I \to \F)} := \left( \int_I \Vert v(t) \Vert_{\F}^2 \, dt \right)^{1/2} < \infty. \]

The next proposition is a generalization of \cite[Theorem 2.14(i)]{FOT94} to time-dependent functions. Though it applies to any $u \in L^2(I \to \F)$, we will mainly
use it to approximate sub- and supersolutions.

\begin{proposition} \label{prop:dt-qe convergence}
Consider a Cauchy sequence $(u_n)$ in $L^2(I \to \F)$. Then there exists $u \in L^2(I \to \F)$ and a subsequence $(u_{n_k})$ such that, for almost every $t \in I$, $\widetilde{u_{n_k}}(t,\cdot) \to \widetilde{u}(t,\cdot)$ quasi-everywhere, and $u_{n_k} \to u$ in $L^2(I \to \F)$.
\end{proposition}

\begin{proof}
Choose a subsequence $(u_{n_k})$ such that 
\[ \int_I || u_{n_{k+1}}(t,\cdot) - u_{n_k}(t,\cdot) ||_{\F}^2 dt \leq 2^{-3k} \]
for all $k$.
Set
\[ E^t_k := \left\{ x \in X : |\widetilde{u_{n_{\nu+1}}}(t,x) - \widetilde{u_{n_{\nu}}}(t,x)| > 2^{-\nu} \mbox{ for some } \nu \geq k \right\}. \]
Clearly, $E^t_k \supset E^t_{k+1}$. For some $\nu \geq k$,
\[ \mbox{Cap}(E^t_k) \leq \big(2^{\nu} ||u_{n_{\nu+1}}(t,\cdot) - u_{n_{\nu}}(t,\cdot)||_{\F} \big)^2, \]
and,
\begin{align*}
2^{2\nu} \int_I ||{u_{n_{\nu+1}}}(t,\cdot) - {u_{n_{\nu}}}(t,\cdot)||_{\F}^2 dt  \leq 2^{-k}.
\end{align*}
As $k \to \infty$, $\int_I \mbox{Cap}(E^t_k) dt \to 0$. Thus, passing again to a subsequence, we find that, for almost every $t \in I$,
\[ 2^{2\nu} ||u_{n_{\nu+1}}(t,\cdot) - u_{n_{\nu}}(t,\cdot)||_{\F}^2 \longrightarrow 0, \]
hence
$\mbox{Cap}(E^t_k) \to 0$ as $k \to \infty$. 
For any such $t$, let
 $F^t_k := X \setminus E^t_k$. Then $(F^t_k)$ is an $\e$-nest. For any $x \in F^t_k$ and any $l,m > N \geq k$, we have
\[ |\widetilde{u_{n_l}}(t,x) - \widetilde{u_{n_m}}(t,x)| 
\leq \sum_{\nu=N+1}^{\infty} |\widetilde{u_{n_{\nu}}}(t,x) - \widetilde{u_{n_{\nu+1}}}(t,x)|  
\leq 2^{-N}. \]
That is, $\widetilde{u_{n_l}}(t,\cdot)$ converges uniformly on each $F^t_k$, hence quasi-everywhere on $X$. Let $\widetilde{u}(t,\cdot)$ be the limit function. As in \cite[Proof of Theorem 2.1.4(i)]{FOT94} it can be proved that $\widetilde{u}(t,\cdot)$ is indeed quasi-continuous: For any $\epsilon >0$, let $k$ be large enough so that $\mbox{Cap}(E^t_k) < \epsilon /2$. By \cite[Theorem 2.1.2(i)]{FOT94}, there exists an open subset $G^t_k \subset X$ such that $\mbox{Cap}(G^t_k) < \epsilon /2 $ and, for every $l \geq k$, $\widetilde{u_{n_l}}(t,\cdot)$ is continuous on $X \setminus G^t_k$. Let $G^t := E^t_k \cup G^t_k$. Then $\mbox{Cap}(G^t) < \epsilon$ and $\widetilde{u_{n_k}}(t,\cdot)$ converges to $\widetilde u(t,\cdot)$ uniformly on $X \setminus G^t$. Hence, $\widetilde{u}(t,\cdot)$ is a quasi-continuous version of some function $u(t,\cdot) \in \F$. 

Since $u_{n_k}$ is by assumption a Cauchy sequence in the Hilbert space $L^2(I \to \F)$, passing again to a subsequence we obtain that $u_{n_k}(t,\cdot)$ converges in $\F$, at almost every $t \in I$. Its limit in $\F$ must be the same as the pointwise (in $x$) limit, so
 $||u_{n_k}(t,\cdot) - u(t,\cdot)||_{\F} \to 0$ as $k \to \infty$, at almost every $t \in I$. Being a Cauchy sequence in the Hilbert space
 $L^2(I \to \F)$, the sequence $(u_{n_k})$ is convergent. Its limit must be the same as its pointwise (in $t$) limit $u$.
 That is, $u \in L^2(I \to \F)$ and $u_{n_k} \to u$ in $L^2(I \to \F)$.
\end{proof}

\subsection{Steklov averages}

Let $d$ be a metric on $X$ inducing the original topology, and assume that open balls $B(x,r) = \{ y \in X: d(x,y) < r \}$ are relatively compact.

\begin{definition}
Let $I :=(a,b)  \subset \R$. For any $f \in L^1(I \to \F)$ and $h \in (0,b-a)$, the \emph{Steklov average} of $f$ is defined as
\[  f_h(t) := \frac{1}{h} \int_t^{t+h} f(s) ds, \quad t \in (a,b-h), \]
where the right hand side is a Bochner integral in the Banach space $(\F,|| \cdot ||_{\F})$. 
\end{definition}

\begin{lemma} \label{lem:steklov integrated}
Let $I :=(a,b)  \subset \R$ and $h \in (0,b-a)$. Let $f:I \to [0,\infty)$ be Lebesgue integrable.
Then
\[ \int_a^{b-h} f_h(t) \, dt
\leq h f_h(a) + h f_h(b-h) + \int_{a+h}^{b-h} f(t) dt.\]
\end{lemma}

\begin{proof}
Integration by parts yields
\begin{align*}
 \int_a^{b-h} f_h(t) dt
& = 
(b-h) f_h(b-h) - a f_h(a) - \int_{a}^{b-h} \frac{t}{h} [f(t+h) - f(t) ] dt \\
& = 
(b-h) f_h(b-h) - a f_h(a) + \int_a^{a+h} \frac{t}{h} f(t) dt \\
& \quad - \int_{b-h}^b \frac{t-h}{h} f(t) dt
+ \int_{a+h}^{b-h} \frac{t-(t-h)}{h} f(t) dt \\
& \leq 
h f_h(a) + h f_h(b-h) + \int_{a+h}^{b-h} f(t) dt.
\end{align*}
In the last line we used the assumption that $f$ is a non-negative function.
\end{proof}

\begin{lemma} \label{lem:steklov convergence}
Let $I :=(a,b)  \subset \R$. For any $f \in L^1(I \to \F)$, its Steklov average $f_h$ is in $L^2((a,b-h) \to \F)$. For almost every $t \in I$, 
\[ \lim_{h\to 0} \frac{1}{h} \int_t^{t+h} ||f(t) - f(s)||_{\F} \, ds = 0, \]
and
\[ \lim_{h\to 0} || f_h(t) - f(t) ||_{\F} = 0. \]
In particular, for any $\epsilon \in (0,b-a)$, the Steklov average $f_h$ converges to $f$ in $L^1((a,b-\epsilon) \to \F)$ as $h \to 0$.
\end{lemma}

\begin{proof}
See \cite[Theorem 9]{DiestelUhl77}. The convergence in $L^1((a,b-\epsilon) \to \F)$ follows from the pointwise convergence and the dominated convergence theorem. Indeed, we have
\[  \int_a^{b-\epsilon} \| f_h(t) \|_{\F}  \, dt \le \int_a^{b-\epsilon} ( \| f(\cdot) \|_{\F})_h(t) \, dt, \]
and the right hand side can be bounded from above by applying Lemma \ref{lem:steklov integrated}, hence $ (f_h)_{h \ge 0}$ is a bounded sequence in $L^1((a,b-\epsilon) \to \F)$.
\end{proof}

\begin{corollary} \label{cor:steklov convergence uniformly in t}
Let $I :=(a,b)  \subset \R$. For any $f \in \mathcal C(\overline{I} \to L^2(X,\mu))$,
\[ \lim_{h\to 0} \sup_{t \in I} || f_h(t) - f(t) ||_{L^2(X)} = 0. \]
\end{corollary}

\begin{corollary} \label{cor:steklov convergence in L^2}
Let $I :=(a,b)  \subset \R$ and $\epsilon \in (0,b-a)$. For any $f \in L^2(I \to \F)$, 
\[  \lim_{h \to 0} \int_a^{b-\epsilon} \frac{1}{h} \int_t^{t+h} || f(s) - f(t)  ||_{\F}^2 \, ds \, dt = 0. \]
In particular, the Steklov average $f_h$ is in $L^2((a,b-\epsilon) \to \F)$ and converges to $f$ in $L^2((a,b-\epsilon) \to \F)$ as $h \to 0$.
\end{corollary}

\begin{proof}
By Lemma \ref{lem:steklov convergence}, $\frac{1}{h} \int_t^{t+h} || f(s) - f(t)  ||_{\F}^2 \, ds$ converges to $0$ pointwise at almost every $t \in (a,b-\epsilon)$. By the triangle inequality 
\[ \frac{1}{h} \int_t^{t+h} || f(s) - f(t)  ||_{\F}^2 \, ds 
\leq \frac{2}{h} \int_t^{t+h} ||f(s)||_{\F}^2 ds + 2||f(t)||_{\F}^2, \]
hence, by Lemma \ref{lem:steklov integrated},
\[ \int_a^{b-\epsilon} \frac{1}{h} \int_t^{t+h} || f(s) - f(t)  ||_{\F}^2 \, ds \, dt \leq 6 \int_I ||f(t)||_{\F}^2 dt \quad \mbox { for all } h \in (0,\epsilon). \]
Thus, the first assertion follows by the dominated convergence theorem.
By Jensen's inequality,
\[ ||f_h(t) -  f(t)||_{\F}^2 \leq \frac{1}{h} \int_t^{t+h} || f(s) - f(t)  ||_{\F}^2 \, ds, \]
hence the second assertion follows.
\end{proof}

\subsection{Estimates for local very weak subsolutions and supersolutions} \label{ssec:estim for sub and supsol}

Fix parameters $\tau > \tau' > 0$. Let $B=B(x,r) \subset X$, $a \in \R$. For $\sigma \in (0,1)$, set
\begin{align*}
\sigma B & = B(x,\sigma r), \\
            I &= (a - \tau r^2, a + \tau r^2), \\
 I^-_{\sigma} &= (a - \sigma \tau r^2, a + \tau' r^2), \\
I^+_{\sigma}  &= (a - \tau' r^2, a + \sigma \tau r^2), \\
Q &=  Q(\tau,x,a,r) = I \times B(x,r), \\
 Q^-_{\sigma} &= I^-_{\sigma} \times \sigma B, \\
 Q^+_{\sigma} &= I^+_{\sigma} \times \sigma B.
\end{align*}
The parameter $\tau'$ is introduced to make sure that functions which are locally $L^2$-integrable over $I$ can be integrated over $ I^-_{\sigma}$ or $ I^+_{\sigma}$.

Let $0 < \sigma' < \sigma < 1$ and $\omega = \sigma - \sigma'$.
Let $\psi \in \F_{\mbox{\tiny{c}}}(B) \cap \mathcal{C}_{\mbox{\tiny{c}}}(B)$ be such that $0 \leq \psi \leq 1$, $\textrm{supp}(\psi) \subset \sigma B$, $\psi = 1$ in $\sigma'B$, and $d\Gamma(\psi,\psi) \leq 4 (\omega r)^{-2} d\mu$.
Let $\chi$ be a smooth function of the time variable $t$ such that $0 \leq \chi \leq 1$,  $\chi = 0 \textrm{ in } (-\infty, a - \sigma \tau r^2)$, $\chi = 1$ in $(a - \sigma' \tau r^2, \infty)$ and $0 \leq \chi' \leq 2/(\omega \tau r^2)$. Let $d\bar \mu = d\mu \times dt$. Recall that for $0 \leq u \in \F_{\mbox{\tiny{loc}}}$ we set $u_n = u \wedge n$.

\begin{theorem} \label{thm:estimate subsol p>2}
Suppose {\em Assumption \ref{as:e_t}} is satisfied. Let $p \geq 2$. Let $u$ be a non-negative local very weak subsolution of the heat equation for $L_t$ in $Q$, that is, $\frac{\partial}{\partial t}u \leq L_t u$ very weakly in $Q$.  Suppose $\int_{I^-_{\sigma}} \int_{\sigma B} u^p d\mu dt < \infty$. Then there are $a_1=a_1(C_1) \in (0,1)$, and $A_1, A_2 \in [0,\infty)$ depending on $C_1$ - $C_5$ such that
\begin{align} \label{eq:subsol}
& \sup_{t \in I^-_{\sigma'}} \int u^p \psi^2 d\mu + a_1 \int_{I^-_{\sigma'}} \int \psi^2 d\Gamma(u^{p/2},u^{p/2}) dt \nonumber \\
\leq & \, A_1(p^2+1) \int_{Q_{\sigma}^-} u^p d\Gamma(\psi,\psi) dt  \nonumber \\
& + \left( A_2(C_2+C_3^{1/2}+C_5) + \frac{2}{\omega\tau r^2} \right) (p^2+1) \int_{Q_{\sigma}^-} u^p \psi^2 d\mu \, dt.
\end{align}
\end{theorem}

The above theorem, as well as the following lemmas are proved in Section \ref{sec:proofs}.

\begin{lemma} \label{lem:estimate subsol p<2}
Suppose {\em Assumption \ref{as:e_t}} is satisfied. Let $p \in (1+\eta,2]$ for some small $\eta >0$. Let $u$ be a non-negative local very weak subsolution of the heat equation for $L_t$ in $Q$, that is, $\frac{\partial}{\partial t}u \leq L_t u$ very weakly in $Q$. Suppose that $u$ is locally bounded. Then there are $a_1 =a_1(C_1) \in (0,1)$, and $A_1, A_2 \in [0,\infty)$ depending on $C_1$--$C_5$ and $\eta$ such that
\begin{align} \label{eq:subsol p<2}
& \sup_{t \in I^-_{\sigma'}} \int u^p \psi^2 d\mu + a_1 \int_{I^-_{\sigma'}} \int \psi^2 d\Gamma(u^{p/2},u^{p/2}) dt \nonumber \\
\leq & \, A_1(p^2+1) \int_{Q_{\sigma}^-} u^p d\Gamma(\psi,\psi) dt \nonumber \\
& + \left( A_2(C_2+C_3^{1/2}+C_5) + \frac{2}{\omega\tau r^2} \right) (p^2+1) \int_{Q_{\sigma}^-} u^p \psi^2 d\mu \, dt.
\end{align}
\end{lemma}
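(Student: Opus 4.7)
The proof follows the same scheme as Lemma \ref{lem:estimate subsol p>2}, but the negative exponent $p-2<0$ forces us to work with $u_\varepsilon=u+\varepsilon$ ($\varepsilon>0$) in place of $u$ and with the truncation $u_n=u_\varepsilon\wedge n$ so that $u_n^{p-2}$ is integrable. Since $u$ is locally bounded, we may eventually take $n$ larger than $\varepsilon+\sup_{Q^-_\sigma}u$, ensuring $u_n=u_\varepsilon$ on the support of $\psi$, and then pass to the limits $n\to\infty$ and $\varepsilon\to 0$.

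\textbf{Step 1.} Test the weak subsolution inequality against $\phi=u_\varepsilon u_n^{p-2}\chi\psi^2\in\F_{\mbox{\tiny{c}}}(Q)$; this test function is admissible by Lemma \ref{lem:u u_n^p} together with the boundedness of $u$. With a function $\mathcal H(u_\varepsilon)$ defined as in the proof of Lemma \ref{lem:estimate subsol p>2}, the identity $\partial_t u_\varepsilon\cdot u_\varepsilon u_n^{p-2}=\partial_t\mathcal H(u_\varepsilon)$ on $\{u_\varepsilon\le n\}$ leads to
\begin{align*}
\int_J\!\int \tfrac{\partial}{\partial t}\bigl(\chi\mathcal H(u_\varepsilon)\psi^2\bigr)\,d\bar\mu
\le -\int_J\e_t(u,u_\varepsilon u_n^{p-2}\chi\psi^2)\,dt + \int_J\!\int \chi'\,\mathcal H(u_\varepsilon)\psi^2\,d\bar\mu.
\end{align*}

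\textbf{Step 2.} Write $\e_t(u,\cdot)=\e_t(u_\varepsilon,\cdot)-\e_t(\varepsilon,\cdot)$ and decompose $\e_t(u_\varepsilon,\cdot)$ into strictly-local-symmetric, skew, and zero-order-symmetric parts. Apply Lemma \ref{lem:SUP sym2} to the first, Lemma \ref{lem:skew identity with p} together with Assumption \ref{as:e_t}(iii) to the second, and Assumption \ref{as:e_t}(ii) to the third, exactly as in the proof of Lemma \ref{lem:estimate subsol p>2}. Use Lemma \ref{lem:e(1,)} to control the additional term $|\e_t(\varepsilon,u_\varepsilon u_n^{p-2}\chi\psi^2)|$; this introduces the $C_3^{1/2}$ dependence in the final estimate.

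\textbf{Step 3.} Collect all terms and choose the auxiliary constants $k_1,\dots,k_5$ in Lemmas \ref{lem:SUP sym2}, \ref{lem:skew identity with p}, and \ref{lem:e(1,)} so that every $\int \psi^2 u_\varepsilon^{p-2}d\Gamma(u_\varepsilon,u_\varepsilon)$-type term on the right is absorbed by the good energy term on the left, whose coefficient is proportional to $p-1$. Let $n\to\infty$ by monotone convergence using the local boundedness of $u$, and then $\varepsilon\to 0$ by dominated convergence, to deduce \eqref{eq:subsol p<2}.

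\textbf{Main obstacle.} Unlike the case $p\ge 2$, the term $(1-p)(p-2)\int\psi^2 u_n^{p-2}d\Gamma(u_n,u_n)$ produced by Lemma \ref{lem:SUP sym2} now has the \emph{wrong} sign, and the positive LHS energy term that survives the algebra carries a coefficient proportional to $p-1$, which would degenerate as $p\to 1$. The hypothesis $p>1+\eta$ is precisely what keeps this coefficient bounded below, so the absorption argument can be closed with constants $a_1,A_1,A_2,A_3$ depending on $\eta$ (in addition to $C_1,\dots,C_5$). The principal technical work lies in carrying out this absorption uniformly in $\varepsilon$ and $n$, so that the two limits can be taken cleanly.
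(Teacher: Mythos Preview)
Your proposal is correct and follows essentially the same route as the paper. The only cosmetic difference is that the paper, exploiting local boundedness from the outset, tests directly against $u_\varepsilon^{p-1}\chi\psi^2$ (so that $\tfrac1p u_\varepsilon^p$ replaces $\mathcal H(u_\varepsilon)$) and never introduces the truncation $u_n$; since you yourself observe that $u_n=u_\varepsilon$ on $\mathrm{supp}\,\psi$ once $n$ is large, your extra step is harmless and the two arguments coincide.
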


\begin{lemma} \label{lem:estimate supsol}
Suppose {\em Assumption \ref{as:e_t}} is satisfied. Let $0 \neq p \in (-\infty, 1-\eta)$ for some small $\eta > 0$. Let $u \in \F_{\mbox{\emph{\tiny{loc}}}}(Q)$ be a 
non-negative local very weak supersolution of the heat equation for $L_t$ in $Q$, that is, $\frac{\partial}{\partial t}u \geq L_t u$ very weakly in $Q$. Suppose that $u$ is locally bounded. 
Let $u_{\varepsilon}:= u + \varepsilon$ for $\varepsilon>0$.
\begin{enumerate}
\item
If $p<0$, there are $a_1=a_1(C_1) \in (0,1)$, and $A_1, A_2 \in [0,\infty)$ depending on $C_1$ - $C_5$ such that
\begin{align} \label{eq:supsol p<0}
& \sup_{t \in I^-_{\sigma'}} \int u_{\varepsilon}^p \psi^2 d\mu + a_1 \int_{I^-_{\sigma'}} \int \psi^2 d\Gamma(u_{\varepsilon}^{p/2},u_{\varepsilon}^{p/2}) dt \nonumber \\
\leq & \, A_1(p^2+1) \int_{Q_{\sigma}^-} u_{\varepsilon}^p d\Gamma(\psi,\psi) dt \nonumber \\
& + \left( A_2(C_2 + C_3^{1/2}+C_5) + \frac{2}{\omega\tau r^2} \right) (p^2+1) \int_{Q_{\sigma}^-} u_{\varepsilon}^p \psi^2 d\mu \, dt.
\end{align}
\item
If $p\in (0,1-\eta)$, there are $a_1=a_1(C_1) \in (0,1)$, and $A_1, A_2 \in [0,\infty)$ depending on $C_1$ - $C_5$, $\eta$ such that
\begin{align} \label{eq:supsol 0<p<1}
& \sup_{t \in I^+_{\sigma'}} \int u_{\varepsilon}^p \psi^2 d\mu + a_1 \int_{I^+_{\sigma'}} \int \psi^2 d\Gamma(u_{\varepsilon}^{p/2},u_{\varepsilon}^{p/2}) dt \nonumber \\
\leq & \, A_1(p^2+1) \int_{Q_{\sigma}^+} u_{\varepsilon}^p d\Gamma(\psi,\psi) dt \nonumber \\
& + \left( A_2(C_2 + C_3^{1/2} + C_5) + \frac{2}{\omega\tau r^2} \right) (p^2+1) \int_{Q_{\sigma}^+} u_{\varepsilon}^p \psi^2 d\mu \, dt.
\end{align}
\end{enumerate}
\end{lemma}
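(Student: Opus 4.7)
The plan is to mirror the proof of Lemma \ref{lem:estimate subsol p<2}, reversing the direction of the parabolic inequality (since $u$ is a supersolution) and handling the two sign regimes for $p$. Because $u$ is locally bounded and $u_\varepsilon = u + \varepsilon \geq \varepsilon > 0$, the function $u_\varepsilon^{p-1}$ is locally bounded for every $p \in \R$; together with Lemma \ref{lem:u u_n^p} and the cutoffs $\chi,\psi$, this ensures that $\phi := u_\varepsilon^{p-1} \chi \psi^2$ is a valid non-negative test function in $\F_{\mbox{\tiny{c}}}(Q)$, and no further $n$-truncation is needed.

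First I would plug $\phi$ into the supersolution inequality $\int \partial_t u \cdot \phi \, d\bar\mu + \int \e_t(u, \phi)\, dt \geq 0$. Using $\partial_t u \cdot u_\varepsilon^{p-1} = \frac{1}{p} \partial_t u_\varepsilon^p$ this gives
\[
\frac{1}{p} \int_J \int \partial_t u_\varepsilon^p \cdot \chi \psi^2 \, d\bar\mu \;\geq\; - \int_J \e_t(u, u_\varepsilon^{p-1} \chi \psi^2) \, dt.
\]
For case (i) ($p<0$), multiplying by $p$ flips the inequality; I would pick $\chi$ supported in $[s - \sigma\tau r^2, s]$ with $\chi = 1$ on $I_{\sigma'}$ and $0 \leq \chi' \leq 2/(\omega\tau r^2)$, and integrate $\partial_t(u_\varepsilon^p \chi \psi^2)$ up to $t_0 \in I_{\sigma'}$ to obtain
\[
\int u_\varepsilon^p(t_0, \cdot) \psi^2 \, d\mu \;\leq\; -p \int_{J_{t_0}} \e_t(u, u_\varepsilon^{p-1} \chi \psi^2) \, dt + \int_{J_{t_0}} u_\varepsilon^p \chi' \psi^2 \, d\bar\mu.
\]
For case (ii) ($0 < p < 1-\eta$), I would instead take $\chi = 1$ on $I_{\sigma'}^+$, $\chi = 0$ near $t = s$, $\chi' \leq 0$, $|\chi'| \leq 2/(\omega\tau r^2)$, and integrate from $t_0 \in I_{\sigma'}^+$ up to $s$; after negation this produces an inequality of the same form with $-p$ replaced by $p > 0$ and $|\chi'|$ in place of $\chi'$. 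In both cases we are reduced to bounding $|p|\cdot\int \e_t(u, u_\varepsilon^{p-1} \chi \psi^2)\, dt$ from above.

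Next I would decompose $\e_t(u, \phi) = \e_t(u_\varepsilon, \phi) - \e_t(\varepsilon, \phi)$ and further split $\e_t(u_\varepsilon, \phi) = \e_t^{\mbox{\tiny{s}}}(u_\varepsilon, \phi) + \e_t^{\mbox{\tiny{sym}}}(u_\varepsilon \phi, 1) + \e_t^{\mbox{\tiny{skew}}}(u_\varepsilon, \phi)$. The four pieces are estimated by: Lemma \ref{lem:SUP sym2} together with Assumption \ref{as:e_t}(i) for the strictly local symmetric part, where the favorable sign $1-p > 0$ makes the coefficient of the good term $\int \psi^2 u_\varepsilon^{p-2} d\Gamma(u_\varepsilon,u_\varepsilon)$ positive after one rearrangement; Assumption \ref{as:e_t}(ii) and Cauchy--Schwarz for $|\e_t^{\mbox{\tiny{sym}}}(u_\varepsilon^p \chi\psi^2,1)|$; Lemma \ref{lem:skew identity with p} and Assumption \ref{as:e_t}(iii) for $\e_t^{\mbox{\tiny{skew}}}(u_\varepsilon, u_\varepsilon^{p-1} \chi \psi^2)$; and Lemma \ref{lem:e(1,)} for the correction $|\e_t(\varepsilon, u_\varepsilon^{p-1} \chi \psi^2)|$. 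Choosing the free parameters $k_1,\ldots,k_5$ in each bound large enough so that all bad gradient contributions are absorbed by a fixed fraction of the good term $\int \psi^2 d\Gamma(u_\varepsilon^{p/2},u_\varepsilon^{p/2})$, then taking the supremum over $t_0$, yields \eqref{eq:supsol p<0} and \eqref{eq:supsol 0<p<1}. The $\eta$-gap in case (ii) is what keeps the absorption constants bounded.

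The main obstacle is the bookkeeping of the two sign regimes in parallel --- the choice of time cutoff $\chi$, the direction of integration, and whether the energy is multiplied by $p$ or $-p$ --- together with carrying along the correction term $\e_t(\varepsilon,\cdot)$ which did not appear in Lemma \ref{lem:estimate subsol p>2} because there only non-negative powers of $u$ entered the test function. Once Lemma \ref{lem:e(1,)} is used to reduce this correction to an absorbable gradient term plus a zero-order $\int u_\varepsilon^p \psi^2 d\mu$ contribution with a uniformly bounded coefficient, the rest of the computation is a line-by-line transcription of the proof of Lemma \ref{lem:estimate subsol p<2}.
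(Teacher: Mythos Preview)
Your proposal is correct and follows essentially the same strategy as the paper: choose the time cutoff $\chi$ according to the sign of $p$ (increasing for $p<0$, decreasing for $0<p<1-\eta$), plug the non-negative test function $u_\varepsilon^{p-1}\chi\psi^2$ into the supersolution inequality, split the form into its strictly local symmetric, zero-order symmetric, and skew parts, and estimate each piece using Lemma~\ref{lem:SUP sym2} with Assumption~\ref{as:e_t}(i), Assumption~\ref{as:e_t}(ii), and Lemma~\ref{lem:skew identity with p} with Assumption~\ref{as:e_t}(iii), absorbing the bad gradient terms via free parameters. The only organizational difference is that you handle the full correction $\e_t(\varepsilon,\cdot)$ in one stroke via Lemma~\ref{lem:e(1,)} (as in the proof of Lemma~\ref{lem:estimate subsol p<2}), whereas the paper splits this correction into its skew and symmetric pieces and folds them into the respective skew and symmetric estimates; both routes lead to the same bounds.
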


\section{Main results} \label{sec:PHI}
\subsection{Harnack type spaces} \label{ssec:assumptions on the space}

The \emph{intrinsic distance} $d := d_{\e}$ induced by $(\e,\F)$ is defined as
\[ d_{\e}(x,y) := \sup \big\{ f(x)-f(y): f \in \F_{\mbox{\tiny{loc}}} \cap \mathcal{C}(X), \, d\Gamma(f,f) \leq d\mu \big\},  \]
for all $x,y \in X$, where $\mathcal{C}(X)$ is the space of continuous functions on $X$.
Consider the following properties of the intrinsic distance that may or may not be satisfied. They are discussed in \cite{Stu95geometry, SturmI}.
\begin{align}
& \mbox{The intrinsic distance } d  \mbox{ is finite everywhere and defines } \nonumber\\
& \mbox{the original topology of } X. \tag{A1} \\
& (X,d) \mbox{ is a complete metric space. } \tag{A2}
\end{align}
Note that if (A1) holds true, then (A2) is by \cite[Theorem 2]{Stu95geometry} equivalent to 
\begin{align}
\forall x \in X, r > 0, \textrm{ the open ball } B(x,r) \mbox{ is relatively compact in } (X,d). \tag{A2'}
\end{align}
Moreover, (A1)--(A2) imply that $(X,d)$ is a geodesic space, i.e.~any two points in $X$ can be connected by a minimal geodesic in $X$. See \cite[Theorem 1]{Stu95geometry}.
If (A1) and (A2) hold true, then by \cite[Proposition 1]{SturmI},
\[  d_{\e}(x,y) = \sup \big\{ f(x)-f(y): f \in \F \cap \mathcal{C}_{\mbox{\tiny{c}}}(X), \, d\Gamma(f,f) \leq d\mu \big\}, \quad x,y \in X. \]
It is sometimes sufficient to consider property (A2) only on an open, connected subset $Y$ of $X$, that is,
\begin{align}
\textrm{ For any ball } B(x,2r) \subset Y, B(x,r) \textrm{ is relatively compact.} \tag{A2-$Y$}
\end{align}
Note that  an open set $Y$ such that $\overline{Y}$ is complete in $(X,d)$
automatically satisfies (A2-$Y$), see, e.g., \cite[Lemma 1.1(i)]{SturmIII}.

\begin{definition}
$(\e,\F)$ satisfies the \emph{volume doubling property} on $Y$, if there exists a constant $D_Y \in (0,\infty)$ such that for every ball $B(x,2r) \subset Y$,
\begin{align} \label{eq:VD}
V(x,2r) \leq D_Y \, V(x,r), \tag{VD}
\end{align}
where $V(x,r) = \mu( B(x,r))$ denotes the volume of $B(x,r)$.
\end{definition}

\begin{definition}
$(\e,\F)$ satisfies the \emph{Poincar\'e inequality} on $Y$ if there exists a constant $P_Y \in (0,\infty)$ such that for any ball $B(x,2r) \subset Y$,
\begin{align} \label{eq:PI}
\forall f \in \F, \  \int_{B(x,r)} |f - f_B|^2 d\mu  \leq  P_Y \, r^2 \int_{B(x,2r)} d\Gamma(f,f), \tag{PI}
\end{align}
where $f_B = \frac{1}{V(x,r)} \int_{B(x,r)} f d\mu$ is the mean of $f$ over $B(x,r)$. 
\end{definition}

\begin{definition}
$(\e,\F)$ satisfies the localized \emph{Sobolev inequality} on $Y$ if there exist constants $\nu > 2$ and $S_Y > 0$ such that for any ball $B=B(x,r)$ with $B(x,2r) \subset Y$, we have
\begin{align}  \label{eq:sobolev}
 \left( \int_B |f|^{\frac{2\nu}{\nu-2}} d\mu \right)^{\frac{\nu-2}{\nu}}   
    \leq  S_Y \frac{ r^2 }{ V(x,r)^{2 / \nu} }  \left( \int_B d\Gamma(f,f) + r^{-2} \int_B f^2 d\mu \right),
\end{align}
for all $f \in \F_{\mbox{\emph{\tiny{c}}}}(B)$.
\end{definition}

\begin{theorem}\textbf{\em (\cite[Theorem 2.6]{SturmIII})}
Let $(X,\mu,\e,\F)$ be as above and $Y \subset X$. If \emph{(A1), (A2-$Y$)}, volume doubling and Poincar\'e inequality hold on $Y$, then the localized Sobolev inequality holds on $Y$.
\end{theorem}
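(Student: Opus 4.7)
The plan is to follow the now-classical route to scale-invariant Sobolev inequalities from doubling plus Poincar\'e, as developed in \cite{SC92, SC95} for Riemannian manifolds and extended to local Dirichlet spaces by Biroli-Mosco \cite{BM95} and Sturm \cite{SturmI,SturmII,SturmIII}. The adaptation to the localized setting on $Y$ rather than the whole of $X$ amounts to tracking the scales of the balls involved in the chaining arguments.

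First I would extract a Sobolev dimension $\nu > 2$ from (VD). Iterating the doubling hypothesis yields, for all $y \in B(x,R)$ and $r \le R$ with $B(x,2R) \subset Y$,
$$
\frac{V(x,R)}{V(y,r)} \le C_Y \left( \frac{R}{r} \right)^{\nu},
$$
with $\nu = \log_2 D_Y$, increased if needed so that $\nu > 2$. The chaining of balls used here requires (A1) and (A2-$Y$), which give $Y$ a local geodesic structure in the sense of \cite{Stu95geometry}.

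Second, combining (PI) with (VD) through a Whitney-type covering and a telescoping argument, I would establish a local Nash (equivalently, Faber-Krahn) inequality: for every $f \in \F_{\mbox{\tiny{c}}}(B(x,r))$ with $B(x,2r) \subset Y$,
$$
\|f\|_2^{2+4/\nu} \le C \, \frac{r^2}{V(x,r)^{2/\nu}} \left( \int d\Gamma(f,f) + r^{-2} \int f^2 \, d\mu \right) \|f\|_1^{4/\nu}.
$$
All balls invoked in the chain have radii bounded by a fixed multiple of $r$, so the requirement $B(x,2r) \subset Y$ is enough to keep the entire argument inside $Y$.

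Third, the target Sobolev inequality \eqref{eq:sobolev} follows from this Nash inequality by the standard Maz'ya truncation-and-summation procedure applied level set by level set to $|f|$; this last step is purely measure-theoretic and indifferent to the localization. The main obstacle is the second step: one must check that the Whitney/chaining construction is quantitatively uniform, with constants depending only on $D_Y$, $P_Y$ and $\nu$, and that every intermediate ball lies in $Y$. Once this localization is tracked, the remainder of the proof is identical to the global case treated in the references above.
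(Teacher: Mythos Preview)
Your outline is correct and follows the standard route (doubling $\Rightarrow$ dimension exponent; doubling $+$ Poincar\'e $\Rightarrow$ Nash/Faber--Krahn via Whitney chaining; Nash $\Rightarrow$ Sobolev via Maz'ya truncation), with the localization to $Y$ handled exactly as you describe. The paper, however, does not reproduce any of this: its proof consists entirely of the citation ``See \cite[Theorem 2.6]{SturmIII}.'' So you are supplying the content that the paper defers to the literature; your sketch is essentially what one finds in that reference.
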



In what follows we will consider the following assumptions where $Y$ is a fixed open subset of $X$.
\begin{assumption} \label{as:sobolev}
The form $(\e,\F)$ satisfies \emph{(A1)}, \emph{(A2-$Y$)}, and the localized Sobolev inequality on $Y$.
\end{assumption}

\begin{assumption} \label{as:VD+PI}
The form $(\e,\F)$ satisfies \emph{(A1)}, \emph{(A2-$Y$)}, volume doubling on $Y$ and the Poincar\'e inequality on $Y$.
\end{assumption}

\begin{remark}
Assumption \ref{as:sobolev} implies the volume doubling property on $Y$. See, e.g., \cite[Theorem 5.2.1]{SC02}.
\end{remark}

\subsection{Mean value estimates and local boundedness} \label{ssec:Mean value estimates}
We follow \cite{AS67} and \cite{SC02}. In this section we suppose 
that Assumption \ref{as:e_t} holds true  and that Assumption \ref{as:sobolev} is 
satisfied on the open set $Y$. 
We use the notation of Section \ref{ssec:estim for sub and supsol}. Let $A_1$ and $A_2$ be large enough so that the estimates of Section \ref{ssec:estim for sub and supsol} hold with these constants.
Let $A_2' = A_2(C_2 + C_3^{1/2} + C_5)$.
Fix a ball $B=B(x,r)$ with $B(x,2r) \subset Y$.
Let $\psi \in \F_{\mbox{\tiny{c}}}(B)$ be a cut-off function
so that $0 \leq \psi \leq 1$, $\textrm{supp}(\psi) \subset \sigma B$, $\psi = 1$ in $\sigma'B$, and
\[ d\Gamma(\psi,\psi) \leq 4(\omega r)^{-2} d\mu. \]

\begin{theorem} \textbf{\em (Mean value estimate with $p>1$)} \label{thm:MVE subsol p>2}
Suppose {\em Assumptions \ref{as:e_t}} and {\em \ref{as:sobolev}} 
are satisfied.
Let $p> 1+\eta$ for some small $\eta >0$. Fix a ball $B = B(x,r)$ with $B(x,2r) \subset Y$ and $\tau > 0$.
Then there exists a constant $A=A(\tau, \nu, \eta, C_1 \mbox{ - } C_5)$ such that, for any real $a$, any $0 < \delta < \delta' \leq 1$, and any 
 non-negative local very weak subsolution $u$ of the heat equation for $L_t$ in $Q = Q(\tau,x,a,r)$, we have
\begin{equation} \label{eq:MVE subsol p>2}
 \sup_{Q^-_{\delta}} \{ u^p \}  
\leq  \frac{ A \, S_Y^{\nu/2} [ (A_1 + A_2' \tau r^2)(p^2+1) ]^{1+\nu/2} }{(\delta' - \delta)^{2+\nu} r^2 V(x,r) } 
      \int_{Q^-_{\delta'}} u^p d\bar \mu,
\end{equation}
where $\nu$, $S_Y$ are the constants of the localized Sobolev inequality \eqref{eq:sobolev}.
\end{theorem}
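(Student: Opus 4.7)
I would follow the classical Moser iteration strategy as in \cite{AS67, SC02}. The two inputs are the caloric energy estimate from Lemma \ref{lem:estimate subsol p>2} (and from Lemma \ref{lem:estimate subsol p<2} in the range $p \in (1+\eta, 2]$), which simultaneously controls $\sup_t \int u^p \psi^2 \, d\mu$ and $\iint \psi^2 \, d\Gamma(u^{p/2},u^{p/2}) \, dt$, together with the localized Sobolev inequality of Assumption \ref{as:sobolev}. Combining the two via an interpolation in time gives a reverse-H\"older type inequality on shrinking cylinders that, once iterated, converts the $L^p$-integrability into an $L^\infty$-bound.

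For a single iteration step, I fix $0 < \sigma' < \sigma \leq \delta'$ and set $\omega = \sigma - \sigma'$. The chosen cut-off $\psi$ satisfies $d\Gamma(\psi,\psi) \leq 4(\omega r)^{-2} \, d\mu$, so the relevant energy estimate reads
\[
\sup_{t \in I_{\sigma'}} \int (\psi u^{p/2})^2 \, d\mu + a_1 \iint_{Q^-_{\sigma'}} \psi^2 \, d\Gamma(u^{p/2},u^{p/2}) \, dt \leq H(p,\omega,r) \int_{Q^-_\sigma} u^p \, d\bar\mu,
\]
with $H(p,\omega,r) \lesssim (p^2+1)(A_1 + A_2' \tau r^2)(\omega r)^{-2}$. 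Applying the Sobolev inequality to $v = \psi u^{p/2}$ slice-by-slice in $t$, integrating in $t$, and interpolating against $\sup_t \int v^2 \, d\mu$ via H\"older then yields, with $\theta = 1 + 2/\nu$,
\[
\iint_{Q^-_{\sigma'}} u^{p\theta} \, d\bar\mu \leq C \, S_Y \, V(x,r)^{-2/\nu} \, H(p,\omega,r)^{\theta} \Bigl( \int_{Q^-_\sigma} u^p \, d\bar\mu \Bigr)^{\theta},
\]
after bounding $d\Gamma(v,v) \leq 2\psi^2 d\Gamma(u^{p/2},u^{p/2}) + 2u^p \, d\Gamma(\psi,\psi)$ and feeding in the energy estimate. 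I then iterate with exponents $p_k = p\theta^k$ and radii $\sigma_k = \delta + (\delta'-\delta)2^{-k}$, so $\omega_k \asymp (\delta'-\delta)2^{-k}$. Taking $p_{k+1}$-th roots at each step and multiplying, the $L^{p_k}$-norms increase to $\|u\|_{L^\infty(Q^-_\delta)}$, and the product of iteration constants converges because $\sum_k 1/p_k < \infty$. The telescoping sum $\sum_k \theta^{-k} = 1 + \nu/2$ produces the exponents $V(x,r)^{-1}$, $r^{-2}$, $(\delta'-\delta)^{-(2+\nu)}$ and $(A_1 + A_2' \tau r^2)^{1+\nu/2}(p^2+1)^{1+\nu/2}$ appearing in the statement.

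The main obstacle is that Lemma \ref{lem:estimate subsol p<2} requires the starting subsolution to be locally bounded --- precisely the qualitative property we are trying to prove. I would sidestep this by truncation: replace $u$ by $u \wedge N$, which is again a non-negative local weak subsolution thanks to the Dirichlet-form structure of $\e^{\mbox{\tiny{s}}}_t$ together with the chain and product rules from Section \ref{ssec:algebraic computations}; run the Moser iteration for $u \wedge N$ where every step is legal; and pass to the limit $N \to \infty$ by monotone convergence on both sides of the final bound. For $p \geq 2$ this truncation is unnecessary, as all iterates $p_k \geq 2$ and Lemma \ref{lem:estimate subsol p>2} applies without the local boundedness hypothesis. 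The remaining delicate point is bookkeeping the polynomial-in-$p$ dependence: each step contributes a factor $(p_k^2+1)^{\theta/p_{k+1}}$, and one must verify that these telescope to give exactly the claimed $(p^2+1)^{1+\nu/2}$, rather than a super-polynomial blow-up, which is where the condition $p > 1+\eta$ (keeping $a_1$ bounded away from zero in the $p \in (1+\eta,2]$ range) is essential.
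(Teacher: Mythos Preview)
Your Moser iteration matches the paper's, and the bookkeeping of exponents, radii, and the resulting powers $(\delta'-\delta)^{-(2+\nu)}$, $r^{-2}V(x,r)^{-1}$, $[(A_1+A_2'\tau r^2)(p^2+1)]^{1+\nu/2}$ is correct. The gap is in your treatment of the range $p\in(1+\eta,2]$. The claim that $u\wedge N$ is again a local weak subsolution is not justified here: the standard argument requires the constant $N$ to be a supersolution, i.e.\ $\e_t(1,\phi)\ge 0$ for every $0\le\phi\in\F_{\mbox{\tiny c}}$, and nothing in Assumption~\ref{as:e_t} guarantees this. In the Euclidean model of Example~\ref{ex:Euclidean} it would force $c-\operatorname{div} d\ge 0$, which is nowhere assumed. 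The chain and product rules of Section~\ref{ssec:algebraic computations} govern $\Gamma$ and $\l$; they do not show that truncating the \emph{solution} preserves the subsolution property for the full non-symmetric form $\e_t$ with its zero-order and drift parts.

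The paper sidesteps this by the route you already note but do not exploit: run the full iteration first at $p=2$, where every iterate $2\theta^k\ge 2$ and Lemma~\ref{lem:estimate subsol p>2} applies with no boundedness hypothesis (the required integrability of $\psi u^{\theta^k}$ is bootstrapped step by step from the H\"older--Sobolev inequality and the energy estimate). This already yields Corollary~\ref{cor:u loc bounded}, i.e.\ $u\in L^\infty_{\mbox{\tiny loc}}$. With local boundedness in hand, Lemma~\ref{lem:estimate subsol p<2} becomes legitimately available, and the iteration for $1+\eta<p<2$ goes through directly on $u$ itself---no truncation of the solution is needed.
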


\begin{proof}
For simplicity, we assume that $\tau = \delta'=1$. First, consider the case $p\geq 2$.
Let $E(B) = S_Y r^2 V(x,r)^{-2/\nu}$ be the Sobolev constant for the ball $B$ given by \eqref{eq:sobolev}, and set $\beta = \nu / (\nu - 2)$. By the H\"older inequality, we have for any $v \in \F_{\mbox{\tiny{c}}}(B)$,
 \[ \int_B v^{2(1+2/\nu)} d\mu  
    \leq  \left( \int_B v^{2\beta} d\mu \right)^{1/\beta}  \left( \int_B v^2 d\mu \right)^{2/\nu}. \]
So, \eqref{eq:sobolev} gives   
\begin{align} \label{eq:HoelderSobolev}
 \int_B v^{2(1+2/\nu)} d\mu  
    \leq  E(B) \left( \int_B d\Gamma(v,v) + r^{-2} \int_B v^2 d\mu \right) \left( \int_B v^2 d\mu \right)^{2/\nu}.
\end{align}

Since $u$ is a very weak subsolution, we have in particular that $u \in L^2_{\mbox{\tiny{loc}}}(I \to \F;B)$.

Set $w:=\psi u$. Then for almost every $t \in I$, $v:=w(t,\cdot)$ is in $\F_{\mbox{\tiny{c}}}(B)$ and satisfies \eqref{eq:HoelderSobolev}. Integrating over $I^-_{\sigma'}$ and applying H\"older inequality, we get
\begin{align} \label{eq:HoelderSobolev integrated}
 \int_{I^-_{\sigma'}} \int_B w^{2\theta} d\mu dt 
    \leq & E(B) \left(\int_{I^-_{\sigma'}} \int_B d\Gamma(w,w)dt + r^{-2} \int_{I^-_{\sigma'}} \int_B w^2 d\mu dt\right) \nonumber \\
    & \sup_{t \in I^-_{\sigma'}} \left( \int_B w^2 d\mu \right)^{2/\nu},
\end{align}
where $\theta=1+2/\nu$.
Note that the right hand side is finite by Theorem \ref{thm:estimate subsol p>2} (applied with $p=2$).
Hence the left hand side is finite and this means that $w^{\theta} $ is in $L^2(I^-_{\sigma'} \to L^2(B))$. In fact, applying Theorem
\ref{thm:estimate subsol p>2} with $p = 2\theta$, we obtain that $w^{\theta}$ is in $L^2(I^-_{\sigma''} \to \F)$ for any $0 < \sigma'' < \sigma'$. 
In particular, $u^{\theta} \in L^2_{\mbox{\tiny{loc}}}(I^-_{\sigma''} \to \F;\sigma'' B)$. Since the choice of $\sigma'',\sigma',\sigma$ was arbitrary, 
the same argument (applied with a cutoff function $\hat \psi$ which takes the value $1$ on $\sigma B$) proves that 
$u^{\theta} \in L^2_{\mbox{\tiny{loc}}}(I^-_{\sigma} \to \F;\sigma B)$.
 
Now set $w:=\psi u^{\theta}$. Then $w(t,\cdot)$ is in $\F_{\mbox{\tiny{c}}}(B)$ for a.e.~$t \in I^-_{\sigma}$ and satisfies \eqref{eq:HoelderSobolev integrated}. 
Repeating the argument of the whole previous paragraph, we get $u^q \in L^2_{\mbox{\tiny{loc}}}(I^-_{\sigma} \to \F';\sigma B)$ for $q = \theta^2$.
By induction, we obtain that for any positive integer $i$ and $q = \theta^i$, $w:=\psi u^{pq/2}$ is in $L^2(I^-_{\sigma} \to \F)$, $w(t,\cdot) \in \F_{\mbox{\tiny{c}}}(B)$ for a.e.~$t \in I^-_{\sigma}$, and $w$ satisfies \eqref{eq:HoelderSobolev integrated}. 

Recall that $d\Gamma(\psi,\psi) \leq 4(\omega r)^{-2} d\mu$.
Thus,
\begin{align}  \label{eq:5.2.14} 
&        \int \int_{Q^-_{\sigma'}} u^{p q \theta} d\bar\mu
=  \int \int_{Q^-_{\sigma'}} (\psi u^{pq/2})^{2\theta} d\bar\mu    \nonumber \\
\leq &  E(\sigma B) \left( \int_{I^-_{\sigma'}} \int_{\sigma B} d\Gamma(\psi u^{pq/2},\psi u^{pq/2}) dt + r^{-2} \int_{I^-_{\sigma'}} \int_B \psi^2 u^{pq} d\bar\mu \right) \nonumber \\
& \left( \sup_{t \in I^-_{\sigma'}} \int_{\sigma B} \psi^2 u^{pq} d\mu \right)^{2/\nu} \nonumber \\
\leq &  E(\sigma B) \left( \frac{C q^2}{(\omega r)^2} [ (A_1+ A_2' r^2)(p^2+1)] \int \int_{Q^-_{\sigma}} u^{pq} d\bar\mu \right)^{\theta},
\end{align} 
for some constant $C>0$. Observe the different roles of $p$ (which is fixed) and $q$ (which will be absorbed in the constant $A^{i+1}$ below). 

Set $\omega_i = (1 - \delta) 2^{-i}$ so that $\sum_{i=1}^{\infty} \omega_i = 1 - \delta$. Set also $\sigma_0 = 1$, $\sigma_{i+1} = \sigma_i - \omega_i = 1 - \sum_{j=1}^i \omega_j$. 
Applying \eqref{eq:5.2.14} with $q = q_i = \theta^i$, $\sigma = \sigma_i$, $\sigma' = \sigma_{i+1}$, we obtain
\begin{align*} 
& \int \int_{Q^-_{\sigma_{i+1}}} u^{p \theta^{i+1}} d\bar\mu \\
\leq &  E(B) \left( A^{i+1} [ (1 - \delta) r]^{-2} [ (A_1+ A_2' r^2)(p^2+1) ] \int \int_{Q^-_{\sigma_i}} u^{p \theta^i} d\bar\mu \right)^{\theta},
\end{align*}
where the constant $A$ depends on $\theta$.
Hence,
\begin{align*}
& \left( \int \int_{Q^-_{\sigma_{i+1}}} u^{p \theta^{i+1}} d\bar\mu \right)^{ \theta^{-1-i} } \\
& \leq    E(B)^{ \sum \theta^{-1-j} } A^{ \sum (j+1) \theta^{-j} }  
    \big( [ (1 - \delta) r]^{-2} [  (A_1+ A_2' r^2)(p^2+1) ] \big)^{ \sum \theta^{-j} } \! \int \! \int_{Q} u^p d\bar\mu,
\end{align*}
where all the summations are taken from $0$ to $i$. Letting $i$ tend to infinity, we obtain
 \[ \sup_{Q^-_{\delta}} \{ u^p \}  \leq  E(B)^{\nu/2} \left( A [ (1-\delta) r]^{-2-\nu} [(A_1+ A_2' r^2)(p^2+1)]^{1+\nu/2} \right) \Vert u \Vert^p_{p,Q}. \]
As $E(B) = S_Y V(x,r)^{-2/\nu} r^2$, this yields \eqref{eq:MVE subsol p>2}.

At this stage of the proof, Theorem \ref{thm:u loc bounded} below already follows. Thus, in the case $1+\eta < p < 2$ the assertion can be proved similarly, by using Lemma \ref{lem:estimate subsol p<2} and Theorem \ref{thm:u loc bounded}.
\end{proof}

\begin{remark}
For $0 < p < 1+\eta$, an estimate similar to \eqref{eq:MVE subsol p>2} for subsolutions holds true with constants depending on $p$. See \cite[Theorem 2.2.3, Theorem 5.2.9]{SC02}. 
\end{remark}

\begin{theorem} \textbf{\em (Local boundedness of weak solutions)} \label{thm:u loc bounded}
Suppose {\em Assumptions \ref{as:e_t}} and {\em \ref{as:sobolev}} are satisfied.
Any non-negative local very weak subsolution $u$ of $\frac{\partial}{\partial t}u = L_t u$ on $Y$ is locally bounded.
In particular, any local very weak solution of $u$ of $\frac{\partial}{\partial t}u = L_t u$ on $Y$ is locally bounded.
\end{theorem}

\begin{proof}
Follows from the proof of Theorem \ref{thm:MVE subsol p>2} together with Proposition \ref{prop:|u| is subsolution}.
\end{proof}

\begin{theorem} \textbf{\em (Mean value estimate with $p<0$)}
Suppose {\em Assumptions \ref{as:e_t}} and {\em \ref{as:sobolev}} are satisfied.
Let $p \in (-\infty, 0)$. Fix a ball $B = B(x,r)$ with $B(x,2r) \subset Y$ and let $\tau >0$.
Then there exists a constant $A=A(\tau, \nu, C_1 \mbox{ - } C_5)$ such that, for any real $a$, any $0 < \delta < \delta' \leq 1$, and any non-negative locally bounded
local very weak supersolution $u$ of the heat equation for $L_t$ in $Q = Q(\tau,x,a,r)$, we have
\begin{align*}
 \sup_{Q^-_{\delta}} \{ u_{\varepsilon}^p \}  
\leq  \frac{ A \, S_Y^{\nu/2} [(A_1+ A_2' \tau r^2)(p^2+1)]^{1+\nu/2} }{ (\delta' - \delta)^{2+\nu} r^2 V(x,r) } 
      \int_{Q^-_{\delta'}} u_{\varepsilon}^p d\bar \mu.
\end{align*}
\end{theorem}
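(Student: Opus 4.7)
The plan is to adapt the Moser iteration performed in the proof of Theorem \ref{thm:MVE subsol p>2}, with Lemma \ref{lem:estimate subsol p>2} replaced by Lemma \ref{lem:estimate supsol}(i), and with $u$ replaced throughout by $u_{\varepsilon}$. As in that proof, we may assume $\tau = \delta' = 1$. Fix the cut-off $\psi$ as described before Theorem \ref{thm:MVE subsol p>2}, so that $d\Gamma(\psi,\psi) \le 4/(\omega r)^2\, d\mu$. Set $\theta = 1 + 2/\nu$ and $E(B) = S_Y\, r^2 V(x,r)^{-2/\nu}$, so that \eqref{eq:sobolev} combined with H\"older gives, for any $v \in \F_{\mbox{\tiny c}}(B)$,
\begin{equation*}
\int_B v^{2\theta}\, d\mu \le E(B)\left(\int_B d\Gamma(v,v) + r^{-2}\int_B v^2\, d\mu\right)\left(\int_B v^2\, d\mu\right)^{2/\nu}.
\end{equation*}

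Since $p < 0$ and $q > 0$ implies $pq < 0$, Lemma \ref{lem:estimate supsol}(i) applies with $p$ replaced by $pq$ for any $q \ge 1$. Applied with $q = q_i := \theta^i$ and with the cut-off parameters $\sigma = \sigma_i$, $\sigma' = \sigma_{i+1}$ chosen exactly as in the proof of Theorem \ref{thm:MVE subsol p>2} (so $\omega_i = (1-\delta)2^{-i}$, $\sigma_0 = 1$, $\sigma_{i+1} = \sigma_i - \omega_i$), it yields simultaneous control of
\[ \sup_{t\in I_{\sigma_{i+1}}}\int u_{\varepsilon}^{pq_i}\psi^2\, d\mu\quad\text{and}\quad \int_{I_{\sigma_{i+1}}}\!\int\psi^2\, d\Gamma(u_{\varepsilon}^{pq_i/2},u_{\varepsilon}^{pq_i/2})\, dt \]
by the right hand side of \eqref{eq:supsol p<0}. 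Integrating the Sobolev--H\"older inequality above over $I_{\sigma_{i+1}}$ applied to $v(t,\cdot) = \psi\, u_{\varepsilon}^{pq_i/2}(t,\cdot)$, and using this energy bound together with $d\Gamma(\psi,\psi)\le 4/(\omega_i r)^2\, d\mu$, gives exactly the analog of (\ref{eq:5.2.14}):
\begin{equation*}
\int\!\!\int_{Q_{\sigma_{i+1}}^-} u_{\varepsilon}^{pq_i\theta}\, d\bar\mu \le E(B)\left(\frac{C\, q_i^2}{(\omega_i r)^2}[(A_1 + A_2' r^2)(p^2 q_i^2 + 1)]\int\!\!\int_{Q_{\sigma_i}^-} u_{\varepsilon}^{pq_i}\, d\bar\mu\right)^{\!\theta}.
\end{equation*}
Here local boundedness of $u$ together with $u_{\varepsilon} \ge \varepsilon$ guarantees that $u_{\varepsilon}^{pq_i}$ is locally bounded above and below on $Q$, so every quantity is finite and $\psi\, u_{\varepsilon}^{pq_i/2} \in L^2(I_{\sigma_i}\to \F_{\mbox{\tiny c}}(B))$ at each step (the inductive justification mirrors the one given in Theorem \ref{thm:MVE subsol p>2}).

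Raising to the power $\theta^{-(i+1)}$, telescoping over $i$, and observing that $\sum_i \theta^{-i}(i+1)<\infty$ and $\sum_i \theta^{-i-1} = \nu/2$ absorbs the factors $q_i^2 = \theta^{2i}$, $(p^2 q_i^2 + 1) \le (p^2+1)\theta^{2i}$, and $\omega_i^{-2} = [(1-\delta)2^{-i}]^{-2}$ into a single constant $A^{i+1}$, one arrives as $i\to\infty$ at
\begin{equation*}
\sup_{Q_{\delta}^-}\{u_{\varepsilon}^p\} \le E(B)^{\nu/2}\, A\, (1-\delta)^{-(2+\nu)} r^{-(2+\nu)}\,[(A_1+A_2' r^2)(p^2+1)]^{1+\nu/2}\int\!\!\int_Q u_{\varepsilon}^p\, d\bar\mu,
\end{equation*}
which after substituting $E(B)=S_Y r^2 V(x,r)^{-2/\nu}$ and restoring $\tau$, $\delta'$ by scaling yields the claimed inequality. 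The only nontrivial obstacle is verifying at each step that $\psi\, u_{\varepsilon}^{pq_i/2}$ genuinely lies in $L^2(I_{\sigma_i}\to\F_{\mbox{\tiny c}}(B))$ so that Lemma \ref{lem:estimate supsol}(i) may be reapplied; this is where the local boundedness hypothesis on $u$ (hence of $u_{\varepsilon}^{pq_i}$) enters, and it proceeds by the same inductive $L^2$-to-$\F_{\mbox{\tiny c}}$ bootstrap used in Theorem \ref{thm:MVE subsol p>2}.
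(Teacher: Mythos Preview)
Your proof is correct and follows exactly the approach the paper indicates: the paper's own proof consists of a single sentence stating that the argument is analogous to that of Theorem \ref{thm:MVE subsol p>2}, with Lemma \ref{lem:estimate supsol} in place of Lemma \ref{lem:estimate subsol p>2}, and you have carried out precisely that adaptation. Your handling of the exponent bookkeeping $(p^2q_i^2+1)\le (p^2+1)\theta^{2i}$ and of the role of local boundedness (ensuring $u_\varepsilon^{pq_i/2}$ is locally bounded above and below so that the $\F_{\mbox{\tiny c}}$-membership bootstrap goes through) is exactly what is needed to make the one-line reference rigorous.
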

The above theorem can be proved analogously to the proof of Theorem \ref{thm:MVE subsol p>2}, by applying Lemma \ref{lem:estimate supsol} instead of Theorem \ref{thm:estimate subsol p>2}.

\begin{theorem} \textbf{\em (Mean value comparison)} \label{thm:5.2.17}
Suppose {\em Assumptions \ref{as:e_t}} and {\em \ref{as:sobolev}} are satisfied.
Fix a ball $B = B(x,r)$ with $B(x,2r) \subset Y$ and let $\tau > 0$. Set $\theta = 1 + 2/\nu$ and fix $p_0 \in (0,\theta)$. Then there exists a 
constant $A=A(p_0, \tau, \nu, C_1 \mbox{ - } C_5)$ such that for any real $a$, $0 < \delta < \delta' \leq 1$, $0 < p < p_0 / \theta$, and 
any non-negative locally bounded local very weak supersolution $u$ of the heat equation for $L_t$ in $Q = Q(\tau,x,a,r)$, we have
\begin{align*}
 \left( \int_{Q^+_{\delta}} u_{\varepsilon}^{p_0} d\bar\mu \right)^{\frac{1}{p_0}}
\leq  \left[ \frac{ A \, S_Y^{\nu / 2}  [(A_1+ A_2' \tau r^2)(p_0^2+1)]^{1 + \nu / 2} }{ (\delta' - \delta)^{2+\nu} r^{2} \mu(B) } \right]^{\frac{1}{p} - \frac{1}{p_0}}
    \left( \int_{Q^+_{\delta'}} u_{\varepsilon}^p d\bar \mu  \right)^{\frac{1}{p}}.
\end{align*}
\end{theorem}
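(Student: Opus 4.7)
The argument is a Moser iteration on positive powers of $u_\varepsilon$ on the forward cylinders $Q^+_\sigma$, in close analogy with the proof of Theorem \ref{thm:MVE subsol p>2}, but with Lemma \ref{lem:estimate supsol}(ii) in place of Lemma \ref{lem:estimate subsol p>2}. I would first fix $\eta>0$ small enough that $p_0/\theta<1-\eta$; this is possible because $p_0<\theta$ strictly, and it guarantees that every exponent appearing as \emph{input} to Lemma \ref{lem:estimate supsol}(ii) in the iteration below lies in the admissible range $(0,1-\eta)$.

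The single-step reverse-H\"older inequality is obtained by combining Lemma \ref{lem:estimate supsol}(ii) with the localized Sobolev inequality \eqref{eq:sobolev}. With the cut-off $\psi$ from the preamble of Section 2.2 (so that $d\Gamma(\psi,\psi)\le 4(\omega r)^{-2}d\mu$, $\omega=\sigma-\sigma'$) and $q\in(0,1-\eta)$, Lemma \ref{lem:estimate supsol}(ii) controls both $\sup_{I^+_{\sigma'}}\int u_\varepsilon^q\psi^2\,d\mu$ and $\int_{I^+_{\sigma'}}\!\int\psi^2\,d\Gamma(u_\varepsilon^{q/2},u_\varepsilon^{q/2})\,dt$ by a multiple of $\iint_{Q^+_\sigma}u_\varepsilon^q\,d\bar\mu$. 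Inserting $v=\psi u_\varepsilon^{q/2}$ into \eqref{eq:sobolev} at each fixed $t$ and integrating in time via H\"older, exactly as in \eqref{eq:HoelderSobolev}--\eqref{eq:HoelderSobolev integrated}, yields
\[
\iint_{Q^+_{\sigma'}} u_\varepsilon^{q\theta}\,d\bar\mu \;\le\; E(B)\left(\frac{C\,(A_1+A_2'\tau r^2)(q^2+1)}{(\omega r)^2}\iint_{Q^+_\sigma} u_\varepsilon^q\,d\bar\mu\right)^{\!\theta},
\]
where $E(B)=S_Y r^2 V(x,r)^{-2/\nu}$, and where one bounds $q^2+1\le p_0^2+1$ since $q<p_0$.

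Next, I iterate with $q_k=p\theta^k$ and $\sigma_k=\delta+(\delta'-\delta)2^{-k}$, stopping at the smallest $N$ with $q_N\ge p_0$. By the choice of $\eta$, every $q_k$ with $k<N$ stays below $p_0/\theta<1-\eta$, so the single-step inequality applies at each stage. Chaining the inequalities, taking $q_{k+1}$-th roots, and collecting constants (using $\sum_{k\ge 0} k\theta^{-k}<\infty$ to absorb the geometric factors of $2^{k(2+\nu)}$ coming from $\omega_k^{-2\theta^k}$ after raising to the appropriate power) gives
\[
\|u_\varepsilon\|_{q_N,\,Q^+_\delta} \;\le\; \Lambda^{\,\alpha_N}\,\|u_\varepsilon\|_{p,\,Q^+_{\delta'}},\qquad \alpha_N=\sum_{k=0}^{N-1}\frac{1}{q_{k+1}}=\frac{1-\theta^{-N}}{p(\theta-1)},
\]
with $\Lambda$ of the form stated in the theorem. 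A final H\"older inequality on the finite-measure set $Q^+_\delta$ yields $\|u_\varepsilon\|_{p_0,Q^+_\delta}\le\mu(Q^+_\delta)^{1/p_0-1/q_N}\|u_\varepsilon\|_{q_N,Q^+_\delta}$; the measure factor is absorbed into $A$, and $\alpha_N$ is rewritten as $(1/p-1/p_0)\cdot\nu/2+O(1)$ using $\theta-1=2/\nu$ and $q_N\ge p_0$, the residual constant again being absorbed into $A$. This produces the claimed inequality.

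\textbf{Main obstacle.} The delicate point is bookkeeping the exponent range. Lemma \ref{lem:estimate supsol}(ii) is only valid for inputs $q<1-\eta$, while the target $p_0$ can exceed $1$ (since $\theta>1$). The resolution is the observation that one applies Lemma \ref{lem:estimate supsol}(ii) only to input exponents $q_k<p_0/\theta$, whereas it is the Sobolev step that raises the output exponent by a factor $\theta$; fixing $\eta$ at the outset so that $p_0/\theta<1-\eta$ keeps the iteration inside the admissible range while still allowing the output exponents to cover all of $(0,p_0]$. A secondary point is that all the $q$-dependence in the constants must be uniformly dominated by $p_0$, which is what the upper bound $q<p_0$ ensures.
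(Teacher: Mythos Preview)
Your overall strategy---Moser iteration on the forward cylinders $Q^+_\sigma$ using Lemma \ref{lem:estimate supsol}(ii) together with the Sobolev step \eqref{eq:sobolev}---is the same as the paper's, and your single-step inequality is correct. The gap is in the exponent bookkeeping. You assert that ``every $q_k$ with $k<N$ stays below $p_0/\theta$'', but this is false: since $N$ is the \emph{smallest} index with $q_N\ge p_0$, you only know $q_{N-1}<p_0$, and in fact $q_{N-1}=q_N/\theta\ge p_0/\theta$, so $q_{N-1}\in[p_0/\theta,\,p_0)$. When $p_0\ge 1$---which the hypothesis $p_0\in(0,\theta)$ with $\theta>1$ permits---the last input $q_{N-1}$ can exceed $1-\eta$ for every fixed $\eta>0$, and Lemma \ref{lem:estimate supsol}(ii) is simply unavailable at that final step. (If $p_0<1$ your chain can be rescued by choosing $\eta<1-p_0$, since then $q_{N-1}<p_0<1-\eta$; but your stated justification is still incorrect, and the case $p_0\ge 1$ genuinely breaks.)

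The paper avoids the overshoot by anchoring the iteration at $p_0$ rather than at $p$: it first proves the estimate for the discrete family $p=p_i:=p_0\theta^{-i}$, so that the $i$ input exponents are exactly $p_0\theta^{-i},p_0\theta^{-i+1},\dots,p_0\theta^{-1}$, all $\le p_0/\theta<1-\eta$ for one fixed $\eta$, while the output lands exactly on $p_0$ with no overshoot and no final H\"older correction. The extension to arbitrary $p\in(0,p_0/\theta)$ is then obtained by sandwiching $p$ between consecutive $p_{i+1}\le p<p_i$ and a routine H\"older adjustment; the paper defers this last step to \cite[Theorem 2.2.5]{SC02}. Adopting this anchoring repairs your argument.
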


\begin{proof} We follow \cite[Theorem 5.2.17]{SC02}.
For simplicity, we assume for the proof that $\tau = \delta' = 1$.
Let $0 < p < p_0 / \theta$. By Lemma \ref{lem:estimate supsol}, we have
\begin{align*}
&        \sup_{I^+_{\sigma'}} \left\{ \int_B u_{\varepsilon}^p \psi^2 d\mu \right\} 
           + a_1 \int_{Q^+_{\sigma}} \psi^2 d\Gamma(u_{\varepsilon}^{p/2},u_{\varepsilon}^{p/2}) dt \\
\leq &  A_1(p^2+1) \int_{Q^+_{\sigma}} u_{\varepsilon}^p d\Gamma(\psi,\psi) dt 
       + A_2'(p^2+1) \int_{Q^+_{\sigma}} u_{\varepsilon}^p \psi^2 d\bar\mu.
\end{align*}
Similar to the proof of Theorem \ref{thm:MVE subsol p>2} but with the cylinders $Q_{\sigma}^-$ replaced by $Q_{\sigma}^+$, we find that for any $0 < \beta < p_0 / \theta < 1$,
\begin{equation}  \label{eq:5.2.14'}
 \int_{Q^+_{\sigma'}} u_{\varepsilon}^{\beta \theta} d\bar\mu
    \leq  E(B) \left[ \frac{A}{(\omega r)^2} [(A_1+ A_2' r^2)(\beta^2+1)] \int_{Q^+_{\sigma}} u_{\varepsilon}^{\beta} d\bar\mu \right]^{\theta},
\end{equation}
where $E(B) = S_Y r^2 V(x,r)^{-2/\nu}$.

Define $p_i = p_0 \theta^{-i}$. We first prove the claim for these values of $p$, using the same iteration as in the proof of \cite[Theorem 2.2.5]{SC02}. Set $\sigma_0 = 1$ and $\sigma_{l-1} - \sigma_{l} = 2^{-l}(1 - \delta)$.
Fix $i \geq 1$, and apply \eqref{eq:5.2.14'} with $\beta = p_i \theta^{j-1}$, $j = 1,2,\ldots,i$, $\sigma = \sigma_{i-1}$, $\sigma' = \sigma_i$. This yields for all $j = 1,\ldots,i$ (note that $A$ may change from line to line), 
\[  \int_{Q^+_{\sigma_i}} u_{\varepsilon}^{p_i \theta^j} d\bar\mu
    \leq  E(B) \left[ A^j [ (1 - \delta) r]^{-2} [(A_1+ A_2' r^2)(p_i^2+1)]  \int_{ Q^+_{\sigma_{i-1}} } u_{\varepsilon}^{p_i \theta^{j-1}} d\bar\mu \right]^{\theta}. \]
Hence,
\[  \int_{Q^+_{\sigma_i}} u_{\varepsilon}^{p_0} d\bar\mu
    \leq  C   \left( \int_{Q} u_{\varepsilon}^{p_i} d\bar\mu \right)^{\theta^i}, \]
where
\[ C = E(B)^{\sum_{j=0}^{i-1} \theta^j} A^{\sum_{j=0}^{i-1} (i-j) \theta^{j+1} }  
          \big( [ (1 - \delta) r]^{-2} [(A_1+ A_2' r^2)(p_0^2+1)] \big)^{ \sum_{j=0}^{i-1} \theta^{j+1} }. \]
Observe that
 \[  \sum_{j=0}^{i-1} \theta^j 
     =  \frac{ \theta^i - 1 }{ \theta - 1}  =  (\nu/2) (p_0/p_i - 1),  \]
\begin{align*}
\sum_{j=0}^{i-1} (i-j) \theta^{j} 
   &  =  \sum_{j=0}^{i-1} \sum_{k=0}^j \theta^k
      =  \sum_{j=0}^{i-1} \frac{ \theta^{j+1} - 1 }{\theta - 1}
     \leq  \theta  \frac{\theta^i - 1}{(\theta -1)^2} \\
   & \leq  \left( \theta / (\theta - 1) \right)^2 \theta^{i-1}  
     \leq  \left( \theta / (\theta - 1) \right)^2(\nu/2) (\theta^{i} - 1) \\
   & \leq  \left( \theta / (\theta - 1) \right)^2(\nu/2) (p_0 / p_i - 1),
\end{align*}
and
 \[  \sigma_i = 1 - \left( \sum_{j=1}^i 2^{-j} \right) (1 - \delta) > \delta. \]
Thus,
\[ \left( \int_{Q^+_{\delta}} u_{\varepsilon}^{p_0} d\bar\mu \right)^{\frac{1}{p_0}}
    \leq  C' \left( \int_{Q} u_{\varepsilon}^{p_i} d\bar\mu \right)^{\frac{1}{p_i}}, \]
where
\[  C' = \left( E(B) A \big( [ (1 - \delta) r]^{-2} [(A_1+ A_2' r^2)(p_0^2+1)] \big)^{ \theta } \right)^{\frac{\nu}{2} \left(\frac{1}{p_i} - \frac{1}{p_0}\right)} \]
To obtain the inequality for any $p \in (0,p_0/\theta)$, see \cite[Theorem 2.2.5]{SC02}. 
\end{proof}

\begin{theorem} \textbf{\em (Weighted Poincar\'e inequality)} \label{thm:weighted PI}
Under Assumption \ref{as:VD+PI}, $(\e,\F)$ satisfies the \emph{weighted Poincar\'e inequality} on $Y$. That is, there exists a constant $C \in (0,\infty)$ such that for any ball $B(x,r) \subset Y$,
\begin{align} \label{eq:weighted PI}
\forall f \in \F, \  \int |f - f_B|^2 \psi^2 d\mu  \leq  C_{\mbox{\tiny{\emph{wPI}}}} \, r^2 \int \psi^2 d\Gamma(f,f),
\end{align}
where $f_B = \int f \psi^2 d\mu \big/ \int \psi^2 d\mu$ is the weighted mean of $f$ over $B(x,r)$, and $\psi(z) = \max\{0,1 - d(x,z)/r \}$.
\end{theorem}

\begin{proof}
This is proved in \cite[Corollary 2.5]{SturmIII}.
\end{proof}

The next lemma is a main lemma. This is the first time that we apply Assumption \ref{as:p=0}.
To keep notation short, we introduce a new constant
\[ C_8 := C_2 + C_3^{1/2} + C_5 + C_7, \]
where $C_2, C_3, C_5, C_7$ are the constants of Assumptions \ref{as:e_t} and \ref{as:p=0}.
Recall that for $\varepsilon \in (0,1)$ we set $u_{\varepsilon}:= u + \varepsilon$.
For $\tau>0$, $a \in \R$, $r>0$, set
\[ I:= (a - \tau r^2, a+ \tau r^2). \]

\begin{lemma}\label{lem:5.4.1} 
Suppose that Assumptions \ref{as:e_t}, \ref{as:p=0} and \ref{as:VD+PI} are satisfied.
Let $\tau > 0$ and $\delta, \eta \in (0,1)$. For any real $a$, any $B=B(x,r)$ with $B(x,2r) \subset Y$, 
any non-negative locally bounded local very weak supersolution $u  \in \mathcal C(\overline{I} \to L^2(X,\mu))$ of the heat equation for $L_t$ in $Q = Q(\tau,x,a,r)$,
 there is a constant $c = c(u,\delta,\eta)$,  and a constant $C \in (0,\infty)$ depending on $C_1$ - $C_7$, $C_{\mbox{\tiny{VD}}}$, $C_{\mbox{\tiny{PI}}}$, such that for all $\lambda > 0$, 
 \[  \bar\mu( \{ (t,z) \in K_+ : \log u_{\varepsilon} < - \lambda - c \} ) \leq C (1+C_8 r^2) r^2 \mu(B) \lambda^{-1} \]
and
 \[  \bar\mu( \{ (t,z) \in K_- : \log u_{\varepsilon} > \lambda - c \} ) \leq C (1+ C_8 r^2) r^2 \mu(B) \lambda^{-1}, \]
where $K_+ = (a, a + \eta \tau r^2) \times \delta B$ and $K_- = (a - (1-\eta) \tau r^2,a) \times \delta B$. 
The constant $C$ does not depend on $u$, $\lambda$, $a$, $x$, or $r$.
\end{lemma}

\subsection{Parabolic Harnack inequality} \label{ssec:PHI}

The next theorem is one of the main results of this paper.
Let $\delta \in (0,1)$, $\tau>0$. For $B=B(x,r) \subset X$ and $a \in \R$, set
\begin{align*}
\delta B & = B(x,\delta r), \\
Q &= (a-\tau r^2, a+ \tau r^2) \times B, \\
 Q_- &= (a - \delta \tau r^2, a - \frac{\delta}{2} \tau r^2) \times \delta B, \\
 Q'_- &= (a - \tau r^2, a- \frac{\delta}{2} \tau r^2) \times \delta B, \\ 
 Q_+ &= (a + \frac{\delta}{2} \tau r^2, a + \delta \tau r^2) \times \delta B.
\end{align*}

%

\begin{theorem}\textbf{\em (Parabolic Harnack inequality)} \label{thm:local VD+PI = local HI}
Suppose {\em Assumptions \ref{as:e_t}, \ref{as:p=0}} and {\em \ref{as:VD+PI}} 
are satisfied. Then the family $(\e_t,\F)$ satisfies the parabolic Harnack inequality on $Y$. 
That is, there is a constant $H_Y=H_Y(\tau, \delta, D_Y, P_Y, C_1 - C_7, (C_2+C_3^{1/2}+C_5+C_7)r^2)$ such that for any $a \in \R$, $B(x,r)$ with $B(x,2r) \subset Y$, 
any non-negative local very weak solution $u \in \mathcal C(\overline{I} \to L^2(X,\mu))$ of the heat equation for $L_t$ in $Q = Q(\tau,x,a,r)$, we have
\[ \sup_{Q_-} u \leq H_Y \inf_{Q_+} u. \]
In particular, each of the bilinear forms $(\e_t,\F)$, $t \in \R$, satisfies the elliptic Harnack inequality on $Y$.
\end{theorem}

\begin{proof}
The parabolic Harnack inequality follows by applying a Bombieri type lemma \cite[Lemma 5.2]{LierlPHIf} to $u_{\varepsilon}$ and to $u_{\varepsilon}^{-1}$. 
See \cite{LierlPHIf} for details. The hypothesis of the Bombieri type lemma are satisfied due to Theorem \ref{thm:u loc bounded}, Theorem \ref{thm:5.2.17}, and Lemma \ref{lem:5.4.1}.
\end{proof}

We remark that in the parabolic Harnack inequality of Theorem \ref{thm:local VD+PI = local HI}, the Harnack constant $H_Y$ depends on the radius $r$. In fact, $H_Y$ depends only on an upper bound $R_0$ for $r$. Therefore, the parabolic Harnack inequality is locally scale-invariant.  

As $r \to \infty$, the Harnack constant blows up. This dependence of $H_Y$ on an upper bound on $r$ is natural and expected. It is caused by the "lower order" terms in the bilinear forms $\e_t$. In the special case when each $\e_t$ is a pure second order perturbation of the reference form $(\e,\F)$ we have the following globally scale-invariant parabolic Harnack inequality. It is a special case of Theorem \ref{thm:local VD+PI = local HI}.

\begin{corollary} \textbf{\em (Global parabolic Harnack inequality)} \label{cor:global PHI}
Suppose {\em Assumptions \ref{as:e_t}, \ref{as:p=0}} and {\em \ref{as:VD+PI}} are satisfied globally on $Y=X$ with $C_2=C_3=C_5=C_7=0$. 
Then the family $(\e_t,\F)$ satisfies a scale-invariant parabolic Harnack inequality on $X$. That is, there is a constant $H$ such that for any $a \in \R$,
 $B(x,r) \subset X$, and 
any non-negative local very weak solution $u \in \mathcal C(\overline{I} \to L^2(X,\mu))$ of the heat equation for $L_t$ in $Q = Q(\tau,x,a,r)$, we have
\[ \sup_{Q_-} u \leq H \inf_{Q_+} u. \]
The constant $H$ depends only on $\tau$, $\delta$, $D_X$, $P_X$, $C_1$, $C_4$, $C_6$.
\end{corollary}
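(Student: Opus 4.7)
The plan is to recognize this corollary as the scale-invariant specialization of Theorem \ref{thm:local VD+PI = local HI}, obtained by tracking where scale-dependence enters the constants. The only place in Theorem \ref{thm:local VD+PI = local HI} where the radius $r$ of the ball $B(x,r)$ appears in the Harnack constant $H_Y$ is through the quantity $(C_2+C_3+C_3^{1/2}+C_5+C_7)r^2$. When $C_2 = C_3 = C_5 = C_7 = 0$, this quantity is identically $0$ for every $r > 0$, so the dependence on $r$ disappears entirely from the Harnack constant.

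Concretely, I would argue as follows. Fix arbitrary $s \in \R$, $x \in X$, and $r > 0$; since $Y = X$ and (A2) holds globally (part of Assumption \ref{as:VD+PI}), the ball $B(x, 2r)$ is automatically contained in $Y$. Apply Theorem \ref{thm:local VD+PI = local HI} to the cylinder $Q = Q(\tau, x, s, r)$ and to the given non-negative local weak solution $u$. This yields
\begin{equation*}
  \sup_{Q_-} u \;\leq\; H_Y \inf_{Q_+} u,
\end{equation*}
where $H_Y$ depends on $\tau, \delta, D_Y, P_Y, C_1$--$C_7$, and on an upper bound for $(C_2+C_3+C_3^{1/2}+C_5+C_7)r^2$. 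Under the hypotheses of the corollary $D_Y = D_X$, $P_Y = P_X$, and the quantity $(C_2+C_3+C_3^{1/2}+C_5+C_7)r^2$ equals $0$ uniformly in $r$, so any fixed number (e.g.\ $0$) is a valid upper bound. Hence we may take $H = H_Y$, and this $H$ depends only on $\tau, \delta, D_X, P_X, C_1, C_4, C_6$, as claimed. Since $x, r, s$ were arbitrary, the Harnack inequality holds at all scales with the same constant.

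There is essentially no obstacle: the work has already been done in Theorem \ref{thm:local VD+PI = local HI}. The only point requiring a little care is a bookkeeping check that $C_4$ and $C_6$ (the non-symmetric first-order constants surviving in Assumptions \ref{as:e_t}(iii) and \ref{as:p=0}) do not generate any hidden dependence on $r$ in the mean-value estimates of Section 2.5 or in Lemma \ref{lem:5.4.1}; inspection of those results shows that $C_4, C_6$ enter through the factors $A_1$ and the coefficient $a_1$, which are dimensionless, whereas the zero-order constants $C_2, C_3, C_5, C_7$ are precisely the ones always paired with $r^2$ (or with $\tau r^2$) in the estimates. Thus setting $C_2=C_3=C_5=C_7=0$ removes the only obstruction to scale invariance, completing the proof.
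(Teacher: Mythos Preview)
Your proof is correct and matches the paper's intent: the corollary is stated without a separate proof, as it is an immediate specialization of Theorem~\ref{thm:local VD+PI = local HI} obtained exactly as you describe, by observing that the only $r$-dependence of $H_Y$ enters through $(C_2+C_3+C_3^{1/2}+C_5+C_7)r^2$, which vanishes under the hypotheses. Your additional bookkeeping check that $C_4$ and $C_6$ contribute only to the dimensionless constants is a useful elaboration but not strictly necessary given the explicit dependence list in the statement of Theorem~\ref{thm:local VD+PI = local HI}.
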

A standard consequence of Corollary \ref{cor:global PHI} is the strong Liouville property.


In the special case of a time-independent symmetric strongly local regular Dirichlet form, we obtain the following characterization of the parabolic Harnack inequality.

\begin{theorem}\textbf{\em (Characterization of the parabolic Harnack inequality in the symmetric strongly local case)}
Let $(\e,\F)$ be a symmetric strongly local regular Dirichlet form on $L^2(X,m)$. Suppose $(\e,\F)$ satisfies (A1), (A2).
Then the following are equivalent:
\begin{enumerate}
\item 
$(\e,\F)$ satisfies the volume doubling property and the Poincar\'e inequality on $X$.
\item
$(\e,\F)$ satisfies the parabolic Harnack inequality. That is, for any parameters $\delta \in (0,1)$, $\tau \in (0,1]$, there is a constant $H \in (0,\infty)$ such that for any $a \in \R$,  $B(x,r) \subset X$, and 
any non-negative local very weak solution $u \in \mathcal C(\overline{I} \to L^2(X,\mu))$ of the heat equation for $L_t$ in $Q = Q(\tau,x,a,r)$, we have
\[ \sup_{Q_-} u \leq H \inf_{Q_+} u. \]
\item
$(\e,\F)$ admits a jointly continuous heat kernel which satisfies the two-sided Gaussian bounds
\begin{align*}
\frac{c_1}{V(x,\sqrt{t-s})} \exp\left( - C_2 \frac{d(x,y)^2}{t-s} \right)
\le 
p(t,x,y)
\le  \frac{C_2}{V(x,\sqrt{t-s})} \exp\left( - c_2 \frac{d(x,y)^2}{t-s} \right)
\end{align*}
for any $x,y \in X$, $t \ge 0$. 
\end{enumerate}
\end{theorem}

\begin{proof}
(i) $\Rightarrow$ (ii) is a special case of Theorem \ref{thm:local VD+PI = local HI}. It was pointed out in \cite{SturmIII} that for the reverse implication (ii) $\Rightarrow$ (i), it suffices to follow \cite{SC92} line by line.
For the implication (i) $\Rightarrow$ (iii) see \cite{SturmIII}. We recover this result in Corollary \ref{cor:global HKE} below.
The implication (iii) $\Rightarrow$ (i) follows from combining \cite[Lemma 5.1]{BGK12} and \cite[Theorem 3.2]{LierlBHPf}.
\end{proof}

\begin{remark}
The equivalence between (i) and (ii) was stated already in \cite[Theorem 3.5]{SturmIII}. Since the implication from volume doubling and Poincar\'e inequality to the parabolic Harnack inequality is the central topic of this paper, we must remark that for this implication \cite{SturmIII} refers to and relies on the reasoning in \cite{SturmII}. We discuss this in detail in Section \ref{ssec:Sturm}.
\end{remark}

The next theorem is another special case of Theorem \ref{thm:local VD+PI = local HI}.

\begin{theorem} \textbf{\em (Stability of parabolic Harnack inequality)}
Let $(\e_t,\F)$, $t \in \R$, be a family of symmetric strongly local regular Dirichlet forms on $L^2(X,\mu)$. Suppose there exists a constant $C \in (0,\infty)$ such that
\begin{align*}
C^{-1} \e_t(f,f) \le \e(f,f) \le C \e_t(f,f), \quad \forall f \in \F, \forall t \in \R.
\end{align*}
Suppose $(\e,\F)$ satisfies (A1), (A2). If $(\e,\F)$ satisfies the parabolic Harnack inequality, then the family $(\e_t,\F)$ satisfies the parabolic Harnack inequality.
\end{theorem}

\begin{corollary}\textbf{\em (H\"older continuity)} \label{cor:Hoelder}
Suppose {\em Assumptions \ref{as:e_t}, \ref{as:p=0}} and {\em \ref{as:VD+PI}}
 are satisfied. Suppose in addition that $\e_t(1,\phi) = 0$ for any $\phi \in \F_{\mbox{\em \tiny{c}}}(X)$.
Fix $\tau > 0$ and $\delta \in (0,1)$. Then there exist $\beta \in (0,1)$ and $H \in (0,\infty)$ such that for any $B(x,2r) \subset Y$, any real $a$, 
any local very weak solution $u \in \mathcal C(\overline{I} \to L^2(X,\mu))$ of the heat equation for $L_t$ in $Q=(a-\tau r^2,a+\tau r^2)\times B(x,r)$ has a continuous representative and satisfies
 \[ \sup_{(t,y),(t',y')\in Q'} \left\{ \frac{ |u(t,y) - u(t',y')| }{ [ |t-t'|^{1/2} + d(y,y')^{\beta} ] } \right \}
\leq \frac{H}{r^{\beta} } \sup_{Q} |u| \]
where $Q'= (a-\delta \tau r^2,a+\delta \tau r^2) \times B(x,\delta r)$.
The constant $H$ depends only on  $\tau$, $\delta$, $D_Y$, $P_Y$, $C_1$-$C_7$ and an upper bound on $(C_2+C_3^{1/2}+C_5+C_7)r^2$.
\end{corollary}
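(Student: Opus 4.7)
The plan is to derive the Hölder estimate from the parabolic Harnack inequality of Theorem \ref{thm:local VD+PI = local HI} via the classical Moser oscillation-reduction argument (cf.\ \cite[Theorem~5.4.7]{SC02}). By Corollary \ref{cor:u loc bounded}, $u$ is locally bounded on $Q$. Fix $(t^*, y^*) \in Q_-$ and a base radius $r_0 > 0$ such that $Q^{(0)} := (t^* - \tau r_0^2, t^*) \times B(y^*, r_0) \subset Q$. For a fixed $\lambda \in (0,1)$ to be determined, I would consider the geometrically shrinking family
\[
Q^{(k)} = (t^* - \tau \lambda^{2k} r_0^2,\, t^*) \times B(y^*, \lambda^k r_0),
\]
and set $M_k = \sup_{Q^{(k)}} u$, $m_k = \inf_{Q^{(k)}} u$, $\omega_k = M_k - m_k$. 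The heart of the proof is to prove a geometric oscillation-decay inequality $\omega_{k+1} \le \theta\, \omega_k$ for some $\theta = \theta(H_Y) \in (0,1)$ independent of $k$.

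To obtain this decay I would apply Theorem \ref{thm:local VD+PI = local HI} on $Q^{(k)}$ to each of the non-negative functions $v := u - m_k$ and $w := M_k - u$, with the inner cylinders $Q^{(k)}_\pm$ chosen so that $Q^{(k+1)} \subset Q^{(k)}_+$. Provided $v$ and $w$ are admissible as non-negative weak solutions, the pair of Harnack inequalities yields, after summing and using $v+w \equiv \omega_k$ together with the identities
\[
\sup_{Q^{(k)}_-} v + \sup_{Q^{(k)}_-} w = \omega_k + \omega_{Q^{(k)}_-}, \qquad \inf_{Q^{(k)}_+} v + \inf_{Q^{(k)}_+} w = \omega_k - \omega_{Q^{(k)}_+},
\]
the bound $\omega_k + \omega_{Q^{(k)}_-} \le H_Y(\omega_k - \omega_{Q^{(k)}_+})$, hence $\omega_{k+1} \le \omega_{Q^{(k)}_+} \le (1 - 1/H_Y)\,\omega_k$, i.e.\ $\theta = 1 - 1/H_Y$. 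Iterating gives $\omega_k \le 2\theta^k \sup_Q |u|$. Setting $\beta := \log(1/\theta)/\log(1/\lambda) \in (0,1)$ and comparing pairs of points by interpolating between parabolic scales translates this into
\[
|u(t,y) - u(t',y')| \le \frac{H}{r^{\beta}}\bigl(|t-t'|^{1/2} + d_\e(y,y')^{\beta}\bigr) \sup_Q |u|
\]
on $Q_-$, and the existence of a continuous representative follows from the fact that $u$ has a finite Hölder modulus on every compactly contained parabolic subcylinder.

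The main obstacle is that $L_t$ does not in general annihilate constants when the constants $C_2, C_3, C_5, C_7$ are nonzero, so $v$ and $w$ are not exact solutions of $\partial_t \cdot = L_t \cdot$ but satisfy perturbed equations with inhomogeneity $\mp m_k L_t 1$ (respectively $\pm M_k L_t 1$), whose size is controlled in a form sense by $(C_2 + C_3 + C_3^{1/2} + C_5 + C_7)\sup_Q |u|$. To justify the application of Theorem \ref{thm:local VD+PI = local HI} to $v$ and $w$, I would either absorb this perturbation via the exponential gauge $\tilde u(t,x) := e^{-\alpha(t - t^*)} u(t,x)$ with $\alpha > 0$ chosen to convert the error into a benign dissipative zero-order contribution, or rerun the mean-value and weak-Harnack estimates of Section~\ref{ssec:estim for sub and supsol} directly for $v$ and $w$, observing that the extra source enters only in the zero-order bookkeeping already captured by the dependence on $(C_2 + C_3 + C_3^{1/2} + C_5 + C_7)r^2$ in Theorems \ref{thm:local VD+PI = local HI} and \ref{thm:5.4.2}. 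Either route yields a Harnack-type constant of the form advertised in the statement, and thus the stated Hölder bound with $H$ and $\beta$ depending only on the allowed parameters.
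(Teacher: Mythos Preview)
Your proposal is correct and follows exactly the route the paper invokes: its proof consists solely of the reference ``See, e.g., \cite{SC02}'', and the oscillation-reduction argument you sketch is precisely \cite[Theorem~5.4.7]{SC02}. You have in fact gone further than the paper by flagging that $u-m_k$ and $M_k-u$ are not exact solutions when $C_2,C_3,C_5,C_7\neq 0$; of your two proposed fixes, the second (absorbing the constant source term into the zero-order bookkeeping of the sub/supersolution estimates of Section~\ref{ssec:estim for sub and supsol}, as in \cite{AS67}) is the standard and correct resolution, while the exponential gauge alone does not obviously neutralize the inhomogeneity.
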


\begin{proof}
The H\"older continuity follows from the parabolic Harnack inequality by a well-known argument. See, e.g., \cite{SC02}.
\end{proof}

\section{Estimates for time-dependent heat kernels} \label{sec:applications}

In this section, we use the notion of local weak solutions whose definition we recall in Section \ref{ssec:weak solutions}.
Let $(\e_t,\F)$, $t \in \R$, be a family of bilinear forms such that, for any $f,g \in \F$, the map $t \mapsto \e_t(f,g)$ is measurable, and such that $(\e_t,\F)$, $t \in \R$ satisfy Assumptions 0, \ref{as:e_t} and \ref{as:p=0} uniformly in $t$.

Recall from Remark \ref{rem:assumptions}(iv) that 
there exist constants $\alpha,c \in (0,\infty)$, depending on $C_1,C_2,C_3$, such that
\[ \e_t(f,f) + \alpha \int f^2 d\mu \ge c \| f \|_{\F}^2, \]
for all $t\in\R$ and all $f \in \F$. When $C_2=C_3=0$, then we may choose $\alpha = c = 0$.

\begin{proposition}\textbf{\em (\cite[Chap.~3, Theorem 4.1 and Remark 4.3]{LM72}.)}
For every $f \in L^2(X,\mu)$ there exists a unique weak solution $u$ of the initial value problem
\begin{align} 
\begin{split} \label{eq:initial value problem}
& \frac{\partial}{\partial t} u = L_t u  \quad \textrm{ on } (s,\infty) \times X, \\
& u(s,\cdot) = f \quad \textrm{ on } X. 
\end{split}
\end{align}
\end{proposition}


For any $s \leq t$ there exists a unique \emph{transition operator}
\[ T^s_t:L^2(X,\mu) \to L^2(X,\mu) \]
associated with $L_t - \frac{\partial}{\partial t}$ such that for every $f \in L^2(X,\mu)$ the unique solution $u$ of \eqref{eq:initial value problem} is 
given by $u:t \mapsto T^s_t f$. See, e.g., \cite[Section 1.3 - 1.4 and 2.4]{SturmII} and \cite{vCas11, NS91}. The map $t \mapsto T^s_t$ is strongly continuous on $[s,\infty)$. Furthermore, $\Vert T^s_t \Vert_{2 \to 2} \leq e^{\alpha(t-s)}$, and $T^r_t = T^s_t \circ T^r_s$ for any $r \leq s \leq t$. 

Similarly, there exists a transition operator $(S^*)^s_t$ for the time-reversed initial value problem $- \frac{\partial}{\partial t} v= L^*_t v$ on $(-\infty,s) \times X$, $v(s,\cdot) = f$ on $X$, where $L^*_t$ is the adjoint of $L_t$.
The transition operators $(T^s_t)$ and $(S^*)^s_t$ preserve positivity.

\begin{proposition} \textbf{\em (Existence of the heat propagator)} \label{prop:T^s_t}
Suppose $(\e,\F)$ satisfies (A1) and for every $a \in X$ there exists $Y = Y_a = B(a,2r_a)$ where $(\e,\F)$ satisfies (A2-$Y$), volume doubling and Poincar\'e inequality.
Then there exists a measurable positive 
function $p:\R \times X \times \R \times X \to [0,\infty)$ with the following properties:
\begin{enumerate}
\item
For every $t>s$, $\mu$-a.e. $x,y \in X$ and every $f \in L^1(X,\mu) + L^{\infty}(X,\mu)$,
 \[ T^s_t f(y) = \int_X p(t,y,s,z) f(z) \mu(dz) \]
and
 \[ (S^*)^t_s f(x) = \int_X p(t,z,s,x) f(z) \mu(dz). \]
\item
For every $s < \sigma < \tau$ and $\mu$-a.e. $x \in X$ the function
 \[ u: (t,y) \mapsto p(t,y,s,x) \]
is a global solution of the equation $L_t u = \frac{\partial}{\partial t} u$ on $(\sigma,\tau) \times X$ and for every $t > \tau > \sigma$ and $\mu$-a.e. $y \in X$ the function
\[ u: (s,x) \mapsto p(t,y,s,x) \]
is a global solution of the equation $L^*_s u = - \frac{\partial}{\partial s} u$ on $(\sigma,\tau) \times X$.
\item
For every $s<r<t$ and $\mu$-a.e. $x,y \in X$,
 \[ p(t,y,s,x) = \int_X p(t,y,r,z) p(r,z,s,x) \mu(dz). \]
\end{enumerate}
\end{proposition}

\begin{proof}
In the special case when each $\e_t$ is a Dirichlet form, the proposition is proved in \cite[Proposition 2.3]{SturmII}. See also \cite[Lemma 3.7]{GH} for a proof in the time-independent case. 
For the general case, see the \cite[Proposition 6.3]{LierlPHIf}.
\end{proof}
 
Let
\[ C_9 := C_2 + C_3^{1/2}  + C_5. \] 

\begin{lemma} \label{lem:for davies gaffney}
For any $t \in \R$ and any $f,g \in \F \cap L^{\infty}(X,\mu)$ with $d\Gamma(g,g) \leq \beta^2 g^2 d\mu$,
\begin{align*}
- \e_t(f,fg^2) 
& \leq 
K' (\beta^2 (1+C_4) + C_9) \int f^2 g^2 d\mu,
\end{align*}
where $K' \in (0,\infty)$ depends only on $C_1$.
\end{lemma}

\begin{proof}
By Lemma \ref{lem:SUP sym2} and Assumption \ref{as:e_t}, we have for any $k,k_1,k_2,k_3 > 0$,
\begin{align*}
- \e(f,fg^2)
& \leq 
\left( 4 k C_1 + k_2 C_4 + k_3 \right) \int f^2 d\Gamma(g,g) \\
& \quad
- \left( \frac{1}{C_1} \left( 1 - \frac{1}{k} \right) - \frac{2 }{k_1} - \frac{2 }{k_2} \right) \int g^2 d\Gamma(f,f) \\
& \quad
+ \left( 2 C_3^{1/2} + 2 k_1 C_2 + \frac{2 C_5}{k_3} \right) \int f^2 g^2 d\mu.
\end{align*}
Choose $k$, $k_1$, $k_2$ so that $\frac{1}{C_1} \left( 1 - \frac{1}{k} \right) - \frac{2 }{k_1} - \frac{2 }{k_2} = 0$. Then the assertion follows.
\end{proof}

\begin{theorem}  \textbf{\em (Upper bound)}
\label{thm:upper HKE}
Suppose $(\e,\F)$ satisfies (A1), (A2), and for every $a \in X$ there exists $Y = Y_a = B(a,2r_a)$ where $(\e,\F)$ satisfies volume doubling and Poincar\'e inequality.
Then for all $t>s$ and $x,y \in X$,
\begin{align*}
p(t,y,s,x)
\leq  C \frac{ \exp\left(- c \frac{d(x,y)^2}{t-s} + K'C_9(t-s) \right)}{V(x,\tau_x)^{\frac{1}{2}} V(y,\tau_y)^{\frac{1}{2}}},
\end{align*}
where $\tau_x = \sqrt{t-s} \wedge r_x$, $\tau_y = \sqrt{t-s} \wedge r_y$. The constant $C > 0$ depends only $C_1$--$C_5$, on an upper bound on $C_9(\tau_x^2 + \tau_y^2)$, and on $D_Y$, $P_Y$ for $Y=Y_x$ and for $Y=Y_y$. $c$ depends only on $C_1$ and $C_4$. $K'$ depends only on $C_1$.
\end{theorem}

\begin{proof}
We follow \cite[Theorem 2.4]{SturmII}.
Let $\psi \in \F_{\mbox{\tiny{c}}} \cap L^{\infty}$ with $d\Gamma(\psi,\psi) \leq d\mu$. 
By \eqref{eq:chain rule for Gamma}, $e^{\psi} \in \F_{\mbox{\tiny{loc}}} \cap L^{\infty}(X,\mu)$. 

Let $\beta,s,t \in \R$ and $f \in L^2(X,\mu) \cap L^{\infty}(X.\mu)$. Note that $T^s_r ((e^{-\beta \psi}f) \in \F \cap L^{\infty}$ for all $r \in [s,t]$.
We have
\begin{align*}
\Vert e^{\beta \psi} T^s_t(e^{-\beta \psi} f) \Vert_2^2 - \Vert f \Vert_2^2
& = \int_s^t \frac{d}{dr} \Vert e^{\beta \psi} T^s_r(e^{-\beta \psi} f) \Vert_2^2 dr \\
& = 2 \int_s^t \int  \frac{\partial}{\partial r} \left( T^s_r(e^{-\beta \psi} f) \right) e^{2\beta \psi} T^s_r(e^{-\beta \psi} f) d\mu \, dr \\
& = 
- 2 \int_s^t \e_r\big( T^s_r(e^{-\beta \psi} f),e^{2\beta \psi}T^s_r(e^{-\beta \psi} f) \big) dr \\
& \leq 
K'(\beta^2 (1+C_4) + C_9)
\int_s^t \Vert e^{\beta \psi}T^s_r(e^{-\beta \psi} f) \Vert_2^2 dr,
\end{align*}
by Lemma \ref{lem:for davies gaffney}, where $K' \in (0,\infty)$ depends only on $C_1$.
An application of Gronwall's lemma yields the Davies-Gaffney estimate
\begin{align*}
\Vert e^{\beta \psi} T^s_t(e^{-\beta \psi} f) \Vert_2^2
\leq 
e^{K'(\beta^2 (1+C_4) + C_9) (t-s)}
\Vert f \Vert_2^2.
\end{align*}
Now the assertion follows by repeating \cite[Proof of Theorem 2.4]{SturmII} with $K = K' (1+C_4)$, $k\lambda = - K'C_9$ and $(1+|k \lambda| r_i^2) = (A_1 + A_2 C_9 r_i^2)$, with $A_1$, $A_2$ as in Section \ref{ssec:Mean value estimates}. 
\end{proof}

\begin{definition}
For a ball $B=B(a,r) \subset X$, the Dirichlet-type forms on $B$ are defined as
\[ \e^D_{B,t}(f,g) = \e_t(f,g),  \quad f,g \in D(\e^D_B), \]
where the domain $D(\e^D_B) = \F^0(B)$ is defined as the closure of $\F \cap \mathcal C_{\mbox{\em \tiny{c}}}(B)$ in $\F$ for the norm $\| \cdot \|_{\F}$.
Let $T^D_B(t,s)$ be the associated transition operator and $p^D_B(t,y,s,x)$ the Dirichlet propagator.
\end{definition}

\begin{theorem}\textbf{\em (Estimates for the Dirichlet heat propagator)} \label{thm:basic p^D_B estimate} 
Suppose $(\e,\F)$ satisfies  {\em(A1), (A2)}, volume doubling and Poincar\'e inequality on $Y = B(a,2r_a)$ for some $a \in X$, $r_a >0$.
Let $B=B(a,r_a/2)$.
\begin{enumerate}
\item
For any fixed $\epsilon \in (0,1)$ there are constants $c', C' \in (0,\infty)$ such that for any $x,y \in B(a,(1-\epsilon)r_a/2)$ and $0 < \epsilon (t-s) \leq (r_a/2)^2$, the Dirichlet propagator $p^D_B$ is bounded below by
 \[  p^D_B(t,y,s,x) \geq \frac{c'}{V(x,\sqrt{t-s} \wedge R_x)} \exp\left( - C' \frac{d(x,y)^2}{t-s} \right), \]
where $R_x = d(x,\partial B)/2$.
\item 
There exist constants $c, C \in (0,\infty)$ such that for any $x,y \in B$, $t > s$, the Dirichlet propagator $p^D_B$ is bounded above by
\begin{equation} \label{eq:iii}
 p^D_B(t,y,s,x) 
\leq C \frac{\exp \left( - c \frac{d(x,y)^2}{(t-s)} + K' C_9 (t-s) \right)}
            {V(x,\sqrt{t-s} \wedge (r_a/2))^{1/2} V(y,\sqrt{t-s} \wedge (r_a/2))^{1/2}}.
\end{equation}
\end{enumerate}
The constants $c',C'$ depend only on $C_1$--$C_7$, $D_Y$, $P_Y$ and on an upper bound on $(C_2+C_3^{1/2} + C_5 + C_7)( (t-s) \wedge r_a^2)$.

The constant $C$ depends on $C_1$--$C_5$, on an upper bound on $C_9((t-s) \wedge r_a^2)$, and on $D_Y$, $P_Y$. The constant $c$ depends only on $C_1$ and $C_4$. $K'$ depends only on $C_1$.
\end{theorem}

\begin{proof} Statement (ii) follows from Theorem \ref{thm:upper HKE} and the set monotonicity of the heat propagator (see \cite[Proposition 6.8]{LierlPHIf}).
To show the on-diagonal estimate in (i) we follow the proof of \cite[Theorem 5.4.10]{SC02}. 
Let $0 < \epsilon (t-s) \leq (r_a/2)^2$ and $x \in B(a,(1-\epsilon)r_a/2)$. Let $r = \sqrt{t-s} \wedge R_x$. Let $\psi$ be a smooth function such that $0 \leq \psi \leq 1$, $\psi=1$ on $B(x,r)$ and $\psi=0$ on $X \setminus B(x,2r)$. Define
\begin{align*}
u(t,y) = \begin{cases} T^D_B(t,s) \psi(y) \quad & \textrm{ if } t > s, \\
                           \psi(y) & \textrm{ if } t \leq s.
         \end{cases}
\end{align*}
One can show that $u$ is a local weak solution of
\[ \hat L_t u = \frac{\partial}{\partial t} u \quad \textrm{ on } Q' = (-\infty,+\infty) \times B(x,r), \]
where
\begin{align*}
 \hat L_t = \begin{cases} L_t \quad & \textrm{ if } t > s, \\
                     L  & \textrm{ if } t \leq s.
       \end{cases}
\end{align*}
Applying the parabolic Harnack inequality of Theorem \ref{thm:local VD+PI = local HI} to $u$ and then to $p^D_B(\cdot,\cdot,s,z)$, we get
\begin{align*}
1 = u(s,x)
& \leq  C' \, u(s+(t-s)/2,x) \\
& =     C' \int  p^D_B(s+(t-s)/2,x,s,z)  \psi(z) \mu(dz) \\
& \leq  C' \int_{B(x,2r)}  p^D_B(s+(t-s)/2,x,s,z)  \mu(dz) \\
& \leq C' \, V(x,2r) \, p^D_B(s+(t-s)/2,x,s-(t-s)/2,x).
\end{align*}
The constant $C'$ changes from line to line.
Using volume doubling, we get
 \[ p^D_B(t,x,s,x) \geq \frac{C'}{V(x,r)}. \] 
For the off-diagonal estimate, see the proof of \cite[Theorem 4.8]{SturmIII}, and apply the parabolic Harnack inequality of Theorem \ref{thm:local VD+PI = local HI}.
\end{proof}

\begin{theorem} \textbf{\em (Lower bound)}
Suppose $(\e,\F)$ satisfies  {\em(A1), (A2-$Y$)}, volume doubling and Poincar\'e inequality on $Y = B(a,2r_a)$ for some $a \in X$, $r_a >0$. 
Then there are constants $c, C, C' >0$ such that for all $x,y \in B(a,r_a/2)$ and $t>s$, we have
\begin{align*}
p(t,y,s,x)
\geq c  \frac{\exp\left( -C \frac{d(x,y)^2}{t-s} - \frac{C'}{r_a^2} (t-s) \right)}{V(x,\sqrt{t-s} \wedge (r_a/2))}.
\end{align*}
The constants $c,C,C'$ depend only on $C_1$--$C_7$, on an upper bound on $(C_2+C_3^{1/2}+C_5+C_7)r_a^2$, and on $D_Y$, $P_Y$.
\end{theorem}

\begin{proof}
From Theorem \ref{thm:basic p^D_B estimate}(i) we obtain an on-diagonal bound for $t-s < r_a^2$.
The off-diagonal estimate (for any $t>s$) follows from the parabolic Harnack inequality.
\end{proof}

The following corollary provides global two-sided bounds for the heat propagator in situations that generalize the following model case in which the heat equation is given by
\[ \frac{\partial}{\partial t} u = \sum_{i,j=1}^n
\frac{\partial}{\partial x_j} \left( a_{i,j}(t,\cdot)\frac{\partial}{\partial x_i} u \right) \]
 on $\R^n$, with bounded 
measurable time-dependent coefficients $(a_{i,j})$ that are uniformly elliptic but not necessarily symmetric. 
\begin{corollary} \textbf{\em (Two-sided global bounds in the strongly local case)} \label{cor:global HKE}
Suppose $C_2=C_3=C_5=C_7=0$. Suppose $(\e,\F)$ satisfies  {\em(A1), (A2)}, volume doubling and Poincar\'e inequality on $X$. Then 
there are constants $c', C', C, c \in (0,\infty)$ such that for any $x,y \in X$ and $t>s$, we have
\begin{align*}
c'  \frac{\exp\left( -C' \frac{d(x,y)^2}{t-s}  \right)}{V(x,\sqrt{t-s})}
\leq p(t,y,s,x)
\leq  C \frac{ \exp\left(- c \frac{d(x,y)^2}{t-s} \right)}{V(x,\sqrt{t-s})}.
\end{align*}
The constants $c', C', C, c$ depend only on $C_1$, $C_4$, $C_6$, $D_X$, $P_X$.\end{corollary}
Note that, under the assumption of Corollary \ref{cor:global HKE},  
Corollary \ref{cor:Hoelder} provides assorted global time-space 
H\"older continuity estimates for the heat kernel.

\begin{remark}
For the sake of simplicity, in the results described above, we have not tried to capture the sharpest possible Gaussian upper bound as far as the constant in front of  $\frac{d(x,y)^2}{(t-s)}$ in the exponential Gaussian factor is concerned. The reason is that this question is rather unnatural and somewhat irrelevant in the present context of time-dependent forms. We note that, with the parabolic Harnack inequality of Theorem \ref{thm:local VD+PI = local HI} established, it is possible to obtain more detailed Gaussian upper bounds in spirit of \cite{SturmII} and \cite[Section 5.2.3]{SC02} by following the line of reasoning used in these references. 
\end{remark}

\section{Proofs} \label{sec:proofs}
\subsection{Proof of Theorem \ref{thm:estimate subsol p>2}}

\subsubsection{Heuristics}

The proof becomes transparent if we first look at the case when $p=2$, $\e$ is symmetric strongly local, 
and $u$ has a weak time-derivative $\frac{\partial u}{\partial t}$ that is represented by a function in $L^2(I \to \F)$.
 Writing the supremum $\left( \sup_{t \in I^-_{\sigma'}} \int u^p \psi^2 d\mu \right)$ as the integral of its time-derivative, we can apply the hypothesis that $u$ is a weak subsolution of the heat equation. We obtain that, for a suitable choice of $s_0 \in I$ so that $\chi(s_0)=0$,
\begin{align*}
& \sup_{t \in I^-_{\sigma'}} \int u^2 \psi^2 d\mu \\
& = \sup_{t_0 \in I^-_{\sigma'}} \int_{s_0}^{t_0} \left[ \int \frac{\partial u}{\partial t} 2 u \psi^2 \chi d\mu + \int u^2 \psi^2 \chi' d\mu \right] \, dt \\
& \leq \sup_{t_0 \in I^-_{\sigma'}} \left[ \int_{s_0}^{t_0} -2 \e_t(u,u\psi^2) dt + \int_{s_0}^{t_0} \int u^2 \psi^2 \chi' d\mu \, dt \right] \\
& \leq -k \sup_{t_0 \in I^-_{\sigma'}} \left[ \int_{s_0}^{t_0} \int \psi^2 d\Gamma(u,u) dt + K \int_{s_0}^{t_0} \int u^2 d\Gamma(\psi,\psi) +  \int_{s_0}^{t_0} \int u^2 \psi^2 \chi' d\mu \, dt \right],
\end{align*}
for some constants $k,K > 0$. We have applied \eqref{eq:CS} and \eqref{eq:chain rule for Gamma} in the last inequality. Rearranging the terms and making use of the particular choice of $\chi$ easily yields the assertion for this special case. 

In the general case $p > 2$, we will approximate $u^{p-2}$ in terms of bounded functions $u_n:=u \wedge n$ to ensure integrability. This approximation is complicated by the fact that we do not know a priori whether
 $u_n$ is a weak subsolution. Another challenge then arises from the application of the chain rule for the weak time derivative in the 
above argument. This problem will be resolved by taking Steklov averages. Finally, to treat the case of non-symmetric forms, we make use 
of Assumption 0, Assumption \ref{as:e_t}, and apply the estimates that we proved in Section \ref{ssec:algebraic computations}.

\subsubsection{The symmetric strongly local case} \label{ssec:proof sym strongly local}
In order to make the proof accessible for a wider audience, we first give the proof of Theorem \ref{thm:estimate subsol p>2} in the symmetric strongly local case (see \eqref{eq:sym strongly local case}). 

\begin{proof}
Let $n$ be a positive integer. We keep $n$ fixed until we let $n \to \infty$ at the very end of the proof.
Let $u_n := u \wedge n$. We will make use of a function $\mathcal H = \mathcal{H}_n$ which was constructed in \cite{AS67} in such a way that $\mathcal{H}_n'(v) = v (v \wedge n)^{p-2}$. More specifically, let $\mathcal H_n: \R  \to \R$,
\begin{align*}
\mathcal{H}_n(v) := \begin{cases} & \frac{1}{p} v^2 (v \wedge n)^{p-2}, \qquad\qquad\qquad\qquad \, \textrm{if } v \leq n, \\
                               & \frac{1}{2} v^2 (v \wedge n)^{p-2} + n^p \left( \frac{1}{p} - \frac{1}{2} \right), \qquad \textrm{if } v > n.
                 \end{cases}
\end{align*} 
We will use the approach of Cipriani and Grillo \cite{CG01}; for a real number $0 < h < (\tau - \tau') r^2$ let
\[  u_h(t) := \frac{1}{h} \int_t^{t+h} u(s) ds, \quad t \in (a- \tau r^2, a + \tau' r^2),\]
be the Steklov average of $u$.
In this proof, the subscript of the Steklov average will always be denoted as $h$ and should not be confused with the integer $n$.

Recall that $u_h \in L^1((a- \tau r^2, a + \tau' r^2) \to \F)$ by Lemma \ref{lem:steklov convergence}. We will write $u_h(t,\cdot)$ for $u_h(t)$. By Lemma \ref{lem:SUP sym2}, $\mathcal{H}_n(u(t,\cdot)), \mathcal{H}_n(u_h(t,\cdot)) \in \F_{\mbox{\tiny{loc}}}$ at almost every $t$. 
The Steklov average $u_h$ has a strong time-derivative
\[ \frac{\partial}{\partial t} u_h(t,x) = \frac{1}{h} \big( u(t+h,x) - u(t,x) \big). \] 

Let $s_0 = a -\frac{1+\sigma}{2}\tau r^2$. For a.e. $t_0 \in I^-_{\sigma'}$ and $J = (s_0, t_0)$, we obtain
\begin{align*}
& \int_X \mathcal H_n( u_h(t_0,\cdot) )  \psi^2 d\mu \\
& =
\int_J \frac{d}{d t} \left( \int_X \mathcal H_n( u_h ) \psi^2 \chi \, d\mu \right) dt \\
& =
\int_J \int_X \frac{\partial}{\partial t} \left( u_h \right) \mathcal H'_n(u_h) \psi^2 \chi \, d\mu \, dt  
+ \int_J \int_X  \mathcal H_n(u_h) \psi^2 \chi' \, d\mu \, dt \\
& = 
\int_J \frac{1}{h} \int_X \big( u(t+h,\cdot) - u(t,\cdot) \big) \mathcal H'_n(u_h(t,\cdot)) \psi^2 \chi \,  d\mu \, dt 
 + \int_J \int_X  \mathcal H_n(u_h) \psi^2 \chi' \, d\mu \, dt.
\end{align*}
Next, we make use of the assumption that $u$ is a local very weak subsolution of the heat equation. In fact, this is the only place in this proof where the heat equation is used. We obtain
\begin{align} \label{eq:steklov subsol estimate lhs}
& \int_X \mathcal H_n( u_h(t_0,\cdot) )  \psi^2 d\mu  \\
& \leq 
- \int_J \frac{1}{h} \int_t^{t+h}\e_s( u(s,\cdot), \mathcal H'_n(u_h(t,\cdot)) \psi^2 ) \chi(t) ds \, dt
+ \int_J \int_X  \mathcal H_n(u_h) \psi^2 \chi' \, d\mu \, dt \nonumber \\
& \leq \label{eq:want to let h to 0 part1}
- \int_J \frac{1}{h} \int_t^{t+h}\e_s( u(s,\cdot), [\mathcal H'_n(u_h(t,\cdot)) - \mathcal H'_n(u(t,\cdot))] \psi^2 ) ds \, \chi(t) dt \\
& \quad \label{eq:want to let h to 0 part2}
- \int_J \frac{1}{h} \int_t^{t+h} \e_s( u(s,\cdot) - u(t,\cdot), \mathcal H'_n(u(t,\cdot)) \psi^2 ) ds \, \chi(t) dt \\
& \quad \label{eq:steklov subsol estimate iii}
- \int_J \frac{1}{h} \int_t^{t+h}\e_s( u(t,\cdot), \mathcal H'_n(u(t,\cdot)) \psi^2 ) ds \, \chi(t) dt \\
& \quad \label{eq:steklov subsol estimate iv}
+ \int_J \int_X  \mathcal H_n(u_h) \psi^2 \chi' \, d\mu \, dt.
\end{align}
We now proceed in three steps. 
First, we estimate the integrand in \eqref{eq:steklov subsol estimate iii}. 
Second, we show that \eqref{eq:want to let h to 0 part1} and \eqref{eq:want to let h to 0 part2} vanish in the limit as $h \to 0$. 
Third, we let $h \to 0$ on both sides of the above inequality while applying the estimate obtained in Step 1. Then we take the supremum over all $t_0 \in I_{\sigma'}^-$ and let $n \to \infty$. This will conclude the proof.

\paragraph{STEP 1.}
We want to estimate the double integral \eqref{eq:steklov subsol estimate iii}. Let us keep $s$ and $t$ fixed for a moment and consider
the integrand. Write $u$ for $u(t,\cdot)$ and $u_n$ for $u_n(t,\cdot)$. 

By Lemma \ref{lem:SUP sym2}, the Cauchy-Schwarz inequality \eqref{eq:CS}, and \eqref{eq:sym strongly local case}, we have for any $k_1 > 0$,
\begin{align*}  
& \quad - \e^{\mbox{\tiny{s}}}_s (u, u u_n^{p-2}  \psi^2) \nonumber \\
& \leq \frac{4k_1}{p-1} \int u^2 u_n^{p-2}  d\Gamma_s(\psi,\psi) 
        - \left( 1 - \frac{p-1}{k_1} \right) \int  u_n^{p-2}  \psi^2  d\Gamma_s(u,u) \nonumber \\
  & \quad  - (p-2) \int u_n^{p-2}  \psi^2   d\Gamma_s(u_n,u_n) \nonumber \\
& \leq \frac{4 k_1 C_1}{p-1} \int u^2 u_n^{p-2}  d\Gamma(\psi,\psi) 
        - C_1^{-1} \left(1 - \frac{p-1}{k_1} \right) \int  u_n^{p-2}  \psi^2  d\Gamma(u,u) \nonumber \\
  & \quad - (p-2) C_1^{-1} \int u_n^{p-2}  \psi^2   d\Gamma(u_n,u_n).
\end{align*}
Rearranging,
\begin{align*} 
& \quad   - \e_s(u,u u_n^{p-2} \psi^2)  
 +  \frac{p-2}{C_1}  \int \psi^2 u_n^{p-2} d\Gamma(u_n,u_n) 
 + \left( \frac{1}{C_1} - \frac{p-1}{C_1 k_1}  \right)  \int u_n^{p-2} \psi^2 d\Gamma(u,u)  \\
& \leq 
 \left( \frac{4k_1 C_1}{p-1}  \right)  \int u^2 u_n^{p-2} d\Gamma(\psi,\psi).
\end{align*}
This inequality holds for any $s \in (t,t+h)$ (recall that we fixed $s$ and $t$ at the beginning of Step 1). Now we take the Steklov average at $t$, and then we integrate over $J$, on both sides of the inequality. This results in the following estimate for \eqref{eq:steklov subsol estimate iii}. 
\begin{align} \label{eq:result of Step 1 sym strongly local}
& - \int_J \frac{1}{h} \int_t^{t+h}\e_s( u(t,\cdot), \mathcal H'_n(u(t,\cdot)) \psi^2 ) ds \, \chi(t) dt \\
& \quad + \frac{p-2}{C_1} \int_{J} \int \chi \psi^2 u_n^{p-2} d\Gamma(u_n,u_n) dt \nonumber\\
& \quad + \left( \frac{1}{C_1} - \frac{p-1}{C_1 k_1}  \right) \int_{J} \int u_n^{p-2} \chi \psi^2 d\Gamma(u,u) dt \nonumber\\
& \le
 \frac{4k_1 C_1}{p-1}  \int_J \int u^2 u_n^{p-2} d\Gamma(\psi,\psi) dt \nonumber.
\end{align}

\paragraph{STEP 2.}
We show that, for some sequence of reals $h$ that tends to $0$, \eqref{eq:want to let h to 0 part1} and \eqref{eq:want to let h to 0 part2} tend to $0$ as $h \to 0$. 
By Assumption 0, Cauchy-Schwarz inequality,  and Corollary \ref{cor:steklov convergence in L^2},
\begin{align*}
& \lim_{h \to 0} \int_J \frac{1}{h} \int_t^{t+h} \e_s( u(s,\cdot) - u(t,\cdot), \mathcal H'_n(u(t,\cdot)) \psi^2 ) ds \, \chi(t)  dt \\
& \leq
\lim_{h \to 0} 
C_* \int_J \frac{1}{h} \int_t^{t+h} || u(s,\cdot) - u(t,\cdot) ||_{\F} \, ds \, ||\mathcal H'_n(u(t,\cdot)) \psi^2  ||_{\F} \, \chi(t)dt \\
& = 0.
\end{align*}
This shows that \eqref{eq:want to let h to 0 part2} goes to $0$ as $h \to 0$.

Next, we show that \eqref{eq:want to let h to 0 part1} goes to $0$ as $h \to 0$.
For the purpose of this proof, let 
\[ ||u(s,\cdot)||_{\F,\sigma B}^2 := \int_{\sigma B} d\Gamma(u(s,\cdot),u(s,\cdot)) + \int_{\sigma B} u(s,\cdot)^2 d\mu. \]
By the locality of $\e_s$, Assumption 0, and Cauchy-Schwarz inequality,
\begin{align} \label{eq:want to let h to 0 part3}
& \int_J \frac{1}{h} \int_t^{t+h} \e_s\big( u(s,\cdot), [\mathcal H'_n(u_h(t,\cdot)) - \mathcal H'_n(u(t,\cdot))]\psi^2 \big) ds \, \chi(t) dt \nonumber \\
& \leq
C_* \int_J \frac{1}{h} \int_t^{t+h} ||u(s,\cdot)||_{\F,\sigma B}  \, || [\mathcal H'_n(u_h(t,\cdot)) - \mathcal H'_n(u(t,\cdot))]\psi^2 ||_{\F} \, ds \, \chi(t) dt \nonumber \\
& \leq
C_* \left(\int_J \left( \frac{1}{h} \int_t^{t+h} ||u(s,\cdot)||_{\F,\sigma B} ds \right)^2 \chi(t) dt \right)^{1/2} \nonumber \\
& \quad \left( \int_J || [ \mathcal H'_n(u_h(t,\cdot)) - \mathcal H'_n(u(t,\cdot))]\psi^2 ||_{\F}^2  \chi(t) dt \right)^{1/2}.
\end{align}
We will show that the right hand side of \eqref{eq:want to let h to 0 part3} goes to $0$ as $h \to 0$, by considering the two factors separately. We may assume that $h$ is small, more precisely $h < \frac{1-\sigma}{2} \tau r^2$ so that $\chi(t) = 0$ for $t \in (-\infty, s_0 + h]$.  
By Lemma \ref{lem:steklov convergence}, the dominated convergence theorem, Lemma \ref{lem:steklov integrated} and Jensen's inequality, we have
\begin{align*}
\lim_{h \to 0} \int_J \left( \frac{1}{h} \int_t^{t+h} ||u(s,\cdot)||_{{\F,\sigma B}} ds \right)^2 \chi(t) dt  \leq
 \int_J  ||u(t,\cdot)||_{{\F,\sigma B}}^2 \chi(t) dt  < \infty.
\end{align*}

We show that the last integral on the right hand side of \eqref{eq:want to let h to 0 part3} tends to $0$ as $h \to 0$. Let us first 
consider the integrand at some fixed $t \in J$. We write $u_h$ for an $L^2$-representative of $u_h(t,\cdot)$, and $u$ for an
 $L^2$-representative of $u(t,\cdot)$.
 Note that, we cannot apply the chain rule \eqref{eq:chain rule for Gamma} with $\Phi=\mathcal H'$ because $\mathcal H'$ is not 
in $\mathcal C^2(\R)$.

We claim that

\begin{align}
\begin{split} \label{eq:result of shortcut lemma}
& \int d\Gamma((\mathcal H'_n(u_h) - \mathcal H'_n(u))\psi^2,(\mathcal H'_n(u_h) - \mathcal H'_n(u))\psi^2)  \\
& \leq
\int [(u_h \wedge n)^{p-1} - (u \wedge n)^{p-1}]^2 d\Gamma(\psi^2,\psi^2)  \\
& \quad 
+ (p-1)^2 n^{2(p-2)} \int \psi^2 d\Gamma( u_h \wedge n - u \wedge n, u_h \wedge n - u \wedge n)  \\
& \quad + 
(p-1)^2 \int [(u_h \wedge n)^{p-2} - (u \wedge n)^{p-2}]^2 \psi^2 d\Gamma(u \wedge n,u \wedge n)  \\
& \quad +
n^{2(p-2)} \int d\Gamma([(u_h - n)^+ - (u - n)^+]\psi^2,[(u_h - n)^+ - (u - n)^+]\psi^2).
\end{split}
\end{align}
We introduce a shortcut notation $A = u_h (u_h \wedge n)^{p-2} - u (u \wedge n)^{p-2}$.
It is easy to see that
\begin{align*}
A 
& = u_h (u_h \wedge n)^{p-2} - u (u \wedge n)^{p-2} \\
& = (u_h \wedge n)^{p-1} - (u \wedge n)^{p-1} + (u_h - n)^+ (u_h \wedge n)^{p-2} - (u - n)^+ (u \wedge n)^{p-2} \\
& = (u_h \wedge n)^{p-1} - (u \wedge n)^{p-1} + n^{p-2} [(u_h - n)^+ - (u - n)^+ ].
\end{align*}
By the chain rule \eqref{eq:chain rule for Gamma} for $\Gamma$,
\begin{align*}
\int d\Gamma(A\psi^2,A\psi^2)
& \leq
\int (u_h \wedge n)^{p-1} - (u \wedge n)^{p-1} d\Gamma(\psi^2,A \psi^2) \\
& \quad 
+ (p-1) \int (u_h \wedge n)^{p-2} \psi^2 d\Gamma( u_h \wedge n - u \wedge n, A \psi^2) \\
& \quad + 
(p-1) \int [(u_h \wedge n)^{p-2} - (u \wedge n)^{p-2}] \psi^2 d\Gamma(u \wedge n, A\psi^2 ) \\
& \quad +
n^{p-2} \int d\Gamma([(u_h - n)^+ - (u - n)^+]\psi^2, A\psi^2).
\end{align*}
Applying Cauchy-Schwarz inequality \eqref{eq:CS} to each integral on the right hand side,
dividing both sides by $\left( \int d\Gamma(A\psi^2,A\psi^2) \right)^{1/2}$, and squaring both sides, 
we obtain that inequality \eqref{eq:result of shortcut lemma} holds for almost every $t \in J$. Now we multiply both sides of 
inequality \eqref{eq:result of shortcut lemma} by $\chi(t)$ and integrate over $J$. Taking a sequence 
of positive reals $h$ that converges to $0$, we obtain
\begin{align} 
& \lim_{h \to 0} \int_J  \int d\Gamma((\mathcal H'_n(u_h) - \mathcal H'_n(u))\psi^2,(\mathcal H'_n(u_h) - \mathcal H'_n(u))\psi^2) \chi(t) dt \nonumber \\
& \leq 
\lim_{h \to 0} \int_J  \int [(u_h \wedge n)^{p-1} - (u \wedge n)^{p-1}]^2 d\Gamma(\psi^2,\psi^2) \chi(t) dt\\
& \quad 
+ (p-1)^2 n^{2(p-2)}  \lim_{h \to 0} \int_J  || u_h \wedge n - u \wedge n ||_{\F}^2 \chi(t) dt\\
& \quad + 
(p-1)^2  \lim_{h \to 0} \int_J \int [(u_h \wedge n)^{p-2} - (u \wedge n)^{p-2}]^2 \psi^2 d\Gamma(u \wedge n,u \wedge n) \chi(t) dt\\
& \quad +
n^{2(p-2)}  \lim_{h \to 0} \int_J  || [(u_h - n)^+ - (u - n)^+]\psi^2 ||_{\F}^2 \chi(t) dt.
\end{align}
Since $u_h \to u$ in $L^2(I \to \F)$ by Corollary \ref{cor:steklov convergence in L^2}, it follows from the locality of the reference form that $(u_h -n)^+ \to (u -n)^+$ and $u_h \wedge n \to u \wedge n$ in $L^2(I \to \F)$ as $h \to 0$. Passing to a subsequence of reals $h$ that goes to $0$, Proposition \ref{prop:dt-qe convergence} yields that for almost every $t \in J$, $(\widetilde{u_h}(t,x) -n)^+ \to (\widetilde{u}(t,x) -n)^+$ and $\widetilde{u_h}(t,x) \wedge n \to \widetilde{u}(t,x) \wedge n$ at quasi-every $x \in X$.
We obtain that
\begin{align*} 
 \int_J ||  [\mathcal H'_n(u_h(t,\cdot)) - \mathcal H'_n(u(t,\cdot))] \psi^2||_{\F}^2 \chi(t) dt \longrightarrow 0 \quad \mbox{ as } h \to 0.
\end{align*}
Thus, \eqref{eq:want to let h to 0 part1} tends to $0$ as we let $h \to 0$. 

\paragraph*{STEP 3.}
At almost every $t \in (s_0,t_0]$, $u_h(t)$ converges to $u(t)$ in $\F$ by Lemma \ref{lem:steklov convergence}; hence, passing to a subsequence, $u_h(t,x) \to u(t,x)$ pointwise at quasi-every $x \in X$. Therefore $\mathcal{H}_n(u_h(t,\cdot)) \to \mathcal{H}_n(u(t,\cdot))$ quasi-everywhere, and by dominated convergence also in $L^1(X,\mu)$.

Letting $h \to 0$ in \eqref{eq:steklov subsol estimate lhs} - \eqref{eq:steklov subsol estimate iv}, and applying \eqref{eq:result of Step 1 sym strongly local} and the fact \eqref{eq:want to let h to 0 part1} and \eqref{eq:want to let h to 0 part2} vanish in the limit (as proved in Step 2), we obtain that
\begin{align*} 
& \quad  \int \mathcal{H}_n(u(t_0,\cdot)) \psi^2 d\mu 
 +  \frac{p-2}{C_1}  \int_{J} \int \chi \psi^2 u_n^{p-2} d\Gamma(u_n,u_n) dt \\
& \quad + \left( \frac{1}{C_1} - \frac{p-1}{C_1 k_1} \right) \int_{J} \int u_n^{p-2} \chi \psi^2 d\Gamma(u,u) dt \\
& \leq 
 \frac{4k_1 C_1}{p-1} \int_J \int u^2 u_n^{p-2} d\Gamma(\psi,\psi) dt 
+ \int_J \int_X  \mathcal H_n(u) \psi^2 \chi' \, d\mu \, dt.
\end{align*}
We make an appropriate choice of the constant $k_1$ so that $k_1$ is of order $p^2$.
We take the supremum over all $t_0 \in I^-_{\sigma'}$ on both sides of the above inequality, multiply each side by $p$, and take the limit as $n \to \infty$. This completes the proof.
\end{proof}

\begin{remark} \label{rem:no time chain rule with H}
Even in the case when $u$ has a weak time-derivative as defined in Section \ref{ssec:weak time-derivative}, we are not aware of a direct proof of Theorem \ref{thm:estimate subsol p>2} in the literature
that avoids the Steklov averages, unless we know a priori that the subsolution $u$ is locally bounded. 
The difficulty is that the chain rule of Proposition \ref{prop:time chain rule} for the weak time-derivative is not applicable with $\Phi = \mathcal H_n$ because $\mathcal H_n$ is not in
 $\mathcal C^2(\R)$.
\end{remark}

\subsubsection{The general case}

\begin{proof}
The strategy of the proof is the same as in the symmetric strongly local case.

The beginning of the proof until \eqref{eq:steklov subsol estimate iv} is exactly the same as in Subsection \ref{ssec:proof sym strongly local}, so we can jump right to Step 1.

\paragraph{STEP 1.}
We want to estimate 
\begin{align*} 
- \int_J \frac{1}{h} \int_t^{t+h}\e_s( u(t,\cdot), \mathcal H'_n(u(t,\cdot)) \psi^2 ) ds \, \chi(t) dt
\end{align*}
To this end, we decompose the bilinear form $\e_s$ into its symmetric strongly local part, its symmetric zero order part, and its skew-symmetric part. 
Let us keep $s$ and $t$ fixed for a moment and consider
the integrand. Write $u$ for $u(t,\cdot)$ and $u_n$ for $u_n(t,\cdot)$. 

We proceed to estimate separately each part in the 
decomposition
\begin{align} \label{eq:e u estimate}
\e_s(u,u u_n^{p-2} \psi^2) =  \e^{\mbox{\tiny{s}}}_s(u,u u_n^{p-2} \psi^2) + \e^{\mbox{\tiny{sym}}}_s(1,u^2 u_n^{p-2} \psi^2) + \e^{\mbox{\tiny{skew}}}_s(u,u u_n^{p-2} \psi^2).
\end{align}

By Lemma \ref{lem:SUP sym2} and Assumption 1(i), we have for any $k_1 > p-1$,
\begin{align}  \label{eq:e^s estimate}
& \quad - \e^{\mbox{\tiny{s}}}_s (u, u u_n^{p-2}  \psi^2) \nonumber \\
& \leq \frac{4k_1}{p-1} \int u^2 u_n^{p-2}  d\Gamma_s(\psi,\psi) 
        - \left( 1 - \frac{p-1}{k_1} \right) \int  u_n^{p-2}  \psi^2  d\Gamma_s(u,u) \nonumber \\
  & \quad  - (p-2) \int u_n^{p-2}  \psi^2   d\Gamma_s(u_n,u_n) \nonumber \\
& \leq \frac{4 k_1 C_1}{p-1} \int u^2 u_n^{p-2}  d\Gamma(\psi,\psi) 
        - C_1^{-1} \left(1 - \frac{p-1}{k_1} \right) \int  u_n^{p-2}  \psi^2  d\Gamma(u,u) \nonumber \\
  & \quad - (p-2) C_1^{-1} \int u_n^{p-2}  \psi^2   d\Gamma(u_n,u_n).
\end{align}
We will need the following estimate, which follows by strong locality and the chain rule for $\Gamma$.
\begin{align} \label{eq:uu_n^p-2/2}
& \int \psi^2  d\Gamma(u u_n^{\frac{p-2}{2}},u u_n^{\frac{p-2}{2}}) \nonumber \\
& = \int u_n^{p-2} \psi^2  d\Gamma(u - u_n,u - u_n) 
+ \int \psi^2  d\Gamma(u_n^{\frac{p}{2}},u_n^{\frac{p}{2}}) \nonumber \\
& = \int u_n^{p-2} \psi^2  d\Gamma(u,u) - \int u_n^{p-2} \psi^2  d\Gamma(u_n,u_n) \nonumber 
 + \frac{p^2}{4} \int u_n^{p-2} \psi^2  d\Gamma(u_n,u_n) \nonumber \\
& \leq  \int u_n^{p-2} \psi^2  d\Gamma(u,u) + \frac{ p^2}{4} \int u_n^{p-2} \psi^2  d\Gamma(u_n,u_n).
\end{align}
By Proposition \ref{prop:skew identity with p}, Assumption 1(iii), \eqref{eq:Gamma(fg)} and \eqref{eq:uu_n^p-2/2}, we have for any $k_2$, $k_3$, $k_4 >0$,
\begin{align} \label{eq:e^skew estimate}
& \quad - \e^{\mbox{\tiny{skew}}}_s(u,u u_n^{p-2}  \psi^2) \nonumber \\
& = - \e^{\mbox{\tiny{skew}}}_s(u u_n^{\frac{p-2}{2}},u u_n^{\frac{p-2}{2}}  \psi^2) 
    - \frac{2-p}{p} \e^{\mbox{\tiny{skew}}}_s(u_n^{p/2},u_n^{p/2}  \psi^2) \nonumber \\
& \quad  - \frac{2-p}{p} \e^{\mbox{\tiny{skew}}}_s(u_n^p  \psi^2,1) \nonumber \\
& \leq  k_2  \int u^2 u_n^{p-2}  d\Gamma(\psi,\psi)
       +  \left( k_3 \frac{(2-p)^2}{p^2} + \frac{2}{k_4} \right)  \int u_n^{p}  d\Gamma(\psi,\psi) \nonumber \\
  & \quad + \frac{C_4}{k_2} \int \psi^2  d\Gamma(u u_n^{\frac{p-2}{2}}, u u_n^{\frac{p-2}{2}})
       + \left( \frac{C_4}{k_3} + \frac{2}{k_4} \right) \int \psi^2  d\Gamma(u_n^{p/2}, u_n^{p/2}) \nonumber \\
  & \quad + \frac{C_5}{k_2} \int u^2 u_n^{p-2} \psi^2  d\mu +  \left( \frac{C_5}{k_3} + C_5 k_4 \frac{(2-p)^2}{p^2} \right) \int u_n^p \psi^2  d\mu \nonumber \\
& \leq  k_2  \int u^2 u_n^{p-2}  d\Gamma(\psi,\psi)
       +  \left( k_3 \frac{(2-p)^2}{p^2} + \frac{2}{k_4} \right)  \int u_n^{p}  d\Gamma(\psi,\psi) \nonumber \\
  & \quad + \frac{C_4}{k_2} \int u_n^{p-2} \psi^2  d\Gamma(u,u)
       + \frac{p^2}{4} \left(\frac{C_4}{k_2} + \frac{C_4}{k_3} + \frac{2}{k_4} \right) \int u_n^{p-2} \psi^2  d\Gamma(u_n, u_n) \nonumber \\
  & \quad + \frac{C_5}{k_2} \int u^2 u_n^{p-2} \psi^2  d\mu +  \left( \frac{C_5}{k_3} + C_5 k_4 \frac{(2-p)^2}{p^2} \right) \int u_n^p \psi^2  d\mu.
\end{align}
By Assumption 1(ii), \eqref{eq:Gamma(fg)} and \eqref{eq:uu_n^p-2/2}, we have uor any $k_5>0$,
\begin{align} \label{eq:e^sym estimate}
& - \e^{\mbox{\tiny{sym}}}_s(u^2 u_n^{p-2}  \psi^2,1) \nonumber \\
& \leq   \frac{2}{k_5} \int u^2 u_n^{p-2}  d\Gamma(\psi,\psi)
      + \frac{2}{k_5} \int \psi^2  d\Gamma(u u_n^{\frac{p-2}{2}},u u_n^{\frac{p-2}{2}}) \nonumber \\
 & \quad + \left( C_2 k_5 + 2C_3^{1/2} \right) \int u^2 u_n^{p-2} \psi^2  d\mu \nonumber \\
& \leq \frac{2}{k_5} \int u^2 u_n^{p-2}  d\Gamma(\psi,\psi)
 + \frac{2}{k_5} \int u_n^{p-2} \psi^2  d\Gamma(u,u) \nonumber \\
& \quad + \frac{p^2}{2k_5} \int u_n^{p-2} \psi^2  d\Gamma(u_n,u_n) \nonumber \\
& \quad + \left( C_2 k_5 + 2C_3^{1/2} \right) \int u^2 u_n^{p-2} \psi^2  d\mu.
\end{align}
Combining \eqref{eq:e u estimate}, \eqref{eq:e^s estimate}, \eqref{eq:e^skew estimate}, \eqref{eq:e^sym estimate} and rearranging the terms yields
\begin{align*} 
& \quad   - \e_s(u,u u_n^{p-2} \psi^2)    \\ 
& \quad + \left( \frac{p-2}{C_1} - \frac{p^2}{4} \left( \frac{C_4}{k_2} + \frac{C_4}{k_3} + \frac{2}{k_4} + \frac{2}{k_5} \right) \right)  \int \psi^2 u_n^{p-2} d\Gamma(u_n,u_n)  \\
& \quad + \left( \frac{1}{C_1} - \frac{p-1}{C_1 k_1} -\frac{C_4}{k_2} - \frac{2}{k_5} \right)  \int u_n^{p-2} \psi^2 d\Gamma(u,u)  \\
& \leq 
 \left( \frac{4k_1 C_1}{p-1} + k_2 + \frac{2}{k_5} \right)  \int u^2 u_n^{p-2} d\Gamma(\psi,\psi)  \\
& \quad +  \left( k_3 \frac{(2-p)^2}{p^2} + \frac{2}{k_4} \right)  \int u_n^{p} d\Gamma(\psi,\psi) \\
& \quad + \left( \frac{C_5}{k_2} + C_2 k_5 + 2C_3^{1/2}  \right)  \int u^2 u_n^{p-2} \psi^2 d\mu \\
& \quad + \left( \frac{C_5}{k_3} + C_5 k_4 \frac{(2-p)^2}{p^2} \right) \int u_n^p \psi^2 d\mu.
\end{align*}
This inequality holds for any $s \in (t,t+h)$ (recall that we fixed $s$ and $t$ at the beginning of Step 1). Now we take the Steklov average at $t$, and then we integrate over $J$, on both sides of the inequality. This results in an estimate for \eqref{eq:steklov subsol estimate iii}. Therefore, the estimate \eqref{eq:steklov subsol estimate lhs} - \eqref{eq:steklov subsol estimate iv} becomes
\begin{align} 
\begin{split} \label{eq:result of step 1 general case} 
& \quad  \int \mathcal{H}_n(u_h(t_0,\cdot)) \psi^2 d\mu \\
& \quad + \left( \frac{p-2}{C_1} - \frac{p^2}{4} \left( \frac{C_4}{k_2} + \frac{C_4}{k_3} + \frac{2}{k_4} + \frac{2}{k_5} \right) \right) \int_{J} \int \chi \psi^2 u_n^{p-2} d\Gamma(u_n,u_n) dt \\
& \quad + \left( \frac{1}{C_1} - \frac{p-1}{C_1 k_1} -\frac{C_4}{k_2} - \frac{2}{k_5} \right) \int_{J} \int u_n^{p-2} \chi \psi^2 d\Gamma(u,u) dt \\
& \leq 
 \left( \frac{4k_1 C_1}{p-1} + k_2 + \frac{2}{k_5} \right) \int_J \int u^2 u_n^{p-2} d\Gamma(\psi,\psi) dt \\
& \quad +  \left( k_3 \frac{(2-p)^2}{p^2} + \frac{2}{k_4} \right) \int_J \int u_n^{p} d\Gamma(\psi,\psi) dt \\
& \quad + \left( \frac{C_5}{k_2} + C_2 k_5 + 2C_3^{1/2}  \right) \int_J \int u^2 u_n^{p-2} \psi^2 d\mu \, dt \\
& \quad + \left( \frac{C_5}{k_3} + C_5 k_4 \frac{(2-p)^2}{p^2} \right) \int_J \int u_n^p \psi^2 d\mu \, dt \\
& \quad 
- \int_J \frac{1}{h} \int_t^{t+h}\e_s( u(s,\cdot), [\mathcal H'_n(u_h(t,\cdot)) - \mathcal H'_n(u(t,\cdot))] \psi^2 ) \chi(t) ds \, dt \\
& \quad  
- \int_J \frac{1}{h} \int_t^{t+h} \e_s( u(s,\cdot) - u(t,\cdot), \mathcal H'_n(u(t,\cdot)) \psi^2 ) \chi(t) ds \, dt \\
& \quad 
+ \int_J \int_X  \mathcal H_n(u_h) \psi^2 \chi' \, d\mu \, dt. 
\end{split}
\end{align}
This completes Step 1 of the proof.

\paragraph*{STEP 2.}
This part of the proof is identical to Step 2 in Subsection \ref{ssec:proof sym strongly local}. The application of the Cauchy-Schwarz inequality is justified thanks to Assumption 0.

\paragraph*{STEP 3.}
At almost every $t \in (s_0,t_0]$, $u_h(t)$ converges to $u(t)$ in $\F$ by Lemma \ref{lem:steklov convergence}; hence, passing to a subsequence, $u_h(t,x) \to u(t,x)$ pointwise at quasi-every $x \in X$. Therefore $\mathcal{H}_n(u_h(t,\cdot)) \to \mathcal{H}_n(u(t,\cdot))$ quasi-everywhere, and by dominated convergence also in $L^1(X,\mu)$.

Letting $h \to 0$ in \eqref{eq:result of step 1 general case}, and applying the results of Step 2, we obtain that
\begin{align*} 
& \quad  \int \mathcal{H}_n(u(t_0,\cdot)) \psi^2 d\mu \\
& \quad + \left( \frac{p-2}{C_1} - \frac{p^2}{4} \left( \frac{C_4}{k_2} + \frac{C_4}{k_3} + \frac{2}{k_4} + \frac{2}{k_5} \right) \right) \int_{J} \int \chi \psi^2 u_n^{p-2} d\Gamma(u_n,u_n) dt \\
& \quad + \left( \frac{1}{C_1} - \frac{p-1}{C_1 k_1} -\frac{C_4}{k_2} - \frac{2}{k_5} \right) \int_{J} \int u_n^{p-2} \chi \psi^2 d\Gamma(u,u) dt \\
& \leq 
 \left( \frac{4k_1 C_1}{p-1} + k_2 + \frac{2}{k_5} \right) \int_J \int u^2 u_n^{p-2} d\Gamma(\psi,\psi) dt \\
& \quad +  \left( k_3 \frac{(2-p)^2}{p^2} + \frac{2}{k_4} \right) \int_J \int u_n^{p} d\Gamma(\psi,\psi) dt \\
& \quad + \left( \frac{C_5}{k_2} + C_2 k_5 + 2C_3^{1/2}  \right) \int_J \int u^2 u_n^{p-2} \psi^2 d\mu \, dt \\
& \quad + \left( \frac{C_5}{k_3} + C_5 k_4 \frac{(2-p)^2}{p^2} \right) \int_J \int u_n^p \psi^2 d\mu \, dt \\
& \quad 
+ \int_J \int_X  \mathcal H_n(u) \psi^2 \chi' \, d\mu \, dt.
\end{align*}

We take the supremum over all $t_0 \in I^-_{\sigma'}$ on both sides of the above inequality, multiply each side by $p$, and take the limit as $n \to \infty$. This completes the proof.
\end{proof}

\subsection{Proof of Proposition \ref{prop:|u| is subsolution}}

\begin{proof}
Let $\epsilon >0$ and let $\Phi(x) = \Phi_{\epsilon}(x) := \sqrt{x^2 + \epsilon} - \sqrt{\epsilon}$. 
Note that $\Phi \in \mathcal C^3(\R)$ and $\Phi(0) = \Phi'(0) = 0$. Let $u$ be a local very weak solution in $Q$. Due to the Markov property of the reference form, $|u| \in L^2_{\mbox{\tiny{loc}}}(I \to \F; U)$. By an approximation argument that applies Assumption 0(i), it suffices to show inequality \eqref{eq:weak sol} 
for non-negative functions $\phi \in \mathcal D$. 
For $h>0$ let $u_h(t) = \int_t^{t+h} u(s) ds$ be the Steklov average of $u$. 

For almost every $a,b \in I$, and for $h \in (0,b-a)$ small enough so that $b+h \in I$, we have
\begin{align*}
& 
\int \Phi (u_h(b,\cdot))  \phi \, d\mu - \int \Phi(u_h(a,\cdot))  \phi \, d\mu \\
& = \int_a^b \frac{d}{dt} \left( \int \Phi(u_h) \phi \, d\mu \right) dt \\
& =
\int_a^b \int_X \frac{\partial}{\partial t} \left( u_h \right)  \Phi'(u_h) \phi \, d\mu \, dt  \\
& = 
\int_a^b \frac{1}{h} \int_X \big( u(t+h,\cdot) - u(t,\cdot) \big)  \Phi'(u_h(t,\cdot)) \phi  \, d\mu \, dt.
\end{align*}
Next, we make use of the assumption that $u$ is a local very weak solution of the heat equation in $Q$. We obtain
\begin{align} \label{eq:steklov lhs}
& 
\int \Phi (u_h(b,\cdot))  \phi \, d\mu - \int \Phi(u_h(a,\cdot))  \phi \, d\mu  \\
& = 
- \int_a^b \frac{1}{h} \int_t^{t+h}\e_s( u(s,\cdot), \Phi'(u_h(t,\cdot)) \phi ) ds \, dt  \nonumber \\
& = \label{eq:want to let h to 0 parta}
- \int_a^b \frac{1}{h} \int_t^{t+h}\e_s( u(s,\cdot), [\Phi'(u_h(t,\cdot)) - \Phi'(u(t,\cdot))] \phi )  ds \, dt \\
& \quad \label{eq:want to let h to 0 partb}
- \int_a^b \frac{1}{h} \int_t^{t+h} \e_s( u(s,\cdot) - u(t,\cdot), \Phi'(u(t,\cdot)) \phi )  ds \, dt \\
& \quad \label{eq:steklov subsol estimate c}
- \int_a^b \frac{1}{h} \int_t^{t+h} \e_s( u(t,\cdot), \Phi'(u(t,\cdot)) \phi )  ds \, dt.
\end{align}
We let $h \to 0$. By the same argument as in Step 3 of the proof of Theorem \ref{thm:estimate subsol p>2}, \eqref{eq:steklov lhs} converges to
\begin{align*} 
\int \Phi (u(b,\cdot))  \phi \, d\mu - \int \Phi(u(a,\cdot))  \phi \, d\mu.
\end{align*}
In taking the limit in \eqref{eq:want to let h to 0 partb} and \eqref{eq:want to let h to 0 parta}, we follow the reasoning in Step 2 in the proof of Theorem \ref{thm:estimate subsol p>2}. We easily find that \eqref{eq:want to let h to 0 partb} tends to $0$. To show that \eqref{eq:want to let h to 0 parta} tends to $0$ as $h \to 0$, it suffices to prove that 
\begin{align*}
\int_a^b \Vert \Phi'(u_h(t,\cdot)) - \Phi'(u(t,\cdot))] \phi \Vert_{\F}^2 dt \to 0 \quad \mbox{ as } h \to 0.
\end{align*}
By Lemma \ref{lem:steklov convergence}, $u_h(t,\cdot) \to u(t,\cdot)$ in $\F$. Applying Proposition \ref{prop:dt-qe convergence} and passing to a subsequence which we again denote by $(u_h)$, we obtain that, for almost every $t \in (a,b)$, $\widetilde{u_h}(t,\cdot) \to \widetilde{u}(t,\cdot)$ quasi-everywhere.
Since $\phi$ and $\Phi'$ are bounded, this shows that
\begin{align*}
\int_a^b \Vert \Phi'(u_h(t,\cdot)) - \Phi'(u(t,\cdot))] \phi \Vert_{L^2(X,\mu)}^2 dt \to 0 \quad \mbox{ as } h \to 0,
\end{align*}
by dominated convergence.
Let $A := \Phi'(u_h(t,\cdot)) - \Phi'(u(t,\cdot))$ and notice that by the continuity of $\Phi'$, for almost every $t$, $A \to 0$ quasi-everywhere. Since $\Phi'$ is bounded, we obtain that $\int_a^b A^2 d\Gamma(\phi,\phi) dt \to 0$ as $h \to 0$ by dominated convergence. By \eqref{eq:chain rule for Gamma}, the boundedness of $\phi$, the fact $\Phi''$ is bounded and continuous, and by the dominated convergence theorem, we also obtain that $\int_a^b \phi^2 d\Gamma(A,A) dt \to 0$ as $h \to 0$.
Thus, by \eqref{eq:chain rule for Gamma}, \eqref{eq:Gamma(fg)} and \eqref{eq:CS}, we get
\begin{align*}
\int_a^b d\Gamma(A\phi,A \phi) dt
 \leq 
\int_a^b \phi^2 d\Gamma(A,A) dt + \int_a^b A^2 d\Gamma(\phi,\phi) dt
 \longrightarrow 0.
\end{align*}
This proves that \eqref{eq:want to let h to 0 parta} tends to $0$ as $h \to 0$.

Finally, \eqref{eq:steklov subsol estimate c} converges to
\begin{align*}
-\int_a^b \e_t( u(t,\cdot), \Phi'(u(t,\cdot)) \phi ) dt.
\end{align*}

Observe that 
\[ \int_a^b  \e_t(|u(t,\cdot)|,\phi) dt 
=
\lim_{\epsilon \to 0} 
 \int_a^b \e_t(\Phi(u(t,\cdot)),\phi) ,
\]
by \eqref{eq:chain rule for Gamma}, \eqref{eq:CS}, the fact that $\Phi'$ is bounded and continuous, Proposition \ref{prop:dt-qe convergence}, and the dominated convergence theorem.
Hence,
\begin{align} \label{eq:rhs non-positive}
& \int |u(b,\cdot)|  \phi \, d\mu - \int |u(a,\cdot)|  \phi \, d\mu  + \int_a^b  \e_t(|u(t,\cdot)|,\phi) dt \nonumber \\
& \leq
\lim_{\epsilon \to 0} 
\int \Phi (u(b,\cdot))  \phi \, d\mu - \int \Phi(u(a,\cdot))  \phi \, d\mu + \int_a^b \e_t(\Phi(u(t,\cdot)),\phi) \nonumber \\
& \leq 
\lim_{\epsilon \to 0}
\int_a^b \e_t(\Phi(u(t,\cdot)),\phi) - \e_t( u(t,\cdot), \Phi'(u(t,\cdot)) \phi ) dt.
\end{align} 
We will show that the right hand side in non-positive.
We decompose $\e_t$ into its strongly local symmetric part, its zero  order symmetric part, and its skew symmetric part and consider each part separately.
Writing $u$ for $u(t,\cdot)$, we have by \eqref{eq:chain rule for Gamma}
\begin{align*}
\e_t^{\mbox{\tiny{s}}}(\Phi(u),\phi) - \e_t^{\mbox{\tiny{s}}}(u, \Phi'(u) \phi)
 = 
- \int \Phi''(u) \phi \, d\Gamma_t(u,u) 
 \leq 0,
\end{align*}
and by Assumption 0(i),
\begin{align*}
& \quad \e_t^{\mbox{\tiny{sym}}}(\Phi(u)\phi,1) - \e_t^{\mbox{\tiny{sym}}}(u \Phi'(u) \phi,1) 
 = 
\e_t^{\mbox{\tiny{sym}}}([\Phi(u) - u \Phi'(u)] \phi,1) \\
& = 
\e_t^{\mbox{\tiny{sym}}} \left( \frac{\epsilon \phi}{\sqrt{u^2 + \epsilon}} - \sqrt{\epsilon}\phi,1 \right) \\
& \leq C_* \left( \left\Vert \frac{\epsilon \phi}{\sqrt{u^2 + \epsilon}} \right\Vert_{\F} + ||\sqrt{\epsilon}\phi||_{\F} \right) ||1^{\sharp}||_{\F},
\end{align*}
where $1^{\sharp}$ is some function with bounded $\F$-norm so that $1 = 1^{\sharp}$ on the support of $\phi$.
Letting $\epsilon \to 0$, it is clear that $||\sqrt{\epsilon}\phi||_{\F}$ tends to $0$. To see that also $\left\Vert \frac{\epsilon \phi}{\sqrt{u^2 + \epsilon}} \right\Vert_{\F}$ tends to $0$, observe that
$\frac{\epsilon \phi}{\sqrt{u^2 + \epsilon}} \to 0$ pointwise and in $L^2(X,d\mu)$. Moreover, by \eqref{eq:Gamma(fg)} and \eqref{eq:chain rule for Gamma},
\begin{align*}
& \quad 
\int d\Gamma\left(\frac{\epsilon \phi}{\sqrt{u^2 + \epsilon}},\frac{\epsilon \phi}{\sqrt{u^2 + \epsilon}} \right) \\
& \leq 
2 \int \frac{\epsilon^2}{u^2 + \epsilon} d\Gamma(\phi,\phi) 
+ 2 \epsilon^2 \int \phi^2 d\Gamma\left( \frac{1}{\sqrt{u^2 + \epsilon}}, \frac{1}{\sqrt{u^2 + \epsilon}} \right) \\
& \leq 
2 \int \frac{\epsilon^2}{u^2 + \epsilon} d\Gamma(\phi,\phi) 
+ 2  \int \phi^2 \frac{\epsilon^2 u^2}{(u^2 + \epsilon)^3} d\Gamma(u,u).
\end{align*}
As $\epsilon \to 0$, the right hand side tends to $0$ by the dominated convergence theorem and because $\phi$ is bounded and $\frac{\epsilon^2 u^2}{(u^2 + \epsilon)^3}$
is bounded and tends to $0$ pointwise on $\{u = 0\}$ and on $\{u \neq 0 \}$.
This proves that $\e_t^{\mbox{\tiny{sym}}}(\Phi(u)\phi,1) - \e_t^{\mbox{\tiny{sym}}}(u \Phi'(u) \phi,1) \to 0$ as $\epsilon \to 0$, at almost every $t \in (a,b)$.

Lastly, we consider the skew-symmetric part of $\e_t$. Let $u_m:= (u \wedge m) \vee (-m)$. Then $(u_m - u) \to 0$ in $\F$, $(\Phi(u_m) - \Phi(u)) \to 0$ in $\F$, and $(\Phi'(u_m) \phi - \Phi'(u) \phi) \to 0$ in $\F$ by \eqref{eq:chain rule for Gamma}, \eqref{eq:CS} and Proposition \ref{prop:u_mg converges in F}. 
Hence, and by Assumption 0(i), we can approximate $u$ by $u_m$. More precisely,
\begin{align*}
& \quad \e_t^{\mbox{\tiny{skew}}}(\Phi(u),\phi) - \e_t^{\mbox{\tiny{skew}}}(u,\Phi'(u) \phi) \\
& = 
\lim_{m \to \infty} \left[ \e_t^{\mbox{\tiny{skew}}}(\Phi(u_m),\phi) - \e_t^{\mbox{\tiny{skew}}}(u_m,\Phi'(u_m) \phi) \right].
\end{align*}
By  the density of $\mathcal D$ in $(\F,\| \cdot \}_{\F})$ and \cite[Theorem 2.1.4]{FOT94}, there exists a sequence $(f_k) = (f_{k,m}) \subset \mathcal D$ that converges to $u_m$ in $\F$ and so that the quasi-continuous versions $\widetilde{f_k} \leq m$ converge to $\widetilde{u_m}$ quasi-everywhere. 
Hence,
\begin{align*}
& \quad \e_t^{\mbox{\tiny{skew}}}(\Phi(u),\phi) - \e_t^{\mbox{\tiny{skew}}}(u,\Phi'(u) \phi) \\
& = 
\lim_{k,m \to \infty} \left[ \e_t^{\mbox{\tiny{skew}}}(\Phi(f_{k,m}),\phi) - \e_t^{\mbox{\tiny{skew}}}(f_{k,m},\Phi'(f_{k,m}) \phi) \right].
\end{align*}
Keeping $m$ and $k$ fixed for a moment, we write $f$ for $f_{k,m}$. 
By the chain rule for $\l$ and the Leibniz rule and the chain rule for $\r$ which hold by Assumption 0(iv),
\begin{align*}
& \quad 
\e_t^{\mbox{\tiny{skew}}}(\Phi(f),\phi) - \e_t^{\mbox{\tiny{skew}}}(f,\Phi'(f) \phi) \\
& =
\l(\Phi(f),\phi) - \l(f, \Phi'(f) \phi) + \r(\Phi(f),\phi) - \r(f, \Phi'(f) \phi) \\
& = 
\r(\Phi(f) - f \Phi'(f),\phi) - \r(f \phi \Phi''(f),f) \\
& =
\frac{1}{2} \Big[ 
\e_t^{\mbox{\tiny{skew}}}(\Phi(f) - f \Phi'(f),\phi) 
- \e_t^{\mbox{\tiny{skew}}}(\Phi(f) - f \Phi'(f)\phi,1) \\
& \quad +  \e_t^{\mbox{\tiny{skew}}}(f,f \phi \Phi''(f)) 
+ \e_t^{\mbox{\tiny{skew}}}(f^2 \Phi''(f) \phi,1) \Big].
\end{align*}
Letting now $k,m \to \infty$, we find that
\begin{align*}
& f_{k,m} \to u, \\
& \Phi(f_{k,m}) \to \Phi(u), \\
& \Phi'(f_{k,m}) \to \Phi'(u), \\
& f_{k,m}\Phi'(f_{k,m}) \to u \Phi'(u), \\
& f_{k,m}\Phi'(f_{k,m})\phi \to u \Phi'(u)\phi, \\
& f_{k,m}^2 \Phi''(f_{k,m}) \phi \to u^2 \Phi''(u) \phi,
\end{align*}
both pointwise and in $\F$.
Hence, by Assumption 0(i),
\begin{align*}
& \quad 
\e_t^{\mbox{\tiny{skew}}}(\Phi(u),\phi) - \e_t^{\mbox{\tiny{skew}}}(u,\Phi'(u) \phi) \\
& =
\frac{1}{2} \Big[ 
\e_t^{\mbox{\tiny{skew}}}(\Phi(u) - u \Phi'(u),\phi) 
- \e_t^{\mbox{\tiny{skew}}}(\Phi(u) - u \Phi'(u)\phi,1) \\
& \quad +  \e_t^{\mbox{\tiny{skew}}}(u,u \phi \Phi''(u)) 
+ \lim_{m \to \infty} \e_t^{\mbox{\tiny{skew}}}(u_m^2 \Phi''(u_m) \phi,1) \Big].
\end{align*}
We have shown above that $[\Phi(u) - u \Phi'(u)]$ and $[\Phi(u) - u \Phi'(u)]\phi$ tend to $0$ in $\F$ as $\epsilon \to 0$. It is not hard to see that also $u \phi \Phi''(u)$ and $u_m^2 \Phi''(u_m) \phi$ tend to $0$ in $\F$. 
Hence, by Assumption 0(i), 
\begin{align*}
\e_t^{\mbox{\tiny{skew}}}(\Phi(u),\phi) - \e_t^{\mbox{\tiny{skew}}}(u,\Phi'(u) \phi)
 \longrightarrow 0 \quad \mbox{ as } \epsilon \to 0,
\end{align*}
at almost every $t \in (a,b)$.

Now it follows from the dominated convergence theorem, Assumption 0(i), and the boundedness of $\phi$, $\Phi'$ and $\Phi''$ that the right hand side of \eqref{eq:rhs non-positive} is non-positive, so $|u|$ is a local very weak subsolution of the heat equation.
\end{proof}

\subsection{Proof of Lemma \ref{lem:estimate subsol p<2}}

\begin{proof}
Let $u_{\varepsilon}:= u + \varepsilon$ for $\varepsilon > 0$. 
Let $u_{\varepsilon,h} := (u_{\varepsilon})_h$ be the Steklov average of $u_{\varepsilon}$.

We use the fact that the Steklov average has a strong time-derivative, and the assumption that $u$ is a local very weak subsolution of the heat equation in $Q$. Let $s_0 = a-\frac{1+\sigma}{2}\tau r^2$.
For a.e. $t_0 \in I^-_{\sigma'}$ and $J = (s_0, t_0)$, we have
\begin{align}
& \quad  \frac{1}{p} \int u_{\varepsilon,h}(t_0,\cdot)^p  \psi^2 d\mu  \label{eq:u_e steklov LHS} \\
& = \frac{1}{p} \int u_{\varepsilon,h}(t_0,\cdot)^p \psi^2 \chi (t_0) d\mu  -  \frac{1}{p} \int u_{\varepsilon,h}(s_0,\cdot)^p  \psi^2 \chi(s_0) d\mu  \nonumber \\
& = 
 \int_J \frac{1}{h} \big( u(t+h, \cdot) - u(t,\cdot) \big) u_{\varepsilon,h}(t,\cdot)^{p-1} \psi^2 \chi(t)  d\mu \, dt 
       + \int_J \int \frac{\chi'(t)}{p} u_{\varepsilon,h}(t,\cdot)^p \psi^2 d\mu \, dt \nonumber \\
& \leq 
 - \int_J \frac{1}{h} \int_t^{t+h} \e_s(u(s,\cdot),u_{\varepsilon,h}(t,\cdot)^{p-1} \psi^2 \chi(t) ) ds \, dt 
 + \int_J \int \frac{\chi'}{p} u_{\varepsilon,h}^p  \psi^2 d\mu \, dt \nonumber \\
& \leq 
 - \int_J \frac{1}{h} \int_t^{t+h} \e_s(u(s,\cdot),[u_{\varepsilon,h}(t,\cdot)^{p-1} - u_{\varepsilon}(t,\cdot)^{p-1}] \psi^2) ds \, \chi(t) dt  \label{eq:u_e steklov i} \\
& \quad  - \int_J \frac{1}{h} \int_t^{t+h} \e_s(u(s,\cdot) - u(t,\cdot),u_{\varepsilon}(t,\cdot)^{p-1} \psi^2 ) ds \, \chi(t) dt  \label{eq:u_e steklov ii} \\
& \quad  - \int_J \frac{1}{h} \int_t^{t+h} \e_s(u(t,\cdot),u_{\varepsilon}(t,\cdot)^{p-1} \psi^2 ) ds \, \chi(t)dt \label{eq:u_e steklov iii} \\
& \quad  + \int_J \int \frac{\chi'}{p} u_{\varepsilon,h}^p  \psi^2 d\mu \, dt. \label{eq:u_e steklov iv} 
\end{align}

\paragraph*{STEP 1.}
We decompose the integrand in \eqref{eq:u_e steklov iii} as 
\begin{align*}
- \e_s(u,u_{\varepsilon}^{p-1} \psi^2 )
& \leq 
 - \e^{\mbox{\tiny{s}}}_s(u_{\varepsilon},u_{\varepsilon}^{p-1}  \psi^2 ) 
 - \e^{\mbox{\tiny{sym}}}_s(u_{\varepsilon}^p \psi^2 ,1) 
 - \e^{\mbox{\tiny{skew}}}_s(u_{\varepsilon},u_{\varepsilon}^{p-1}\psi^2 ) \nonumber \\
& \quad 
+ \e_s(\varepsilon,u_{\varepsilon}^{p-1} \psi^2 ).
\end{align*}
and estimate each part separately.
By Lemma \ref{lem:SUP sym2}, Assumption 1(i) and Remark \ref{rem:assumptions}(i), we have for any $k_1 \geq 1$,
\begin{align*}
&- \e^{\mbox{\tiny{s}}}_s (u_{\varepsilon}, u_{\varepsilon}^{p-1}  \psi^2) \\
 \leq & \, \frac{4 k_1}{p-1} \int u_{\varepsilon}^p  d\Gamma_s(\psi,\psi) 
     - (p-1) \left(1 - \frac{1}{k_1} \right) \int  u_{\varepsilon}^{p-2}  \psi^2  d\Gamma_s(u_{\varepsilon},u_{\varepsilon}) \\
 \leq & \, \frac{4 k_1 C_1}{p-1} \int u_{\varepsilon}^p  d\Gamma(\psi,\psi)
       - (p-1) C_1^{-1} \left(1 - \frac{1}{k_1} \right) \int u_{\varepsilon}^{p-2}  \psi^2   d\Gamma(u_{\varepsilon},u_{\varepsilon}).
\end{align*}
By Assumption 1(ii), Remark \ref{rem:assumptions}(i), and \eqref{eq:Gamma(fg)}, we have for any $k_2 > 0$,
\begin{align*}
|\e^{\mbox{\tiny{sym}}}_s(u_{\varepsilon}^p  \psi^2,1)| 
\leq & \frac{2}{k_2} \int u_{\varepsilon}^p  d\Gamma(\psi,\psi) \nonumber
 + \frac{p^2}{2k_2} \int \psi^2 u_{\varepsilon}^{p-2}  d\Gamma(u_{\varepsilon},u_{\varepsilon}) \\
& +  ( C_2 k_2 + 2C_3^{1/2}) \int u_{\varepsilon}^p  \psi^2 d\mu.
\end{align*}
By Corollary \ref{cor:skew identity with p for u_eps}, Assumption 1(iii), Remark \ref{rem:assumptions}(i), and \eqref{eq:Gamma(fg)}, we have for any $k_3$, $k_4 >0$,
\begin{align*}
& - \e^{\mbox{\tiny{skew}}}_s(u_{\varepsilon},u_{\varepsilon}^{p-1}  \psi^2) \\
= & - \frac{2}{p} \e^{\mbox{\tiny{skew}}}_s(u_{\varepsilon}^{p/2},u_{\varepsilon}^{p/2}  \psi^2) 
   - \frac{2-p}{p} \e^{\mbox{\tiny{skew}}}_s(u_{\varepsilon}^p  \psi^2,1) \\
\leq &  \left( k_3 \frac{4}{p^2} + \frac{2}{k_4} \right)  \int u_{\varepsilon}^{p}  d\Gamma(\psi,\psi)
    + \left( \frac{C_4}{k_3} + \frac{2}{k_4} \right) \int \psi^2  d\Gamma(u_{\varepsilon}^{p/2}, u_{\varepsilon}^{p/2}) \\
     & +   \left( \frac{C_5}{k_3} + C_5 k_4 \frac{(2-p)^2}{p^2} \right) \int u_{\varepsilon}^p \psi^2  d\mu.
\end{align*}
By Lemma \ref{lem:e(1,)}, we have for any $k_5\geq 1$,
\begin{align*}
 |\e_s(\varepsilon,u_{\varepsilon}^{p-1}  \psi^2)| 
\leq & \frac{4}{k_5} \int u_{\varepsilon}^p  d\Gamma(\psi,\psi) 
+ \frac{(p-1)^2}{k_5} \int \psi^2 u_{\varepsilon}^{p-2}  d\Gamma(u_{\varepsilon},u_{\varepsilon}) \\
& + 2(C_2+C_3^{1/2}+C_5) k_5 \int u_{\varepsilon}^p \psi^2  d\mu.
\end{align*}

Combining these estimates with inequality \eqref{eq:u_e steklov LHS} - \eqref{eq:u_e steklov iv}, we obtain
\begin{align*} 
&  \frac{1}{p} \int u_{\varepsilon,h}^p(t_0,\cdot) \psi^2 d\mu \\
& \quad  + \left( \frac{p-1}{C_1} \left(1 - \frac{1}{k_1} \right) - \frac{p^2}{2k_2} - \frac{p^2 C_4}{4 k_3} - \frac{p^2}{2 k_4} - \frac{(p-1)^2}{k_5} \right) \\
& \qquad \cdot
\int_J \frac{1}{h} \int_t^{t+h} \int \psi^2  u_{\varepsilon}^{p-2}  d\Gamma(u_{\varepsilon},u_{\varepsilon}) ds \, \chi(t) dt \\
& \leq 
 \left( \frac{4 k_1 C_1}{p-1} + \frac{2}{k_2} + k_3 \frac{4}{p^2} + \frac{2}{k_4} + \frac{4}{k_5} \right) 
\int_J \frac{1}{h} \int_t^{t+h} \int u_{\varepsilon}^{p}  d\Gamma(\psi,\psi) ds \, \chi(t) dt \\
& \quad  + C' \int_J \frac{1}{h} \int_t^{t+h} \int u_{\varepsilon}^{p}  d\Gamma(\psi,\psi) ds \, \chi(t) dt \\
& \quad - \int_J \frac{1}{h} \int_t^{t+h} \e_s(u(s,\cdot),[u_{\varepsilon,h}(t,\cdot)^{p-1} - u_{\varepsilon}(t,\cdot)^{p-1}] \psi^2) ds \, \chi(t) dt \\
& \quad  - \int_J \frac{1}{h} \int_t^{t+h} \e_s(u(s,\cdot) - u(t,\cdot),u_{\varepsilon}(t,\cdot)^{p-1} \psi^2 ) ds \, \chi(t) dt  \\
& \quad  + \int_J \int \frac{2}{p \omega \tau r^2} u_{\varepsilon,h}^p  \psi^2 d\mu \, dt.
\end{align*}
where $C' = C_2 k_2 + 2C_3^{1/2} + \frac{C_5}{k_3} + C_5 k_4 \frac{(2-p)^2}{p^2} + 2(C_2+C_3^{1/2}+C_5) k_5$.

\paragraph*{STEP 2.} 
Now we consider the limit as $h \to 0$.
As in Step 2 of the proof of Theorem \ref{thm:estimate subsol p>2}, we see that \eqref{eq:u_e steklov i} and \eqref{eq:u_e steklov ii} go to $0$. Appropriate choices of $k_1,k_2, k_3, k_4, k_5$ allow us to let $h \to 0$ in the remaining terms, similarly to Step 3 in the proof Theorem \ref{thm:estimate subsol p>2}.
Finally, we take the supremum over all $t_0 \in I^-_{\sigma'}$ on both sides of the inequality, and then let $\varepsilon \to 0$. This completes the proof.
\end{proof}

\subsection{Proof of Lemma \ref{lem:estimate supsol}}

\begin{proof}
Let $u_{\varepsilon}:= u + \varepsilon$ for $\varepsilon > 0$. 
Let $u_{\varepsilon,h} := (u_{\varepsilon})_h$ be the Steklov average of $u_{\varepsilon}$.
We first consider the case $p<0$. 
We use the fact that the Steklov average has a strong time-derivative, and the assumption that $u$ is a local very weakly supersolution of the heat equation in $Q$. Let $s_0 = a-\frac{1+\sigma}{2}\tau r^2$.
For a.e. $t_0 \in I^-_{\sigma'}$ and $J = (s_0, t_0)$, we have
\begin{align}
& \quad  \int u_{\varepsilon,h}(t_0,\cdot)^p  \psi^2 d\mu  \nonumber \\
& = \int u_{\varepsilon,h}(t_0,\cdot)^p \psi^2 \chi (t_0) d\mu  - \int u_{\varepsilon,h}(s_0,\cdot)^p  \psi^2 \chi(s_0) d\mu  \nonumber \\
& = 
 p \int_J \frac{1}{h} \big( u(t+h, \cdot) - u(t,\cdot) \big) u_{\varepsilon,h}(t,\cdot)^{p-1} \psi^2 \chi(t)  d\mu \, dt 
       + \int_J \int \chi'(t) u_{\varepsilon,h}(t,\cdot)^p \psi^2 d\mu \, dt \nonumber \\
& \leq 
 - p \int_J \frac{1}{h} \int_t^{t+h} \e_s(u(s,\cdot),u_{\varepsilon,h}(t,\cdot)^{p-1} \psi^2 \chi(t) ) ds \, dt 
 + \int_J \int \chi' u_{\varepsilon,h}^p  \psi^2 d\mu \, dt \nonumber \\
& \leq 
 - p \int_J \frac{1}{h} \int_t^{t+h} \e_s(u(s,\cdot),[u_{\varepsilon,h}(t,\cdot)^{p-1} - u_{\varepsilon}(t,\cdot)^{p-1}] \psi^2) ds \, \chi(t) dt  \nonumber \\
& \quad  - p \int_J \frac{1}{h} \int_t^{t+h} \e_s(u(s,\cdot) - u(t,\cdot),u_{\varepsilon}(t,\cdot)^{p-1} \psi^2 ) ds \, \chi(t) dt  \nonumber \\
& \quad  - p \int_J \frac{1}{h} \int_t^{t+h} \e_s(u(t,\cdot),u_{\varepsilon}(t,\cdot)^{p-1} \psi^2 ) ds \, \chi(t)dt \label{eq:u_e steklov p<0} \\
& \quad  + \int_J \int \chi' u_{\varepsilon,h}^p  \psi^2 d\mu \, dt. \nonumber 
\end{align}

In the case $p \in (0,1-\eta)$, we choose a different function $\chi:\R \to \R$. Namely, we let $\chi$ be such that $0 \leq \chi \leq 1$, $\chi = 0$ in 
$(a + \sigma\tau r^2, \infty)$, $\chi = 1$ in $(-\infty, a + \sigma' \tau r^2)$, and $0 \geq \chi' \geq - 2/(\omega \tau r^2)$. 
Further, let $s_1 = a + \frac{1+\sigma}{2}\tau r^2$.
We obtain that for almost every $t_1 \in I^+_{\sigma'}$ and $J=(t_1,s_1)$,
\begin{align}
& \quad  - \int u_{\varepsilon,h}(t_1,\cdot)^p  \psi^2 d\mu  \nonumber \\
& = - \int u_{\varepsilon,h}(t_1,\cdot)^p \psi^2 \chi (t_1) d\mu  + \int u_{\varepsilon,h}(s_1,\cdot)^p  \psi^2 \chi(s_0) d\mu  \nonumber \\
& = 
- p \int_J \frac{1}{h} \big( u(t+h, \cdot) - u(t,\cdot) \big) u_{\varepsilon,h}(t,\cdot)^{p-1} \psi^2 \chi(t)  d\mu \, dt 
       - \int_J \int \chi'(t) u_{\varepsilon,h}(t,\cdot)^p \psi^2 d\mu \, dt \nonumber \\
& \leq 
  p \int_J \frac{1}{h} \int_t^{t+h} \e_s(u(s,\cdot),u_{\varepsilon,h}(t,\cdot)^{p-1} \psi^2 \chi(t) ) ds \, dt 
 - \int_J \int \chi' u_{\varepsilon,h}^p  \psi^2 d\mu \, dt \nonumber \\
& \leq 
  p \int_J \frac{1}{h} \int_t^{t+h} \e_s(u(s,\cdot),[u_{\varepsilon,h}(t,\cdot)^{p-1} - u_{\varepsilon}(t,\cdot)^{p-1}] \psi^2) ds \, \chi(t) dt  \nonumber \\
& \quad  + p \int_J \frac{1}{h} \int_t^{t+h} \e_s(u(s,\cdot) - u(t,\cdot),u_{\varepsilon}(t,\cdot)^{p-1} \psi^2 ) ds \, \chi(t) dt  \nonumber \\
& \quad  + p \int_J \frac{1}{h} \int_t^{t+h} \e_s(u(t,\cdot),u_{\varepsilon}(t,\cdot)^{p-1} \psi^2 ) ds \, \chi(t)dt \label{eq:u_e steklov p>0} \\
& \quad  - \int_J \int \chi' u_{\varepsilon,h}^p  \psi^2 d\mu \, dt. \nonumber 
\end{align}

\paragraph*{STEP 1.}
The following estimates hold in both cases, $p<0$ and $p \in (0,1-\eta)$.
By Corollary \ref{cor:skew identity with p for u_eps}, we can decompose the integrand in \eqref{eq:u_e steklov p<0} and \eqref{eq:u_e steklov p>0} as 
\begin{align*}
|p| \e_s(u,u_{\varepsilon}^{p-1} \psi^2 )
& = 
 |p| \e^{\mbox{\tiny{s}}}_s(u_{\varepsilon},u_{\varepsilon}^{p-1}  \psi^2 ) 
 + |p| \e^{\mbox{\tiny{sym}}}_s(u_{\varepsilon}^p \psi^2 ,1) 
 + |p| \e^{\mbox{\tiny{skew}}}_s(u_{\varepsilon},u_{\varepsilon}^{p-1}\psi^2 ) \nonumber \\
& \quad 
- |p| \e_s(\varepsilon,u_{\varepsilon}^{p-1} \psi^2 ) \\
& = \e^{\mbox{\tiny{s}}}_s(u_{\varepsilon},u_{\varepsilon}^{p-1}  \psi^2 ) 
+ |p| \left( \e^{\mbox{\tiny{sym}}}_s(u_{\varepsilon}^{p} \psi^2,1) 
   - \varepsilon \e^{\mbox{\tiny{sym}}}_s( u_{\varepsilon}^{p-1} \psi^2,1)  \right) \\
& \quad + 2 |\e^{\mbox{\tiny{skew}}}_s(u_{\varepsilon}^{p/2}, u_{\varepsilon}^{p/2}\psi^2)| 
+ |2-p| |\e^{\mbox{\tiny{skew}}}_s(u_{\varepsilon}^p \psi^2,1)| 
+ |p||\e^{\mbox{\tiny{skew}}}_s(\varepsilon, u_{\varepsilon}^{p-1} \psi^2)|.
\end{align*}
We will estimate the parts on the right hand side separately.
By Assumption 1(iii), Remark \ref{rem:assumptions}(i), and \eqref{eq:Gamma(fg)}, we obtain that for any $k_1, k_2, k_3 > 0$,
\begin{align} \label{eq:e^skew estimate p<1}
& 2 |\e^{\mbox{\tiny{skew}}}_s(u_{\varepsilon}^{p/2}, u_{\varepsilon}^{p/2}\psi^2)| 
+ |2-p| |\e^{\mbox{\tiny{skew}}}_s(u_{\varepsilon}^p \psi^2,1)| 
+ |p||\e^{\mbox{\tiny{skew}}}_s(\varepsilon, u_{\varepsilon}^{p-1} \psi^2)| \nonumber \\
\leq & \left( 4k_1 + \frac{2}{k_2} + \frac{2}{k_3} \right) \int u_{\varepsilon}^p  d\Gamma(\psi,\psi) \nonumber \\
& + \left( \frac{C_4}{k_1} + \frac{2}{k_2} + \frac{2}{k_3} \frac{(p-1)^2}{p^2} \right) \int \psi^2  d\Gamma(u_{\varepsilon}^{p/2},u_{\varepsilon}^{p/2}) \\
& +  C_5 \left( \frac{1}{k_1} + |2-p|^2 k_2 + |p|^2 k_3 \right) \int u_{\varepsilon}^p  \psi^2 d\mu. \nonumber
\end{align}
By Assumption 1(ii), Remark \ref{rem:assumptions}(i), and \eqref{eq:Gamma(fg)}, we have for any $k_4>0$,
\begin{align} \label{eq:e^sym estimate p<1}
|p| \left( \e^{\mbox{\tiny{sym}}}_s(u_{\varepsilon}^{p} \psi^2,1) 
              - \varepsilon \e^{\mbox{\tiny{sym}}}_s( u_{\varepsilon}^{p-1} \psi^2,1)  \right)
& \leq  \frac{4}{k_4} \int u_{\varepsilon}^p  d\Gamma(\psi,\psi) \nonumber \\
& \quad + \frac{2}{k_4} \left( 1 + \frac{(p-1)^2}{p^2} \right) \int \psi^2  d\Gamma(u_{\varepsilon}^{p/2},u_{\varepsilon}^{p/2}) \\
& \quad + 2 ( C_2 k_4 p^2 + 2C_3^{1/2} |p|) \int u_{\varepsilon}^p  \psi^2 d\mu. \nonumber
\end{align}
By Lemma \ref{lem:SUP sym2}, Assumption \ref{as:e_t}(i) and Remark \ref{rem:assumptions}(i), we have for any $k_5 \geq 1$,
\begin{align} \label{eq:e_t s}
|p| \e^{\mbox{\tiny{s}}}_s(u_{\varepsilon},u_{\varepsilon}^{p-1}  \psi^2) 
 \leq & 4k_5 C_1 \frac{|p|}{|p-1|} \int u_{\varepsilon}^p  d\Gamma(\psi,\psi)  \nonumber \\
       & - \left(1 - \frac{1}{k_5}\right) \frac{|p-1|}{|p|} \frac{4}{C_1} \int \psi^2  d\Gamma(u_{\varepsilon}^{p/2},u_{\varepsilon}^{p/2}).
\end{align}

In view of the above estimates, the proof of \eqref{eq:supsol p<0} and \eqref{eq:supsol 0<p<1} can now be finished similarly to the proof of Lemma \ref{lem:estimate subsol p<2}, by using the estimates
 \eqref{eq:e^skew estimate p<1}, \eqref{eq:e^sym estimate p<1}, and \eqref{eq:e_t s} in Step 1, and by letting $h \to 0$ in Step 2. 
In Step 3, we take the supremum over all $t_0 \in I^-_{\sigma'}$ in the case $p<0$, and the supremum over all $t_1 \in I^+_{\sigma'}$ in the case $p \in (0,1-\eta)$. We finish the proof by letting $\varepsilon \to 0$.
\end{proof}

\subsection{Proof of Lemma \ref{lem:5.4.1}}

\begin{proof}
We present the proof only for the case $\tau = 1$. The proof of the general case is very similar. Let $\psi(z) = \max\{0,(1 - d(x,z)) / r' \} \in \F_{\mbox{\tiny{c}}}(B) \cap C_{\mbox{\tiny{c}}}(B)$, 
where $r'>0$ is slightly smaller than $r$. Note that $d\Gamma(\psi,\psi) \leq c r^{-2} d\mu$.
Let $u_{\varepsilon,h}$ be the Steklov average of $u_{\varepsilon}$. Then, using the fact that the Steklov average has a strong time-derivative and the assumption that $u$ is local very weak supersolution, we obtain
\begin{align} \label{eq:will let h to 0 LHS}
 & - \frac{d}{dt} \int \log u_{\varepsilon,h}(t) \psi^2 d\mu \\
& = 
- \frac{1}{h} \int [ u(t+h) - u(t) ] \frac{1}{u_{\varepsilon,h}(t)} \psi^2 d\mu \nonumber \\
& \leq 
 \frac{1}{h} \int_t^{t+h} \e_s \left( u(s),\frac{1}{u_{\varepsilon,h}(t)} \psi^2 \right) ds \nonumber \\
& =  \label{eq:will let h to 0 i}
 \frac{1}{h} \int_t^{t+h} \e_s \left( u(s),\frac{1}{u_{\varepsilon,h}(t)} \psi^2  - \frac{1}{u_{\varepsilon}(t)} \psi^2 \right) ds \\
& \quad \label{eq:will let h to 0 ii}
+  \frac{1}{h} \int_t^{t+h} \e_s \left( u(s) - u(t),\frac{1}{u_{\varepsilon}(t)} \psi^2 \right) ds \\
& \quad \label{eq:will let h to 0 iii}
+  \frac{1}{h} \int_t^{t+h} \e_s \left( u_{\varepsilon}(t),\frac{1}{u_{\varepsilon}(t)} \psi^2 \right) 
                                 -  \e_s \left( \varepsilon,\frac{1}{u_{\varepsilon}(t)} \psi^2 \right)ds  \\
& = \label{eq:will let h to 0 RHS}
I_1(h) + I_2(h) + I_3(h).
\end{align}
We will see below that $I_1(h)$ and $I_2(h)$ tend to $0$ in the appropriate sense as $h \to 0$. We consider $I_3(h)$.
For almost every $s \in [t,t+h]$, we have by the chain rule for $\Gamma_s$, the Cauchy-Schwarz inequality \eqref{eq:CS} and Assumption 1(i),
\begin{align*}
\e^{\mbox{\tiny{s}}}_s(u_{\varepsilon}(t),u_{\varepsilon}^{-1}(t) \psi^2)
& =  \int 2\psi d\Gamma_s(\log u_{\varepsilon}(t),\psi) 
   - \int \psi^2 d\Gamma_s(\log u_{\varepsilon}(t),\log u_{\varepsilon}(t)) \\
&\leq  k' \int  d\Gamma(\psi,\psi) 
     -\frac{1}{k'} \int \psi^2 d\Gamma(\log u_{\varepsilon}(t),\log u_{\varepsilon}(t)),
\end{align*}
for some constant $k' > 1$.
By Lemma \ref{lem:approximation nonsym} , Assumption \ref{as:p=0}, Assumption \ref{as:e_t}(ii) and Lemma \ref{lem:e(1,)}, we have
\begin{align*}
& \quad \e_s^{\mbox{\tiny{skew}}}(u_{\varepsilon},u_{\varepsilon}^{-1}(t) \psi^2)
      + \e_s^{\mbox{\tiny{sym}}}(\psi^2,1) 
      - \e_s(\varepsilon,u_{\varepsilon}^{-1}(t) \psi^2) \nonumber \\
& \leq  
k'' \int d\Gamma(\psi,\psi) + \frac{1}{k''} \int \psi^2 d\Gamma(\log u_{\varepsilon}(t), \log u_{\varepsilon}(t)) 
 + k'' C_8 \int \psi^2 d\mu,
\end{align*}
for sufficiently large $k'' > 1$.
Thus, for sufficiently large $k > 1$, we obtain that
\begin{align} \label{eq:lem5.4.1a}
 & \quad - \frac{d}{dt} \int \log u_{\varepsilon,h}(t) \psi^2 d\mu 
+ \frac{1}{k} \int \psi^2 d\Gamma(\log u_{\varepsilon}(t), \log u_{\varepsilon}(t))  \nonumber \\
& \leq 
I_1(h) + I_2(h) + \left( k \int d\Gamma(\psi,\psi) + k C_8 \int \psi^2 d\mu \right).
\end{align}

Let 
\[ W(t) :=  -\frac{\int \log u_{\varepsilon}(t) \psi^2 d\mu }{ \int \psi^2 d\mu} \quad \mbox{ and} \quad W_h(t) :=  -\frac{\int \log u_{\varepsilon,h}(t) \psi^2 d\mu }{ \int \psi^2 d\mu}. \]
By the weighted Poincar\'e inequality of Theorem \ref{thm:weighted PI}, it holds for a.e. $t \in I$ that
\begin{align*}
\int |-\log u_{\varepsilon}(t) - W(t)|^2 \psi^2 d\mu  \leq  C_{\mbox{\tiny{wPI}}} \, r^2 \int \psi^2 d\Gamma(\log u_{\varepsilon}(t), \log u_{\varepsilon}(t)).
\end{align*}
This and \eqref{eq:lem5.4.1a} give
\begin{align*} 
& \quad  \frac{d}{dt} W_h(t) + \frac{1}{C r^2 \mu(B)}  \int_{\delta B} |-\log u_{\varepsilon}(t) - W(t)|^2 \psi^2 d\mu \\
& \leq 
\frac{I_1(h) + I_2(h)}{\int \psi^2 d\mu} + (C' r^{-2} + k C_8),
\end{align*}
for some constants $C,C'>0$.
Writing
\begin{align*}
\overline{w}(t,z) & = -\log u_{\varepsilon}(t,z) - (C'r^{-2} + k C_8) (t-a), \\
\overline{W}(t) & = W(t) -  (C'r^{-2} + k C_8) (t-a), \\
\overline{W_h}(t) & = W_h(t) -  (C'r^{-2} + k C_8) (t-a),
\end{align*}
we obtain for $t > a$ that
\begin{align} \label{eq:5.4.2} 
\frac{d}{dt} \overline{W_h}(t)  + \frac{1}{C r^2 \mu(B)} \int_{\delta B} |\overline{w} - \overline{W}|^2 \psi^2 d\mu  
\leq \frac{I_1(h) + I_2(h)}{\int \psi^2 d\mu}.
\end{align}
For $\lambda > 0$, set
\begin{align*}
\Omega^-_t(\lambda) & = \{ z \in \delta B : \overline{w}(t,z) < - \lambda + \overline{W}(a) \}, \\
\Omega^+_t(\lambda) & = \{ z \in \delta B : \overline{w}(t,z) > \lambda + \overline{W}(a) \}.
\end{align*}
Then, for any $a < t$,
\begin{align} \label{eq:W und lambda} 
\overline{w}(t,z) - \overline{W}(t) > \lambda + \overline{W}(a) - \overline{W}(t) \geq \lambda
\end{align}
in $\Omega^+_{t}(\lambda)$, because  $W(a) - W(t) \geq 0$ (to see this, integrate \eqref{eq:5.4.2}  and then let $h \to 0$).
Applying \eqref{eq:W und lambda} in the inquality \eqref{eq:5.4.2}, we get
\[ \frac{d}{dt} \overline{W_h}(t) + \frac{1}{C r^2 \mu(B)} |\lambda + \overline{W}(a) - \overline{W}(t)|^2 \mu(\Omega^+_{t}(\lambda)) 
\leq \frac{I_1(h) + I_2(h)}{\int \psi^2 d\mu}.  \]
Dividing by $|\lambda + \overline{W_h}(a) - \overline{W_h}(t)|^2$, we can rewrite this inequality as
\begin{align*}
& \quad 
\frac{d}{dt} |\lambda + \overline{W_h}(a) - \overline{W_h}(t)|^{-1} 
+ \frac{|\lambda + \overline{W}(a) - \overline{W}(t)|^2}{|\lambda + \overline{W_h}(a) - \overline{W_h}(t)|^2} \frac{\mu(\Omega^+_{t}(\lambda))}{C r^2 \mu(B)} \\
& \leq  
\frac{I_1(h) + I_2(h)}{\int \psi^2 d\mu} \frac{1}{|\lambda + \overline{W_h}(a) - \overline{W_h}(t)|^2},
\end{align*}
or, equivalently,
\begin{align} \label{eq:mu(Omega+)}
  \mu(\Omega^+_t(\lambda)) 
& \leq 
C r^2 \mu(B) \bigg( 
 - \frac{d}{dt} |\lambda + \overline{W_h}(a) - \overline{W_h}(t)|^{-1} \frac{|\lambda + \overline{W_h}(a) - \overline{W_h}(t)|^2}{|\lambda + \overline{W}(a) - \overline{W}(t)|^2} \\
 & \quad
  + \frac{I_1(h) + I_2(h)}{\int \psi^2 d\mu} \frac{1}{|\lambda + \overline{W}(a) - \overline{W}(t)|^2}\bigg).
\end{align}
Next, we want to bound $\frac{|\lambda + \overline{W_h}(a) - \overline{W_h}(t)|^2}{|\lambda + \overline{W}(a) - \overline{W}(t)|^2}$
by $(1 + \epsilon(h))$ for some small $\epsilon(h) >0$ that tends to $0$ as $h \to 0$. By the triangle inequality,
\begin{align*}
 \frac{|\lambda + \overline{W_h}(a) - \overline{W_h}(t)|^2}{|\lambda + \overline{W}(a) - \overline{W}(t)|^2}
& \leq \frac{\left( |\lambda + \overline{W}(a) - \overline{W}(t)| + |\overline{W}(a) - \overline{W_h}(a)| + |\overline{W}(t) - \overline{W_h}(t)|\right)^2}{|\lambda + \overline{W}(a) - \overline{W}(t)|^2}.
\end{align*}
We show that $ |\overline{W}(t) - \overline{W_h}(t)| \to 0$ uniformly in $t$ as $h \to 0$. Since $W(t)$ is decreasing in $t$, we have $W_h(t) \leq W(t)$. 
Hence, by Jensen's inequality and the Cauchy-Schwarz inequality,
\begin{align*}
 |\overline{W}(t) - \overline{W_h}(t)| 
& = W(t) - W_h(t) \\
& = \frac{1}{\int \psi^2 d\mu} \left( - \int \log u_{\epsilon}(t) \psi^2 d\mu + \int \log u_{\epsilon,h}(t) \psi^2 d\mu \right) \\
& \leq \frac{1}{\int \psi^2 d\mu} \left( - \int \log u_{\epsilon}(t) \psi^2 d\mu + \int \frac{1}{h} \int_t^{t+h} \log u_{\epsilon}(s) ds \, \psi^2 d\mu \right) \\
& \leq \frac{1}{\left(\int \psi^2 d\mu\right)^{1/2}}  \left( \int \left( - \log u_{\epsilon}(t) \psi  +  \frac{1}{h} \int_t^{t+h} \log u_{\epsilon}(s) \psi ds \right)^2 d\mu \right)^{1/2}.
\end{align*}
By Corollary \ref{cor:steklov convergence uniformly in t}, the right hand side converges to $0$ uniformly in $t$, as $h \to 0$.

Integrating \eqref{eq:mu(Omega+)} from $a$ to $a+\eta r^2$, we obtain
\begin{align*}
& \quad
 \overline{\mu}\left( \big\{(t,z) \in K_+ : \overline{w}(t,z) > \lambda + \overline{W}(a) \big\} \right) \\
& \leq 
C r^2 \mu(B) \left( \lambda^{-1} (1+ \epsilon(h)) + \frac{1}{ \lambda^2 \int \psi^2 d\mu} \int_a^{a+\eta r^2} I_1(h) + I_2(h) dt \right)
\end{align*} 
Letting $h \to 0$, we find that $\int_a^{a+\eta r^2} I_1(h)dt \to 0$ by Assumption 0(i), Cauchy-Schwarz inequality, and Corollary \ref{cor:steklov convergence in L^2}. 
Similarly, we find that $\int_a^{a+\eta r^2} I_2(h)dt \to 0$ by Assumption 0(i), triangle inequality, Cauchy-Schwarz inequality, \eqref{eq:chain rule for Gamma}, the local boundedness of $u$, the dominated convergence theorem, and Corollary \ref{cor:steklov convergence in L^2}.

Hence, we get
\[ \overline{\mu}\left( \big\{(t,z) \in K_+ : \overline{w}(t,z) > \lambda + \overline{W}(a) \big\} \right) 
\leq C r^2 \mu(B) \lambda^{-1}, \]
and hence
\begin{align*}
& \overline{\mu}\left( \big\{(t,z) \in K_+ : \log u_{\varepsilon}(t,z) + (C' r^{-2} + k' C_8) (t-a) < - \lambda - \overline{W}(a) \big\} \right)  \\
\leq & \ C r^2 \mu(B) \lambda^{-1}.
\end{align*}
Finally,
\begin{align*}
& \quad \ \overline{\mu}\left( \big\{(t,z) \in K_+ : \log u_{\varepsilon}(t,z) < - \lambda - \overline{W}(a) \big\} \right)  \\
& \leq \overline{\mu}\left( \big\{(t,z) \in K_+ : \log u_{\varepsilon}(t,z) + (C' + k' C_8 r^2)r^{-2} (t-a) < - \lambda/2 - \overline{W}(a) \big\} \right) \\
& \quad \ + \overline{\mu}\left( \big\{(t,z) \in K_+ : (C' + k' C_8 r^2) r^{-2} (t-a) > \lambda/2 \big\} \right) \\
& \leq C'' (1+C_8r^2) r^2 \mu(B) \lambda^{-1}.
\end{align*}
This proves the first inequality in Lemma \ref{lem:5.4.1}. By a similar argument, using $\Omega^-_t$ instead of $\Omega^+_t$,
and the average $\frac{1}{h} \int_{t-h}^t f(s) ds$ intead of the usual Steklov average $f_h$, we obtain the second inequality.
\end{proof}

\section{Appendix}

\subsection{Remarks on the novelty and relevance of Assumptions 0, \ref{as:e_t} and \ref{as:p=0}} \label{ssec:Sturm}
 
The paper \cite{SturmII} outlined a proof of mean value estimates in the context of non-symmetric Dirichlet forms. In this subsection we explain the novelty and relevance of our Assumptions 0, \ref{as:e_t} and \ref{as:p=0} in comparison to the assumptions made in \cite{SturmII}.

The parabolic Harnack inequality for non-symmetric Dirichlet forms is not treated in Sturm's work. The assumptions in \cite{SturmII} on non-symmetric Dirichlet forms appear to be insufficient to prove the parabolic Harnack inequality, even under the additional assumption that weak solutions to the heat equation are locally bounded. Indeed, we use Assumption \ref{as:p=0} to obtain the parabolic Harnack inequality from mean value estimates. An assumption of this sort was not made in \cite{SturmII}.

Sturm \cite{SturmII} considers a reference form $(\e,\F)$ and a family of local regular Dirichlet forms $(\e_t)_{t \in \R}$ with domain $D(\e_t) = \F$. He assumes that, for all $f,g \in \F$, the map $t \mapsto \e_t(f,g)$ is measurable, and Assumption 0(i) is satisfied uniformly in $t$. The assumption that each $\e_t$ is a local Dirichlet form is stronger than Assumption 0(iv). In addition, Sturm makes two assumptions (UP) with $\gamma = 0$ and (SUP), which read as follows.

\textbf{Uniform parabolicity (UP).} { \em 
There exist constants $K,k \in (0,\infty)$ and $\gamma \in [0,\infty)$ such that
\begin{align*}
- \e_t(u,v) \le K [ \e(\sqrt{uv},\sqrt{uv}) - \e(u,v) ] - k \e(\sqrt{uv},\sqrt{uv}) + \gamma \int_X u v \, d\mu,
\end{align*}
for all $t \in \R$ and all $u,v \in \F$ with $uv \ge 0$ and $\sqrt{uv} \in \F$.
}

In case each $\e_t$ is symmetric strongly local, (UP) with $\gamma =0$ is equivalent to Assumption 0(iii).
There seems to be no obvious relation between (UP) and Assumption 0(ii). Nevertheless, (UP) implies \eqref{eq:lower bounded form}, so existence of weak solutions to the heat equation is guaranteed under (UP) (cf.~Remark \ref{rem:lower bounded form}).

\textbf{Strong uniform parabolicity (SUP).} {\em
There exists a constant $\kappa = \kappa(Y) \ge 1$ such that
\begin{align} \label{eq:SUP}
-\frac{p-1}{2} \e_t(u, u^{p-1} \phi^2) 
 \le & \kappa \int u^p d\Gamma(\phi,\phi) 
        - \frac{1}{\kappa} \left(1-\frac{1}{p}\right)^2 \int \phi^2  d\Gamma(u^{p/2},u^{p/2}),
\end{align}
for all $p \in \R$, all non-negative $u \in \F_{\mbox{\tiny{loc}}}(Y)$ with $u+u^{-1} \in L^{\infty}(Y,\mu)$, and all $\phi \in \F_{\mbox{\tiny{c}}}(Y) \cap L^{\infty}(Y,\mu)$, for $Y=X$ or at least for sufficiently many open sets $Y \subset X$.
}

For simplicity, let us discuss here the case $Y=X$. We remark that Assumption \ref{as:e_t} and Assumption \ref{as:p=0} can be localized to sufficiently many open subsets $Y \subset X$ and still yield most of our main results  (see \cite{LierlPHIf}). We chose to avoid this level of generality in the present paper for the benefit of a clearer presentation.

Under Assumption 0 and Assumption \ref{as:e_t}, there exists a constant $\kappa \ge 1$ such that
\begin{align} \label{eq:our SUP}
\begin{split}
-\frac{p-1}{2} \e_t(u, u^{p-1} \phi^2) 
 \leq & \kappa \int u^p d\Gamma(\phi,\phi) 
        - \frac{1}{\kappa} \left( 1 - \frac{1}{p} \right)^2 \int \phi^2  d\Gamma(u^{p/2},u^{p/2}) \\
       & + \kappa (p^2+1) \int u^p \phi^2 d\mu,
 \end{split}
\end{align}
for all $0 \neq p \in \R$, $t \in \R$, all non-negative $u \in \F_{\mbox{\tiny{loc}}}(Y)$ with $u+u^{-1} \in L^{\infty}(Y,\mu)$, and all $\phi \in \F_{\mbox{\tiny{c}}}(Y) \cap L^{\infty}(Y,\mu)$.
This follows from Lemma \ref{lem:SUP sym2} and Proposition \ref{prop:skew identity with p}. Notice the restriction $p \ne 0$, we come back to this issue below by referring to Assumption \ref{as:p=0}. 

Estimate \eqref{eq:our SUP} is the same as \eqref{eq:SUP} except for the zero order term  $(p^2+1) \int u^p \phi^2 d\mu$. Without any zero order term, \eqref{eq:SUP} can hold only for purely second order forms $\e_t$. The bilinear form of Example \ref{ex:Euclidean} satisfies \eqref{eq:SUP} provided that $b_i = d_i = c = 0$ for all $i$.  
It was argued at the end of \cite[Subsection 2.2]{SturmII} that lower order terms (i.e. nonzero coefficients $b_i, d_i, c$) could be included in the consideration by replacing $\e_t$ by $\e_t \pm \alpha(\cdot,\cdot)$ for some constant $\alpha$. This is true for coefficients $b_i$ and $c$. A nonzero coefficient $d_i$ creates a zero order term with prefactor $(p^2+1)$ rather than the prefactor $\pm (p-1)$ claimed in \cite[p.~298]{SturmII}.

A less minor problem with assumption (SUP) is that it does not appear to be sufficient to obtain mean value estimates, except for weak solutions whose local boundedness is known a priori. 
Sturm \cite{SturmII} suggested to approximate weak solutions by bounded functions $(u_n)$, similar to the method in \cite{AS67}. 
A correct way of approximating $\e_t(u,u^{p-1} \psi^2)$ is not by $\e_t(u,u_n^{p-1} \psi^2)$ as claimed in \cite{SturmII} but by $\e_t(u,u u_n^{p-2} \psi^2)$ as, e.g., in the work of Aronson and Serrin \cite{AS67}.
 A technical difficulty then arises from the fact that $x \mapsto x (x \wedge n)^{p-2}$ is only a $\mathcal C^1$-function, while weak time-derivatives lack a $\mathcal C^1$-chain rule (see Proposition \ref{prop:time chain rule} for the $\mathcal C^2$-chain rule). Aronson-Serrin resolved this issue by a smoothing argument that involves convolution with some kernel. It is not outlined in \cite{SturmII} how this argument would translate to the metric space setting. The method of Steklov averages employed in the present paper is an alternative way to handle this difficulty.

\newpage

\subsection{Extensions of the bilinear forms $\l_*$ and $\r_*$}

\begin{proposition} \label{prop:extending L}
Suppose $(\e_*,\F)$ satisfies {\em Assumption 0(i)}, and for all $f,g \in \F \cap L^{\infty}(X)$ with $fg \in \F_{\mbox{\em \tiny c}}$, 
\begin{align} \label{eq:weak sector type condition}
 |\e_*(fg,1)| + |\e_*(1,fg)|\le C_* \|f\|_\F\|g\|_\F.
\end{align}
Then the maps $\mathcal K_l:(f,g)\mapsto \e_*(fg,1)$ and
$\mathcal K_r:(f,g)\mapsto \e_*(1,fg)$, extend continuously from $\big( \F\cap \mathcal C_{\mbox{\em \tiny{c}}}(X) \big) \times \big(\F\cap \mathcal C_{\mbox{\em \tiny{c}}}(X) \big)$ to $\F\times\F$, and satisfy 
\[ \mathcal K_l(f,g) \leq C_* ||f||_{\F} ||g||_{\F}, \quad \mathcal K_r(f,g) \leq C_* ||f||_{\F} ||g||_{\F},
\]
for all $f,g \in \F$. Moreover, for any $f,g \in \F \cap L^{\infty}(X,\mu)$  with $fg \in \F_{\mbox{\em{\tiny{c}}}}$,
\begin{align} \label{eq:K_l and K_r weak}
 \e_*(fg,1) = K_l(f,g), \quad \e_*(1,fg) = K_r(f,g).
 \end{align}

As a consequence, the bilinear forms $\l_*$ and $\r_*$ extend continuously to $\F\times\F$, and the extensions satisfy
\begin{align} \label{eq:extension of l}
\l_*(u,v) & = \frac{1}{4} \big[ \e_*(uv,1) - \e_*(1,uv) + 
\e_*(u,v) - \e_*(v,u) \big],\\
\r_*(u,v) & = \frac{1}{4} \big[ \e_*(1,uv) - \e_*(uv,1) + \e_*(u,v) - \e_*(v,u) \big]
= - \l(v,u), \nonumber
\end{align}
for any $u,v \in \F \cap L^{\infty}(X,\mu)$ with $uv \in \F_{\mbox{\em{\tiny{c}}}}$. 
If in addition $\l_*$ satisfies the Leibniz rule of Definition \ref{def:leibniz rule}(i), 
then, for any $u,v,f \in \F \cap L^{\infty}(X,\mu)$ with $uvf \in \F_{\mbox{\em \tiny{c}}}$, we have
\begin{align*}
 \l_*(uf,v) = \l_*(u,fv) + \l_*(f,uv),
 \end{align*}
and, for any $v \in \F_{\mbox{\em \tiny{c}}}$, any $u_1, u_2, \ldots, u_m \in \F \cap L^{\infty}(X,\mu)$ and $u = (u_1, \ldots, u_m)$, 
and for any $\Phi \in \mathcal C^2(\R^m)$, we have the chain rule
\begin{align*}
\l_*(\Phi(u),v) = \sum_{i=1}^{m} \l_*(u_i, \Phi_{x_i}(u) v).
\end{align*}
\end{proposition}
\begin{proof}
For $f,g \in \F$, consider sequences $(f_n)$, $(g_n)$ in $\F\cap \mathcal C_{\mbox{\tiny{c}}}(X)$ which converge in $\F$ to $f$ and $g$, respectively.
Using the fact that $f_n g_m \in \mathcal F_{\mbox{\tiny{c}}}$ and \eqref{eq:weak sector type condition}, we see that $\e_*(f_n g_n,1)$ is a Cauchy sequence. If $(\hat f_n),(\hat g_n) \subset \F\cap \mathcal C_{\mbox{\tiny{c}}}(X)$ are other sequences 
converging in $\F$ to $f$ and $g$, respectively, then, constructing another sequence $\check f_{2n}:=f_n$, $\check f_{2n+1}:=\hat f_n$ and analogously constructing $(\check g_n)$,
we see that the limit $\mathcal K_l(f,g)$ of the Cauchy sequence $\e_*(f_ng_n,1)$ is independent of the choice of the sequences $(f_n), (g_n) \subset \F\cap \mathcal C_{\mbox{\tiny{c}}}(X)$. This shows that $\mathcal K_l$ extends continuously to $\F \times \F$ and satisfies the sector type condition $\mathcal K_l(f,g) \leq C_* ||f||_{\F} ||g||_{\F}$ for all $f,g \in \F$.

Next, we show that $K_l(f,g) = \e_*(fg,1)$ for any $g\in \F\cap \mathcal C_{\mbox{\tiny{c}}}(X)$ and $f\in \F \cap L^\infty(X)$. Note that $fg\in \F_{\mbox{\tiny{c}}}$. Let $(f_n)$ be a sequence in $\F\cap \mathcal C_{\mbox{\tiny{c}}}(X)$ that converges to $f$ in $\F$. 
Then $f_n g \in \F_{\mbox{\tiny{c}}}$, $\e_*(f_ng,1)=\mathcal K_l(f_n,g)$, and $|\e_*((f-f_n)g,1)|\le C_*\|f-f_n\|_\F\|g\|_\F$ by \eqref{eq:weak sector type condition}. This proves that $\e_*(fg,1)=\mathcal K_l(f,g)$ in this case.

Assume know that $f,g$ are such that $f,g\in \F\cap L^\infty(X)$ 
with $fg\in \F_{\mbox{\tiny{c}}}$. Let $(g_n)$ be a sequence in $\F\cap \mathcal C_{\mbox{\tiny{c}}}(X)$ 
which converges to $g$ in $\mathcal F$. By the above argument, we obtain that $\e_*(f g_n,1)=\mathcal K_l(f,g_n)$, and,  by \eqref{eq:weak sector type condition}, $|\e_*(f(g-g_n),1)|\le C_*\|f\|_\F\|g-g_n\|_\F$. 
Letting $n \to \infty$, it follows that
$$\e_*(fg,1)=\mathcal K_l(f,g)$$ as desired.

The product rule and of the chain rule follow from Definition \ref{def:leibniz rule} by a simple approximation argument together with Assumption 0(i) and \eqref{eq:extension of l}.
\end{proof}

Under the stronger hypothesis \eqref{A2} we can relax the boundedness condition on $f,g$ in \eqref{eq:K_l and K_r weak}.
\begin{proposition} \label{pro-A2}
Suppose $(\e_*,\F)$ satisfies {\em Assumption 0(i)}.
Assume that
\begin{equation}\label{A2}
\forall\,f,g\in \mathcal F\mbox{ with } fg\in \F_{\mbox{\em \tiny{c}}}, \quad
|\e_*(fg,1)|+|\e_*(1,fg)|\le C_* \|f\|_\F \|g\|_\F.
\end{equation}
Then 
$$\mathcal K_l(f,g)= \e_*(fg,1) \mbox{ and } 
\mathcal K_r(f,g)=\e_*(1,fg)$$
under either one of the following conditions: 
\begin{itemize}
\item $f,g\in \F$ with 
$fg\in \mathcal F_{\mbox{\em \tiny{c}}}$, $f\ge 0$ and $g\ge 0$.
\item  for all $f \in \F$, $g\in \F\cap L^\infty(X,\mu)$, 
$fg\in \F_{\mbox{\em \tiny{c}}}$ and $g\ge 0$.
\end{itemize}
\end{proposition}
\begin{proof} Assume that $f\in \F$ and $g\in \F\cap L^\infty(X,\mu)$
with $fg\in \F_{\mbox{\tiny{c}}}$ and $g\ge 0$. Let $f_n:= (f\wedge n) \vee (-n)$. Observe 
that  $f_n=f +(f+n)^- - (f-n)^+$ and, consequently, 
$$f_ng=fg +(f+n)^-g-(f-n)^+g= fg +(fg+ng)^- -(fg-ng)^+.$$
 Since $0\le (f-n)^+g\le |fg|$, $0\le (f+n)^-g \le |fg|$ and $fg\in \F_c$, $g\in \F$, we have
$(fg+ng)^-,(fg-ng)^+\in \F_c$ and $f_n g \in \F_{\mbox{\tiny{c}}}$. 
By Proposition \ref{prop:extending L}, it follows that
$\e_*(f_ng,1)=\mathcal K_l (f_n,g)$
and, by (\ref{A2}), 
$$|\e_*((f_n-f)g,1)|\le \|f-f_n\|_\F \|g\|_\F.$$
Hence $\e_*(fg,1)= \mathcal K_l(f,g)$ in this case.

Next, assume that $f,g\in \mathcal F$ 
with $fg\in \F_{\mbox{\tiny{c}}}$ and  $f,g\ge 0$. 
Let $g_n:=g\wedge n$. Then since $f\ge 0$, $fg_n\in \mathcal F_{\mbox{\tiny{c}}}$, and we have
$\e_*(fg_n,1)=\mathcal K_l (f,g_n)$ by what was just proved.
Since
$$|\e_*(f(g-g_n),1)|\le \|f\|_\F \|g-g_n\|_\F$$ 
we conclude that $\e_*(fg,1)=\mathcal K_l (f,g)$.
\end{proof}

\subsection{Background on local weak solutions} \label{ssec:weak solutions}
\subsubsection{Weak time-derivative} \label{ssec:weak time-derivative}

The notion of a local weak solution to the heat equation in the context of metric measure Dirichlet spaces has undergone some ambiguity. There are different ways of defining local weak solutions and these are not exactly equivalent. See, e.g., \cite{SturmII, GHL09, BGK12, EldredgeSC14}. 

The notion of local weak solutions that we use in this paper is very close to the definition in \cite{EldredgeSC14} which is similar but slightly different from the definition in \cite{SturmII}. It is consistent with the abstract existence theory in, e.g., Lions and Magenes \cite{LM72}.

For an open relatively compact time interval $I$ and a separable Hilbert space $H$, let $L^2(I \to H)$ be the Hilbert space of those functions $v: I \to H$ such that 
 \[  \Vert v \Vert_{L^2(I \to H)} = \left( \int_I \Vert v(t) \Vert_H^2 \, dt \right)^{1/2} < \infty. \]

We say that a function $v \in L^2(I \to H)$ has a distributional time-derivative (also called weak time derivative) that can be represented by a function in $L^2(I \to H)$, if there exists $v' \in L^2(I \to H)$ such that for all smooth compactly supported functions $\phi:I \to H$ we have
\begin{align} \label{eq:weak time derivative}
 \int \left(\frac{\partial}{\partial t} \phi (t), v(t) \right)_{H} dt = - \int \left( v'(t) , \phi(t) \right)_{H} dt.
 \end{align}
From \eqref{eq:weak time derivative} it is easy to see that $\frac{1}{h} ( v(t+h) - v(t))$ converges to $v'(t)$ weakly in $L^2(I \to H)$ as $h \to 0$.

Let $W^1(I \to H) \subset L^2(I \to H)$ be the Hilbert space of those functions $v: I \to H$ in $L^2(I \to H)$ whose distributional time derivative $v'$ can be represented by functions in $L^2(I \to H)$, equipped with the norm
 \[  \Vert v \Vert_{W^1(I \to H)} = \left( \int_I \Vert v(t) \Vert_H^2 + \Vert v'(t) \Vert_H^2 \, dt \right)^{1/2} < \infty. \]
Cf.~\cite{LM72, RR93}.

Identifying $L^2(X,\mu)$ with its dual space and using the dense embeddings $\F \subset L^2(X,\mu) \subset \F'$, where $\F'$ is the dual space of $\F$, we set
 \[ \F(I \times X) = L^2(I \to \F) \cap W^1(I \to \F'), \]
which is a Hilbert space with norm $\Vert v \Vert_{\F(I \times X)} = \left( \int_I || v(t) ||^2_{\F} + || v'(t)||^2_{\F'} dt \right)^{1/2}$.
A function $v \in L^2(I \to \F)$ is in $\F(I \times X)$ if and only if there exists $v' \in L^2(I \to \F')$ such that for any smooth compactly supported function $\phi:I \to \F$ it holds
\[ \int_I \int_X v(t) \frac{d}{dt} \phi(t) d\mu \, dt = - \int_I \langle v'(t),\phi(t) \rangle_{\F',\F} dt. \]

It is well-known (and easy to see since $L^2(X,\mu)$ is separable) that $L^2(I \to L^2(X,\mu))$
can be identified with  $L^2(I \times X,dt \times d\mu)$. Indeed, continuous functions with compact support in $I \times X$ are dense in both spaces and the two norms coincide on these functions.

We recall the following fact from \cite[Lemma 11.4]{RR93},
\begin{align} \label{eq:C(I to L^2)}
 \F(I \times X) \subset \mathcal C(\bar I \to L^2(X,\mu)).
\end{align}
Therefore, a function $u \in \F(I \times X)$ can be considered as a continuous path $t \mapsto u(t,\cdot)$ in $L^2(X,\mu)$. 

Let  $U \subset X$ be open. Let
 \[ \F_{\mbox{\tiny{loc}}}(I \times U) \] 
be the set of all measurable functions $u:I \times U \to \R$ such that for any open interval $J$ relatively compact in $I$, 
and any open subset $A$ relatively compact in $U$, there exists a function $u^{\sharp} \in \F(I \times X)$ such that $u^{\sharp} = u$ a.e. in $J \times A$.
Let
\begin{align*}
 \F_{\mbox{\tiny{c}}}(I \times U)  = & \{ u \in \F(I \times X): u \mbox{ has compact support in } I \times U \}.
\end{align*}



Let $\mathcal C(\overline{I} \to \F)$ be the space of continuous functions from $I$ to $\F$.

Let $\mathcal C^{\infty}((-\infty,\infty) \to \F)$ be the space of smooth functions from $(-\infty,\infty)$ to $\F$. 
Let $\mathcal C^{\infty}(\overline{I} \to \F):= \mathcal C^{\infty}((-\infty,\infty) \to \F)\big|_{\overline I}$. The following lemma is proved in \cite[Lemma 25.1]{Wlo87en}.

\begin{lemma} \label{lem:wloka density}
$\mathcal C^{\infty}(\overline{I} \to \F)$ is dense in $\F(I \times X)$.
\end{lemma}

The next proposition provides a $C^2$-chain rule for weak time-derivatives. It is important to note that we do not have a $C^1$-chain rule. A similar result, namely a $C^2$-chain rule for a different notion of weak time-derivatives, is proved in \cite[Lemma 5.1]{GHL09}.

\begin{proposition}[Chain rule for the weak time-derivative]
\label{prop:time chain rule}
Let $u \in \F(I \times X)$. Let  $v \in \F_{\mbox{\em \tiny{c}}}(X)\cap L^{\infty}(X)$.
Let $\Phi \in \mathcal C^2(\R)$ with $\Phi(0)=0$, and $\Phi'$, $\Phi''$ bounded.
Then $\Phi'(u)v \in L^2(I \to \F)$, and
 at a.e. $a,b \in I$,
\begin{align} \label{eq:time chain rule}
\int_X \Phi(u(b,\cdot)) v \, d\mu - \int_X \Phi(u(a,\cdot)) v \, d\mu
 = 
\int_a^b \left\langle \frac{\partial}{\partial t} u, \Phi'(u) v \right\rangle_{\F',\F} dt.
\end{align}
\end{proposition}

\begin{proof}
By Lemma \ref{lem:wloka density}, there exists a sequence $(f_k) \subset \mathcal C^{\infty}(\overline{I} \to \F)$ so that $f_k \to u$ in $\F(I \times X)$. 
Since each $f_k$ has a strong time derivative $\frac{\partial}{\partial t} f_k$, it holds
\begin{align} \label{eq:time chain rule u_n 1}
\int_X \Phi(f_k(b,\cdot)) v \, d\mu - \int_X \Phi(f_k(a,\cdot)) v \, d\mu
& = 
\int_a^b \int_X \left( \frac{\partial}{\partial t} \Phi(f_k) \right) v \, d\mu \, dt \nonumber \\
& = 
\int_a^b \int_X \left( \frac{\partial}{\partial t} f_k \right) \Phi'(f_k) v \, d\mu \, dt.
\end{align}
Observe that  $\Phi'(f_k) v \in L^2(I \to \F)$. 
Identifying $L^2(X,\mu)$ with its dual space and using the embeddings $\F \subset L^2(X,\mu) \subset \F'$, we can rewrite \eqref{eq:time chain rule u_n 1} as
\begin{align}  \label{eq:time chain rule u_n 2}
\int_X \Phi(f_k(b,\cdot)) v \, d\mu - \int_X \Phi(f_k(a,\cdot)) v \, d\mu
& =
\int_a^b \left\langle \frac{\partial}{\partial t} f_k, \Phi'(f_k) v \right\rangle_{\F',\F} dt.
\end{align}

We will prove \eqref{eq:time chain rule} by letting $k \to \infty$ on both sides of \eqref{eq:time chain rule u_n 2}. 
We know that $f_k \to u$ in $L^2(I \to \F)$. 
Using Proposition \ref{prop:dt-qe convergence} and passing to a subsequence, we can assume that, for almost every $t \in I$,
$\widetilde{f_k}(t,\cdot) \to \widetilde{u}(t,\cdot)$ quasi-everywhere.
By the chain rule for $\Gamma$, we have $\Phi(u(t,\cdot)) \in \F$.
Due to the continuity of $\Phi$, $\Phi'$ and $\Phi''$, we obtain that for almost every $t \in I$,
\begin{align} \label{eq:pointwise Phi(u)}
 \Phi(\widetilde{f_k}(t,\cdot)) \to \Phi(\widetilde{u}(t,\cdot)) \quad \mbox{ quasi-everywhere},
\end{align}
$ \Phi'(\widetilde{f_k}(t,\cdot)) \to \Phi'(\widetilde{u}(t,\cdot))$ and  
$\Phi''(\widetilde{f_k}(t,\cdot)) \to \Phi''(\widetilde{u}(t,\cdot))$ quasi-everywhere.

Observe that $|\Phi(u) - \Phi(f_k)| \leq (\sup_{y \in \R} \Phi'(y)) |u- f_k|$.
Since $\Phi'$ is bounded and $f_k \to u$ in $L^2(I \to \F)$, we get that at a.e. $t \in I$,
\begin{align} \label{eq:Phi(u) 1} 
\int_X |\Phi(u(t,\cdot))v - \Phi(f_k(t,\cdot))v| d\mu \to 0
\quad \mbox { as } k \to \infty.
\end{align}
Hence, for a.e. $a,b \in I$, the left hand side of \eqref{eq:time chain rule u_n 2} converges to the left hand side of \eqref{eq:time chain rule} as $k \to \infty$.

Next, we consider the right hand side of \eqref{eq:time chain rule u_n 2}. 
By \eqref{eq:pointwise Phi(u)} and because $\Phi'$, $\Phi''$ and $v$ are bounded, and $f_k \to u$ in  $L^2(I \to \F)$,
\begin{align*}
& \quad \int_I \int d\Gamma( \Phi'(f_k) v - \Phi'(u) v, \Phi'(f_k) v - \Phi'(u) v ) dt \\
& \leq 
 2  \int_I \int (\Phi'(f_k) - \Phi'(u))^2 d\Gamma(v,v) dt + 2 \int_I \int \Phi''(f_k)^2 v^2 d\Gamma ( f_k - u, f_k - u) dt \\
& \quad + 2 \int_I \int ( \Phi''(u)^2 v^2 - \Phi''(f_k)^2 v^2) d\Gamma(u,u) dt  \\
& \quad + 4 \int_I \int (\Phi''(f_k)v - \Phi''(u) v) \Phi''(f_k)v d\Gamma(u,f_k-u) dt \\
& \quad + 2 \int_I \int (\Phi''(f_k)v - \Phi''(u) v) \Phi''(f_k)v d\Gamma(u,u) dt \\
& \longrightarrow 0, \quad \mbox{ as } k \to \infty.
\end{align*}
It follows that $(\Phi'(f_k) v - \Phi'(u) v) \to 0$ in $L^2(I \to \F)$.
Since $f_k \to u$ in $\F(I \times X)$, we also have that $\frac{\partial}{\partial t} f_k \to \frac{\partial}{\partial t} u$ in $L^2(I \to \F')$.
Hence,
\begin{align*}
& \int_I \left\langle \frac{\partial}{\partial t} u, \Phi'(u) v \right\rangle_{\F',\F} dt - \int_I \left\langle \frac{\partial}{\partial t} f_k, \Phi'(f_k) v \right\rangle_{\F',\F} dt  \\
& = 
 \int_I \left\langle \frac{\partial}{\partial t} u - \frac{\partial}{\partial t} f_k, \Phi'(u) v \right\rangle_{\F',\F} dt 
+ \int_I \left\langle \frac{\partial}{\partial t} f_k, \Phi'(u)v - \Phi'(f_k) v \right\rangle_{\F',\F} dt \\
& \longrightarrow 0 \quad \mbox{ as } k \to \infty.
\end{align*}
\end{proof}

\begin{corollary}
\label{cor:time int by parts simple version}
Let $u \in \F_{\mbox{\em \tiny{loc}}}(I \times X)$ and $v \in \F_{\mbox{\em \tiny{c}}}(X)$.
Then at a.e. $a,b \in I$,
\begin{align*}
 \int u(b,\cdot)  v \, d\mu - \int u(a,\cdot)  v \, d\mu  = 
\int_a^b \left\langle \frac{\partial}{\partial t} u(t,\cdot), v \right\rangle_{\F',\F} dt.
\end{align*}
\end{corollary}
\begin{proof}
If $v$ is bounded, then the assertion follows from  Proposition \ref{prop:time chain rule} applied with $\Phi(x) = x$. 
If $v$ is unbounded, then approximate $v$ by $v_m := v \wedge m$.
\end{proof}

The next corollary collects some properties of weak time-derivatives, including the product rule. 

\begin{corollary} \label{cor:some properties of weak time-derivative}
\begin{enumerate}
 \item 
Let $u \in \F_{\mbox{\em \tiny{loc}}}(I \times X)$ and $v \in \F_{\mbox{\em \tiny{c}}}(X) \cap L^{\infty}(X)$. 
Let $\Phi \in \mathcal C^2(\R)$ with $\Phi(0)=0$, and $\Phi'$, $\Phi''$ bounded. Let $\chi \in \mathcal C^1(I)$ with $\chi=0$ in a 
neighborhood of $a \in I$. Then we have $\chi \Phi'(u)v \in L^2(I \to \F)$, and for a.e. $b \in I$, 
\begin{align*}
& \int \chi(b) \Phi(u(b,\cdot))  v \, d\mu \\
& = 
\int_a^b \left\langle \frac{\partial}{\partial t} u(t,\cdot), \chi(t)\Phi'(u(t,\cdot)) v \right\rangle_{\F',\F} dt
+ \int_a^b \chi'(t) \Phi(u(t,\cdot)) v d\mu \, dt.
\end{align*}
\item
Let $u,\phi \in \F_{\mbox{\em \tiny{loc}}}(I \times X) \cap L^{\infty}(I \times X)$ and $v \in \F_{\mbox{\em \tiny{c}}}(X) \cap L^{\infty}(X)$.
Then at a.e. $a,b \in I$,
\begin{align*}
& \int (u\phi)(b,\cdot))  v \, d\mu - \int (u\phi)(a,\cdot)  v \, d\mu \\
& = 
\int_a^b \left\langle \frac{\partial}{\partial t} u(t,\cdot), \phi(b,\cdot)) v \right\rangle_{\F',\F} dt
+ \int_a^b \left\langle \frac{\partial}{\partial t} \phi(t,\cdot), u(a,\cdot)) v \right\rangle_{\F',\F} dt.
\end{align*}
\end{enumerate}
\end{corollary}

\begin{proof}
(i) follows by repeating the proof of Proposition \ref{prop:time chain rule} with the obvious changes to account for the presence of the function $\chi$.
(ii) If $u = \phi$, then the assertion follows from  Proposition \ref{prop:time chain rule} applied with some function $\Phi \in C^2(\R)$ that satisfies $\Phi(x) = x^2$ for all $|x| \leq \sup_t \sup_{y \in \mbox{supp}(v)} u(t,y)$. 
The general case then follows by polarization.
\end{proof}

\subsubsection{Local weak solutions}

For every $t \in \R$, let $(\e_t,\F)$ be a (possibly non-symmetric) local 
bilinear form. We assume further that for every 
$f,g \in \F$ the map $t \mapsto \e_t(f,g)$ is measurable and that, 
for each $t$, $\e_t$ satisfies the structural hypotheses
introduced in Assumption 0. 

\begin{definition} \label{def:local weak solution}
Let $I$ be an open interval and $U \subset X$ open. Set $Q = I \times U$. A function $u: Q \to \R$ is a \emph{local weak solution} of the heat equation $\frac{\partial}{\partial t} u = L_t u$ in $Q$, if
\begin{enumerate}
\item
$u \in \F_{\mbox{\emph{\tiny{loc}}}}(Q)$,
\item 
\begin{align*} 
 \forall \phi \in \F_{\mbox{\emph{\tiny{c}}}}(Q), \quad \int_I \left\langle \frac{\partial}{\partial t} u , \phi \right\rangle_{\F',\F} dt + \int_I \e_t(u(t,\cdot),\phi(t,\cdot)) dt = 0.
\end{align*} 
\end{enumerate} 
\end{definition}

\begin{remark}
For $u \in \F_{\mbox{\tiny{loc}}}(Q)$, 
$\int_I \left\langle \frac{\partial}{\partial t} u, \phi \right\rangle_{\F',\F} dt$ should be read as $\int_I \left\langle \frac{\partial}{\partial t} u^{\sharp} , \phi \right\rangle_{\F',\F} dt$, where $u^{\sharp} \in \F(I \times X)$ with $u=u^{\sharp}$ a.e.~on $\mbox{supp}(\phi)$.
\end{remark}

\begin{proposition} \label{prop:weak and very weak}
The following are equivalent:
\begin{enumerate}
\item
$u$ is a local weak solution of $\frac{\partial}{\partial t} u = L_t u$ in $Q = I \times U$ (in the sense of Definition \ref{def:local weak solution}).
\item
$u$ is a local very weak solution of $\frac{\partial}{\partial t} u = L_t u$ in $Q = I \times U$ (in the sense of Definition \ref{def:local very weak solution}) and, for every $v \in \F_{\mbox{\em \tiny{c}}}(U)$,
 the function $b \mapsto \int  u(b)  v \, d\mu$ is continuous on $I$. 
\end{enumerate}
\end{proposition}

\begin{proof}
The forward implication is immediate from \eqref{eq:C(I to L^2)} and Corollary \ref{cor:time int by parts simple version}. 
We prove the converse.
Let $u$ be a very weak solution. By the continuity assumption in (ii),
\[ \int [ u(b) - u(a) ] v \, d\mu +  \int_a^b \e_t(u(t),v) dt = 0, \quad \forall v \in \F_{\mbox{\tiny{c}}}(U), \]
holds for all $a,b \in I$.
It is clear that $b \mapsto \int_a^b \e_t(u(t),v) dt$ is differentiable on $I$ with derivative $\e_b(u(b),v)$. 
Hence, $b \mapsto \int u(b) v \, d\mu$ is differentiable in $I$, and 
\[ \left[\frac{d}{db} \int u(b) v \, d\mu \right] +  \e_b(u(b),v) =  0. \]
In particular, for any smooth compactly supported function $\chi:I \to \R$, we get
\[ \int_I \left[ \frac{d}{dt} \int u(t) v  d\mu \right] \chi(t) \,  dt + \int_I  \e_t(u(t),v \chi(t))  dt =  0, 
\quad \forall v \in \F_{\mbox{\tiny{c}}}(U). \]
Integration by parts 
yields that
\[ - \int_I \left\langle \frac{\partial}{\partial t} \phi(t), u(t)  \right\rangle_{\F',\F} dt + \int_I  \e_t(u(t),\phi(t))  dt = 0, \]
for all $\phi(t) = \chi(t) v$, where $\chi:I \to \R$ is smooth with compact support in $I$, and $v \in \F_{\mbox{\tiny{c}}}(U)$.
Applying \cite[Lemma 25.1]{Wlo87en}, this extends to all functions  $\phi \in \F_{\mbox{\tiny{c}}}(I \times U)$.
By \cite[Lemma 1]{EldredgeSC14}, this proves that $u$ is a local weak solution.
\end{proof}

\def\cprime{$'$} \def\cprime{$'$}


\end{document}